\algrenewcommand\algorithmicrequire{\textbf{Input:}}
\algrenewcommand\algorithmicensure{\textbf{Output:}}
\newtheorem{theorem}{Theorem}[section]
\newtheorem{lemma}[theorem]{Lemma} 
\newtheorem{corollary}[theorem]{Corollary}
\newtheorem{proposition}[theorem]{Proposition}
\newtheorem{example}[theorem]{Example}
\newtheorem{definition}[theorem]{Definition}
\newtheorem{observation}[theorem]{Observation}
\newtheorem*{conjecture}{Conjecture}
\definecolor{petrol}{RGB}{0, 95, 105}
\newcommand{\mh}[1]{\begingroup #1\endgroup}
\newcommand{\NEW}[1]{\begingroup#1\endgroup}
\def\moverlay{\mathpalette\mov@rlay}
\def\mov@rlay#1#2{\leavevmode\vtop{%
   \baselineskip\z@skip \lineskiplimit-\maxdimen
   \ialign{\hfil$\m@th#1##$\hfil\cr#2\crcr}}}
\newcommand{\charfusion}[3][\mathord]{
    #1{\ifx#1\mathop\vphantom{#2}\fi
        \mathpalette\mov@rlay{#2\cr#3}
      }
    \ifx#1\mathop\expandafter\displaylimits\fi}
\newcommand{\cupdot}{\charfusion[\mathbin]{\cup}{\cdot}}
\DeclareMathOperator{\join}{\otimes}
\DeclareMathOperator{\union}{\cupdot}
\newcommand{\PrimeCat}{\ensuremath{\vcenter{\hbox{\includegraphics[scale=0.01]{./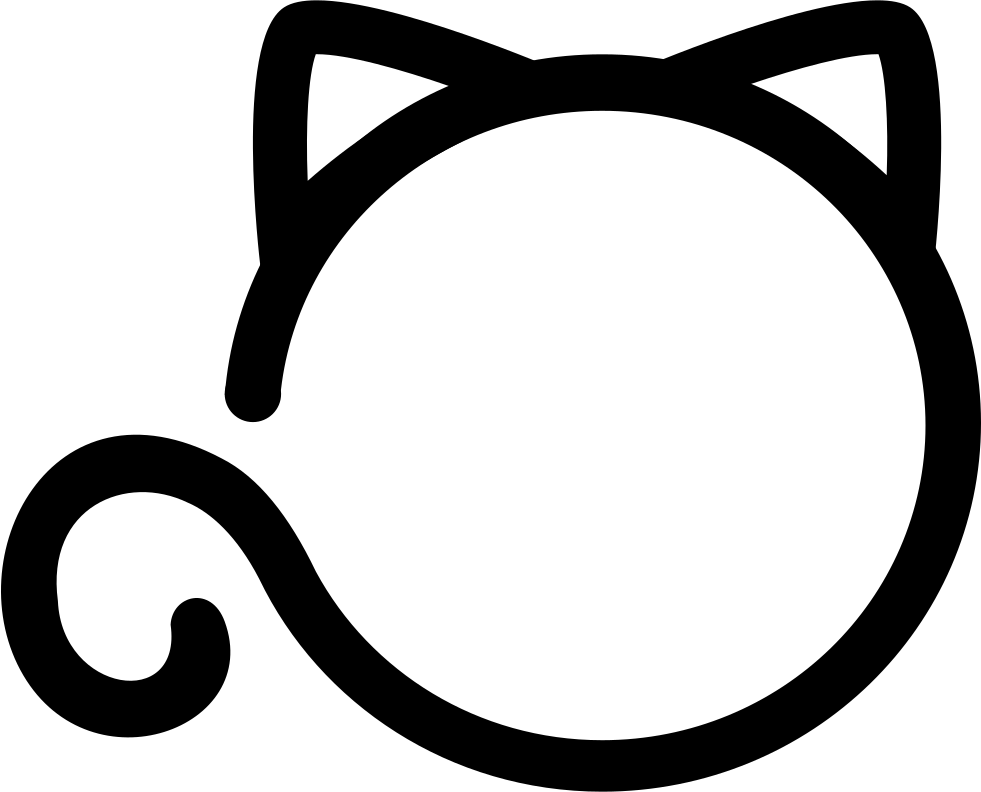}}}^{\textit{prime}}}}
\newcommand{\PolarCat}{\ensuremath{\vcenter{\hbox{\includegraphics[scale=0.01]{./cat.png}}}}}
\DeclareMathOperator{\child}{child}
\newcommand{\lca}{\ensuremath{\operatorname{lca}}}
\newcommand{\parent}{\ensuremath{\operatorname{par}}}
\newcommand{\gatex}{\textsc{GaTEx}\xspace}
\newcommand{\tww}{\ensuremath{\mathrm{tww}}}
\newcommand{\symdif}{\ensuremath{\mathop \triangle}}
\newcommand{\MD}{\ensuremath{\mathbb{M}}}
\newcommand{\MDstrong}{\ensuremath{\mathbb{M}_{\mathrm{str}}}}
\newcommand{\Mmax}{\ensuremath{\mathbb{M}_{\max}}}
\newcommand{\MDT}{\ensuremath{\mathscr{T}}}
\newcommand{\forbGT}{\ensuremath{\mathfrak{F}_{\mathrm{GT}}}}
\algrenewcommand\algorithmicrequire{\textbf{Input:}}
\algrenewcommand\algorithmicensure{\textbf{Output:}}
\newcommand{\omegaExclHyb}{\ensuremath{\omega_{\neg\eta}}}
\providecommand{\keywords}[1]{\textbf{\textit{Keywords: }} #1}
\title{Resolving Prime Modules: The Structure of Pseudo-cographs and Galled-Tree Explainable Graphs}
\author[1,*]{Marc Hellmuth} 
\author[2]{Guillaume E. Scholz} 
\affil[1]{Department of Mathematics, Faculty of Science,
  Stockholm University, SE-10691 Stockholm, Sweden 
  \newline \texttt{marc.hellmuth@math.su.se}}
\affil[2]{Bioinformatics Group, Department of Computer Science \&
    Interdisciplinary Center for Bioinformatics, Universit{\"a}t Leipzig,
    H{\"a}rtelstra{\ss}e~16--18, D-04107 Leipzig, Germany.}
\affil[*]{corresponding author}
\date{\ }
\begin{document}
\sloppy

\maketitle

\abstract{ 
The modular decomposition of a graph $G$ is a natural construction to capture
key features of $G$ in terms of a labeled tree $(T,t)$ whose vertices are
labeled as ``series'' ($1$), ``parallel'' ($0$) or ``prime''. However, full
information of $G$ is provided by its modular decomposition tree $(T,t)$ only,
if $G$ is a cograph, i.e., $G$ does not contain prime modules. In this case,
$(T,t)$ explains $G$, i.e., $\{x,y\}\in E(G)$ if and only if the lowest common
ancestor $\mathrm{lca}_T(x,y)$ of $x$ and $y$ has label ``$1$''.
Pseudo-cographs, or, more general, \textsc{GaTEx} graphs $G$ are graphs that can
be explained by labeled galled-trees, i.e., labeled networks $(N,t)$ that are
obtained from the modular decomposition tree $(T,t)$ of $G$ by replacing the
prime vertices in $T$ by simple labeled cycles. \textsc{GaTEx} graphs can be
recognized and labeled galled-trees that explain these graphs can be constructed
in linear time.

In this contribution, we provide a novel characterization of \textsc{GaTEx}
graphs in terms of a set $\forbGT$ of 25 forbidden induced subgraphs. This
characterization, in turn, allows us to show that \gatex graphs are closely
related to many other well-known graph classes such as $P_4$-sparse and $P_4$-reducible
graphs, weakly-chordal graphs, perfect graphs with perfect order, comparability
and permutation graphs, murky graphs as well as interval graphs, Meyniel graphs
or very strongly-perfect and brittle graphs. \NEW{Moreover, we show that every
\gatex graph as twin-width at most 1} and provide linear-time algorithms 
to solve several NP-hard problems (clique, coloring, independent set)
on \gatex graphs by utilizing the structure of the underlying galled-trees
they explain.
}

\smallskip
\noindent
\keywords{gatex graph, pseudo-cograph, galled-tree, graph classes, forbidden subgraphs, \NEW{twin-width}}

\section{Introduction}

Cographs can be generated from the single-vertex graph $K_1$ by complementation
and disjoint union \cite{Corneil:81,Sumner74,Seinsche:74}. This definition
associates a vertex labeled tree, the cotree, with each cograph, where a vertex
label ``0'' denotes a disjoint union, while ``1'' indicates a complementation.
Each cograph $G$ is \emph{explained} by its cotree $(T,t)$, i.e., $\{x,y\}$ is
an edge in $G$ precisely if the lowest common ancestor $\lca_T(x,y)$ of $x$ and
$y$ has label $t(\lca_T(x,y))=1$. Cographs are exactly the graphs containing no
induced $P_4$ (a chordless path with four vertices) and many NP-complete
problems become polynomial-time solvable, when the input is a cograph
\cite{Corneil:81,Brandstadt1999,GAO20132763}.

Cotrees are a special case of the much more general modular decomposition tree
(MDT) \cite{HP:10,CS:99,DGC:01,EHMS:94}. A \emph{module} $M$ in a graph $G$ is a
subset of the vertices of $G$ such that every $z\in V(G)\setminus M$ is either
adjacent to all or none of the vertices in $M$. A prime module $M$ is a module
that is characterized by the property that both, the induced subgraph $G[M]$ and
its complement $\overline{G[M]}$, are connected subgraphs of $G$. Cographs are
characterized by the absence of prime modules. Hence, for non-cographs, the MDT
has, in addition to the labels ``$0$'' and ``$1$'', vertices that are labeled as
$\mathit{prime}$. As a consequence, not all structural information of
non-cographs $G$ is provided by its MDT, i.e., $G$ cannot be recovered just by
the information of its MDT alone. It is natural to ask, therefore, whether the
MDT can be modified in a such a way that vertices $v$ with label
$\mathit{prime}$ are resolved by relabeling the vertices of the MDT or by
replacing $v$ by other labeled graphs to obtain a labeled tree or network that
explains $G$. The latter idea has been, in particular, fruitful for graphs with
a ``bounded number'' of $P_4$s \cite{JO:91,JO:89,JO:92}, cumulating in a
framework for trees with four labels to explain general graphs
\cite{JO:93,JO:95}. 

An independent motivation to manipulate the MDT of a given graph recently arose
in biology, more precisely in molecular phylogenetics
\cite{HSW:16,LDEM:16,Hellmuth:15a,Geiss:18a,Lafond2014,HHH+13,geiss2020best}. In
particular, orthology, a key concept in evolutionary biology and phylogenetics,
is intimately tied to cographs \cite{HHH+13}. Two genes in a pair of related
species are said to be orthologous if their last common ancestor was a
speciation event. The orthology relation on a set of genes forms a cograph
\cite{HHH+13,HW:16}. This relation can be estimated directly from biological
sequence data, albeit in a necessarily noisy form. In a simple evolutionary
scenario, such estimates are biologically feasible if one can find a tree that
supports these estimates, i.e., the inferred relations can be explained by a
0/1-labeled tree \cite{HW:16,Hellmuth:17,LDEM:16,LDEM:16}. However, in practice,
such estimates can often not be explained by any such tree since they violate
the cograph property. There are two main reasons why such estimates are not
cographs: (1) Noise in the data and measurement errors or (2) the history of the
underlying genes is simply not a tree but a phylogenetic network. In fact,
phylogenetic networks are a common framework to accomodate further events such
as horizontal gene transfer or hybridization. Assuming Item (2) to be one of the
main source of error, it makes sense, therefore, to ask if there is a
0/1-labeled phylogenetic network that can explain estimates of homology
relations. A first attempt to answer this question is provided in
\cite{HS18}, where the authors assumed to have, in addition to the orthology
relation, information about the ``behavior'' of 3-elementary subsets of genes.

In a recent work \cite{HS:22}, graphs $G$ have been characterized that can
    be explained by 0/1-labeled networks that are obtained from the MDT of $G$
    by replacing the prime vertices by a simple 0/1-labeled cycle which leads to
    the concept of pseudo-cographs and, more general, galled-tree explainable
    (\gatex) graphs. It has been shown, that such graphs can be recognized and
    0/1-labeled networks to explains these graphs can be constructed in linear
    time. In this contribution, we show that \gatex graphs are characterized by
    the absence of 25 induced forbidden subgraphs. This, in turn, allows us to
    show the rich connection of \gatex graphs to other graph classes. While
    $P_4$-reducible graphs form a proper subclass of \gatex graphs, all \gatex
    graphs are weakly-chordal, perfect graphs with perfect order, comparability
    and permutation graphs, as well as murky. Further connections to
    distance-hereditary graphs, interval and circular-arc graphs as well as
    Meyniel, very strongly-perfect and brittle graphs are established.

\begin{figure}[t]
	\begin{center}
			\includegraphics[width=0.75\textwidth]{./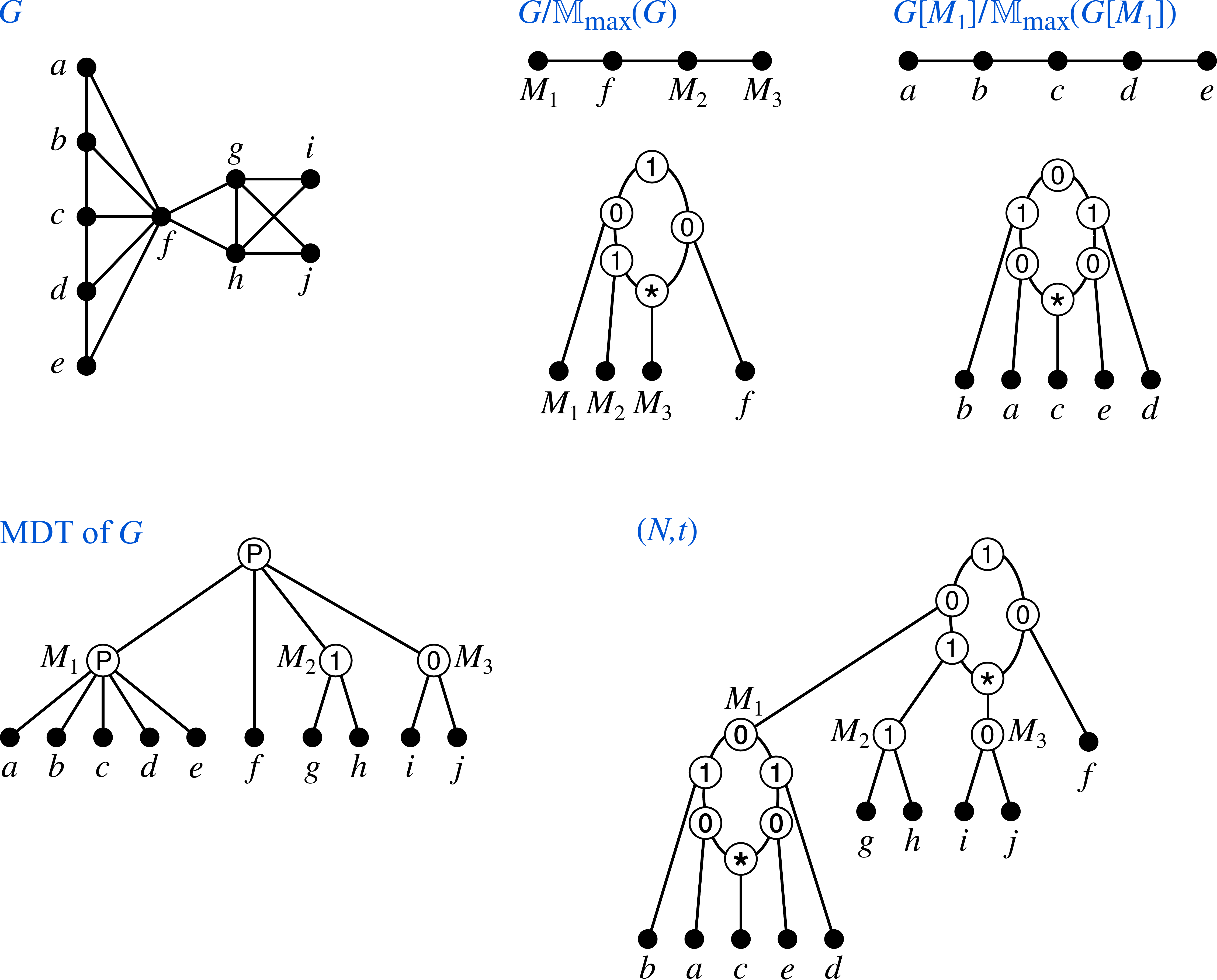}
	\end{center}
	\caption{
		Shown is a galled-tree explainable (\gatex) graph $G$. The modular
		decomposition tree (MDT) of $G$ shows that $G$ contains two prime
		modules, namely $V(G)$ and $M_1 = \{a,b,c,d,e\}$. To keep the drawing simple, we have
		written all singleton modules $\{x\}$ just by $x$. The
		respective quotient graphs $H\coloneqq G/\Mmax(G)$ and $H'\coloneqq
		G[M_1]/\Mmax(G[M_1])$ are shown in the upper part together with the
		respective galled-trees that explain them (directly below the
		respective graphs). One easily verifies that the 0/1-labeled galled-tree
		$(N,t)$ that explains $G$ is obtained from the MDT of $G$, by locally
		replacing all vertices in the MDT with label ``$\mathit{prime}$ (P)'' by
		the respective cycles of the networks explaining $H$ and $H'$ and by
		rewiring the edges accordingly. \newline
		Note that $H$ and $H'$ are polar-cats and thus,
		primitive pseudo-cographs. In particular, $H$ is a
		$(M_3,H[M_1,M_2,M_3],H[f,M_3])$-pseudo-cograph while $H'$ is a
		$(c,G[\{a,b,c\}],G[\{c,d,e\}])$ pseudo-cograph. Moreover, $H'$ is
		well-proportioned, while $H$ is not. Hence, there are further ways to
		write $H$ as a $(v,H_1,H_2)$-pseudo-cograph. Consequently, there are further
		galled-trees that explain $H$ (cf.\ Prop.\ 6.13 and Fig.\ 8 in
		\cite{HS:22}). 
			 }
\label{fig:gatex}
\end{figure}

\section{Preliminaries}
\label{sec:prelim}

\paragraph{\bf Graphs}

We consider graphs $G=(V,E)$ with vertex set $V(G)\coloneqq V\neq \emptyset$ and
edge set $E(G)\coloneqq E$. A graph $G$ is \emph{undirected} if $E$ is a subset
of the set of two-element subsets of $V$ and $G$ is \emph{directed} if
$E\subseteq V\times V$. Thus, edges $e\in E$ in an undirected graph $G$ are of
the form $e=\{x,y\}$ and in directed graphs of the form $e=(x,y)$ with $x,y\in
V$ being distinct. An undirected graph is \emph{connected} if, for every two
vertices $u,v\in V$, there is a path connecting $u$ and $v$. A directed graph is
\emph{connected} if its underlying undirected graph is connected. We write $H \subseteq G$ if $H$ is a
subgraph of $G$ and $G[W]$ for the subgraph in $G$ that is induced by some
subset $W \subseteq V$. If $H$ is an induced subgraph of $G$, we also write
$H\sqsubseteq G$. Moreover $G-v$ denotes the induced subgraph $G[V\setminus
\{v\}]$. In addition, if $H\sqsubseteq G$ and $x\in V(G)$, we define $H+x
\coloneqq G[V(H)\cup \{x\}]$. 
\NEW{Let $G$ be a directed or undirected graph. Then, $G$ 
is \emph{biconnected} if it contains no vertex
whose removal disconnects $G$. A \emph{biconnected component} of a $G$
is a maximal biconnected subgraph. If such a biconnected component is not a
single vertex or an edge, then it is called \emph{non-trivial}.}

\smallskip

\noindent
\emph{From here on, we will call an undirected graph simply \emph{graph}.}\smallskip

Complete graphs $G=(V,E)$ are denoted by $K_{|V|}$, a path with $n$ vertices by
$P_n$ and a cycle with $n$ vertices by $C_n$. We sometimes write, for
simplicity, $x_1-x_2-\cdots - x_{n}$ for a path $P_n$ with vertices
$x_1,\dots,x_n$ and edges $\{x_i,x_{i+1}\}$, $1\leq i<n$. The join of two
vertex-disjoint graphs $H=(V,E)$ and $H'=(V',E')$ is defined by $H \join
H'\coloneqq (V\cup V',E\cup E' \cup \{\{x,y\}\mid x\in V, y\in V'\})$, whereas
their disjoint union is given by $H \union H'\coloneqq (V \cup V',E \cup E')$.

For later reference, we provide the following simple result. Although we assume that it 
is folklore, we provide a short proof for completeness.

\begin{lemma}\label{lem:heri-forb}
	Let $\Pi$ be a graph property that is closed under isomorphism. 
	Then, the following statements are equivalent.
	\begin{enumerate}
	\item  $\Pi$ is heritable, i.e., $G$ satisfies $\Pi$ if and only if every induced
			subgraph of $G$ satisfied $\Pi$. 
	\item  The class of graphs satisfying $\Pi$ can be characterized in terms of a
			set	$\mathfrak F$ of forbidden subgraphs, i.e., 
			$G$ satisfies $\Pi$ if and only if $G$ does not contain an induced 
			subgraph $H\simeq F\in \mathfrak F$.
	\end{enumerate}
\end{lemma}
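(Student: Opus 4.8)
The plan is to prove the two implications separately, exploiting that the induced-subgraph relation $\sqsubseteq$ is both reflexive ($G\sqsubseteq G$) and transitive, together with the hypothesis that $\Pi$ is closed under isomorphism.

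For $(1)\Rightarrow(2)$, the natural candidate is to let $\mathfrak F$ be the collection consisting of one representative from each isomorphism class of graphs that do \emph{not} satisfy $\Pi$. I would then verify both directions of the claimed characterization. If $G$ satisfies $\Pi$, heritability gives that every induced subgraph $H\sqsubseteq G$ satisfies $\Pi$; since $\Pi$ is isomorphism-invariant, no such $H$ can be isomorphic to a member $F\in\mathfrak F$, because $F$, and hence $H$, would then fail $\Pi$. Thus $G$ contains no forbidden induced subgraph. Conversely, if $G$ contains no induced subgraph isomorphic to a member of $\mathfrak F$, then in particular $G\sqsubseteq G$ is not isomorphic to any such member; were $G$ to fail $\Pi$, however, it would by construction be isomorphic to some element of $\mathfrak F$, a contradiction. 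Hence $G$ satisfies $\Pi$.

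For $(2)\Rightarrow(1)$, suppose such a set $\mathfrak F$ exists. One direction of heritability is immediate: since $G\sqsubseteq G$, if every induced subgraph of $G$ satisfies $\Pi$ then in particular $G$ does. For the converse, assume $G$ satisfies $\Pi$ and take any $H\sqsubseteq G$; I would argue by contradiction. If $H$ failed $\Pi$, then by the characterization $H$ would contain an induced subgraph $H'\simeq F$ for some $F\in\mathfrak F$; transitivity of $\sqsubseteq$ yields $H'\sqsubseteq G$, so $G$ itself would contain a forbidden induced subgraph, contradicting that $G$ satisfies $\Pi$. Thus $H$ satisfies $\Pi$, completing the argument.

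Since this is a folklore statement, I do not anticipate any genuine obstacle; the only points requiring care are bookkeeping ones — choosing $\mathfrak F$ as a set of isomorphism-class representatives so that it is honestly a \emph{set}, and repeatedly invoking the isomorphism-closure of $\Pi$ together with the reflexivity and transitivity of $\sqsubseteq$. These are precisely the places where the hypotheses "closed under isomorphism" and the stated definition of heritability enter.
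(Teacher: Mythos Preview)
Your proof is correct and follows essentially the same approach as the paper: for $(1)\Rightarrow(2)$ take $\mathfrak F$ to be the graphs not satisfying $\Pi$, and for $(2)\Rightarrow(1)$ use transitivity of $\sqsubseteq$. Your version is in fact more careful than the paper's, which leaves the verification of the characterization in $(1)\Rightarrow(2)$ and the trivial direction of heritability in $(2)\Rightarrow(1)$ entirely to the reader.
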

\begin{proof}
	Suppose first that $\Pi$ is heritable. Now put $\mathfrak F$ as the set 
	of all graphs that do not satisfy $\Pi$. 
	Conversely, suppose that there is a set $\mathfrak F$ of forbidden subgraphs
	that characterizes graphs that  satisfy property $\Pi$. Since $G$ does not contain
	any graph in $\mathfrak F$ as an induced subgraph, none of the induced subgraphs
	of $G$ can contain a graph in $\mathfrak F$ as an induced subgraph. 
	Consequently, $\Pi$ is heritable. 
\end{proof}

Although for every hereditable property $\Pi$ there is a set $\mathfrak F$ of forbidden subgraphs
that characterizes $\Pi$, this set  $\mathfrak F$ is not necessarily finite.
For example, the property ``$G$ is acyclic'' is heritable, but any
set of forbidden subgraphs characterizing this property is 
of infinite size as it contains all cycles $C_n$, $n\geq 3$.

A graph is \emph{$\mathfrak F$-free}, if it does not contain any of the graphs
in $\mathfrak F$ as an induced subgraph. If $G$ is $\mathfrak F$-free and
$\mathfrak F = \{F\}$ consists of just a single graph, we also say that $G$ is
\emph{$F$-free}.

\paragraph{\bf Trees and Galled-trees}

(Phylogenetic) trees and galled-trees
are particular directed acyclic graphs (DAGs).  
To be more precise, a {\em galled-tree} $N=(V,E)$ on $X$ is a
such that either 
\begin{enumerate}
	\item[(N0)] $V=X = \{x\}$ and, thus, $E=\emptyset$.
\end{enumerate}
or $N$ satisfies  the following three properties 
\begin{enumerate}[noitemsep]
	\item[(N1)] There is a unique \emph{root} $\rho_N$ with indegree 0 and outdegree at least 2; and\smallskip
	\item[(N2)] $x\in X$ if and only if $x$ has outdegree 0 and indegree 1; and \smallskip
	\item[(N3)] Every vertex $v\in V^0 \coloneqq V \setminus X$ with $v\neq \rho_N$ has 
		\begin{enumerate}[noitemsep]
			\item indegree 1 and 	outdegree at least 2 (\emph{tree-vertex}) or
			\item indegree 2 and 	outdegree at least 1 (\emph{hybrid-vertex}).
		\end{enumerate}	
	\NEW{\item[(N4)] Each biconnected component $C$ contains at most one hybrid-vertex $v$ 
					for which the two vertices $v_1,v_2$ with $(v_1,v), (v_2,v)\in E$
					belong to $C$.  }		
\end{enumerate}
\NEW{We note that in \cite{HS:22}, galled-trees have been called level-1 networks.}
If a galled-tree $N=(V,E)$ does not contain hybrid-vertices, then $N$ is a
\emph{tree}. The set $L(N)=X$ is the \emph{leaf set} of $N$ and $V^0$ the set of
\emph{inner} vertices. 
By definition,
every non-trivial biconnected component in a galled-tree $N$ forms an ``undirected''
\emph{cycle} in $N$ \cite{HS18,CRV07} and is also known as \emph{block} \cite{GBP12}
or \emph{gall} \cite{gambette2017challenge}. Hence, a cycle $C$ of a galled-tree
$N$ is composed of two directed paths $P^1$ and $P^2$ in $N$ with the same
start-vertex $\rho_C$ and end-vertex $\eta_C$, and whose vertices distinct from
$\rho_C$ and $\eta_C$ are pairwise distinct. A cycle $C\subseteq N$ is
\emph{weak} if either (i) $P^1$ or $P^2$ consist of $\rho_C$ and $\eta_C$ only
or (ii) both $P^1$ and $P^2$ contain only one vertex distinct from $\rho_C$ and
$\eta_C$.

In \cite{Gusfield:03}, galled-trees were introduced as phylogenetic networks in which 
all cycles are vertex disjoint. Here we consider a more general version, where cycles are 
allowed to share a cutvertex, see also \cite{HSS:22cluster} for more details.

If $N=(V,E)$ is a galled-tree and $(u,v)\in E$, then the vertex $v$
is a \emph{child of $u$}. The set of children of a vertex $u$ in $N$ is denoted
by $\child_N(u)$.

A \emph{caterpillar} is a tree $T$ such that the subgraph induced by the inner vertices is a path with the root
$\rho_T$ at one end of this path and each inner vertex has exactly two
children. A subset $\{x,y\}\subseteq X$ is a \emph{cherry} if
the two leaves $x$ and $y$ are adjacent to the same vertex in $T$. In this case, we
also say that $x$, resp., $y$ is \emph{part of a cherry}. 
Note that a caterpillar
on $X$ with $|X|\geq 2$ contains precisely two vertices that are part of a cherry.

A \emph{caterpillar} is a tree $T$ such that the subgraph induced by the inner vertices is a path with the root
$\rho_T$ at one end of this path and each inner vertex has exactly two
children. A subset $\{x,y\}\subseteq X$ is a \emph{cherry} if
the two leaves $x$ and $y$ are adjacent to the same vertex in $T$. In this case, we
also say that $x$, resp., $y$ is \emph{part of a cherry}. 
Note that a caterpillar
on $X$ with $|X|\geq 2$ contains precisely two vertices that are part of a cherry.

\paragraph{\bf Labeled Galled-trees and Galled-Tree Explainable (\gatex) Graphs}

We consider galled-trees $N=(V,E)$ on $X$ that are equipped with a
\emph{(vertex-)labeling} $t$ i.e., a map $t\colon V\to\{0,1,\odot\}$ such that
$t(x)=\odot$ if and only if $x\in X$. Hence, every inner vertex $v\in V^0$ must have
some \emph{binary} label $t(v)\in \{0,1\}$. The labeling of the leaves $x\in X$ with
$t(x)=\odot$ is just a technical subtlety to cover the special case $V=X=\{x\}$. A
galled-tree $N$ with such a ``binary'' labeling $t$ is called \emph{labeled}
and denoted by $(N,t)$.

Let $N=(V,E)$ be a galled-tree on $X$. A vertex $u\in V$ is called an
\emph{ancestor} of $v\in V$ and $v$ a \emph{descendant} of $u$, in symbols $v
\preceq_N u$, if there is a directed path (possibly reduced to a single vertex)
in $N$ from $u$ to $v$. We write $v \prec_N u$ if $v \preceq_N u$ and $u\neq v$. For
a non-empty subset of leaves $A\subseteq V$ of $N$, we define $\lca_N(A)$, or a
\emph{lowest common ancestor of $A$}, to be a $\preceq_N$-minimal vertex of $N$
that is an ancestor of every vertex in $A$. 
\NEW{Note that (N4) allows that a hybrid vertex $w$ in a galled-tree $N$ might 
satisfy $w = \lca_N(V(C))$ for some non-trivial biconnected component $C$ (in which
case $w$ is the unique hybrid according to (N4) in a biconnected component different
from $C$ in  $N$).}
For simplicity we write
$\lca_N(x,y)$ instead of $\lca_N(\{x,y\})$. Note that in trees and galled-trees
the $\lca_N$ is uniquely determined \cite{HS18,HSS:22cluster}. This allows us to
establish the following 
\begin{definition}
Let $(N,t)$ be a labeled galled-tree on $X$. We denote with $\mathscr{G}(N,t) =
(X,E)$ the graph with vertex set $X$ and edges $\{x,y\}\in E$
precisely if $t(\lca_N(x,y))=1$ and say that $\mathscr{G}(N,t)$ is
\emph{explained} by $(N,t)$. A graph $G = (X,E)$ is
\emph{\textbf{\textsc{Ga}}lled-\textbf{\textsc{T}}ree
\textbf{\textsc{Ex}}plainable (\gatex)}) if there is a labeled galled-tree
$(N,t)$ such that $G\simeq \mathscr{G}(N,t)$.
\end{definition}

If $G$ is explained by a labeled galled-tree $(N,t)$ and one
converts the vertex-labels by replacing every `1' by `0' and \emph{vice versa}, 
one obtains a labeled galled-tree $(N,\overline t)$ that explains 
the complement $\overline G$ of $G$. For later reference, we summarize
the latter in 
\begin{observation}\label{obs:complement-galled-tree}
$G$ is \gatex if and only if 
$\overline G$ is \gatex.
\end{observation}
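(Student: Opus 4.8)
The plan is to prove Observation~\ref{obs:complement-galled-tree} by exploiting the symmetry of the labeling $t\colon V\to\{0,1,\odot\}$ under swapping the binary labels $0\leftrightarrow 1$. The key observation is that the construction in the Definition only depends on the labeling through the test $t(\lca_N(x,y))=1$, and that swapping labels converts this test for $G$ into the corresponding test for $\overline G$.

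First I would set up the label-swap map. Given a labeled galled-tree $(N,t)$, define $\overline t\colon V\to\{0,1,\odot\}$ by $\overline t(v)=1-t(v)$ for every inner vertex $v\in V^0$ (so $0$ and $1$ are interchanged) and $\overline t(x)=\odot$ for every leaf $x\in X$. Since only the leaf set and the edge set of $N$ (not the labeling) determine the ancestor relation $\preceq_N$ and hence the lowest common ancestors, the vertex $\lca_N(x,y)$ is the same whether we use $t$ or $\overline t$. This is the crucial point: relabeling changes labels but leaves $N$, and therefore all $\lca_N$'s, untouched.

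Next I would verify that $\mathscr{G}(N,\overline t)=\overline{\mathscr{G}(N,t)}$. Both graphs have the same vertex set $X$. For distinct $x,y\in X$, the vertex $w\coloneqq\lca_N(x,y)$ is an inner vertex (any common ancestor of two distinct leaves has outdegree at least $2$, hence lies in $V^0$), so $t(w)\in\{0,1\}$. Then $\{x,y\}$ is an edge of $\mathscr{G}(N,\overline t)$ iff $\overline t(w)=1$ iff $t(w)=0$ iff $t(w)\neq 1$ iff $\{x,y\}\notin E(\mathscr{G}(N,t))$, which is exactly the condition for $\{x,y\}$ to be an edge of $\overline{\mathscr{G}(N,t)}$. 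Hence the two edge sets coincide.

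Finally I would assemble the equivalence. If $G$ is \gatex, there is $(N,t)$ with $G\simeq\mathscr{G}(N,t)$; then $(N,\overline t)$ is again a labeled galled-tree (it satisfies the same (N0)--(N4) and the labeling convention, since these are conditions on $N$ and on leaf-vs-inner labels, both preserved by the swap), and $\overline G\simeq\overline{\mathscr{G}(N,t)}=\mathscr{G}(N,\overline t)$, so $\overline G$ is \gatex. The converse follows by symmetry, or simply by applying the forward direction to $\overline G$ together with $\overline{\overline G}=G$. I do not expect a genuine obstacle here; the only point requiring a moment of care is confirming that $\overline t$ is still an admissible labeling in the sense of the Definition (leaves get $\odot$, inner vertices get a binary label), which is immediate since the swap fixes the partition of $V$ into leaves and inner vertices.
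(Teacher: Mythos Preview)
Your proof is correct and follows essentially the same approach as the paper: the paper simply remarks that swapping the labels $0\leftrightarrow 1$ on a labeled galled-tree $(N,t)$ yields a labeled galled-tree $(N,\overline t)$ that explains $\overline G$, and records this as the observation without further elaboration. Your write-up is a more detailed version of exactly this argument.
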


Note that a galled-tree is a special case of a \emph{tree-child} network
\cite{CRV07}, that is, a network in which every inner vertex $v$ has a child $w$ that is a tree-vertex. In particular, if $N=(V,E)$ is a galled
tree with leaf set $X$ and the set of hybrid-vertices $H$, then it holds that $|H| \leq |X|-1$ and $|V| \leq 4|X|-3$ by \cite[Prop.\
1]{CRV07} and the fact the every hybrid-vertex has indegree two.
Hence, the information about the vertices and edges of \gatex
graphs can be stored efficiently in $O(|X|)$-space.

\paragraph{\bf Modular Decomposition (MD)}

A \emph{module} $M$ of a graph $G=(X,E)$ is a subset 
\NEW{$M\subseteq V(G)  = X$} such that
for all $x,y\in M$ it holds that $N_G(x)\setminus M = N_G(y)\setminus M$, where
$N_G(x)$ is the set of all vertices of $X$ that are adjacent to $x$ in $G$. The singletons and
$X$ are the \emph{trivial} modules of $G$ and thus, the set $\MD(G)$ of all
modules of $G$ is not empty. A graph $G$ is called \emph{primitive} if it has at
least four vertices and contains trivial modules only. The smallest primitive
graph is the path $P_4$ on four vertices. 

A module $M$ of $G$ is \emph{strong} if $M$ does not \emph{overlap} with any
other module of $G$, that is, $M\cap M' \in \{M, M', \emptyset\}$ for all
modules $M'$ of $G$. In particular, all trivial modules of $G$ are strong. The
set of strong modules $\MDstrong(G)\subseteq \MD(G)$ is uniquely determined
\cite{HSW:16,EHMS:94} and forms a hierarchy which gives rise to a unique tree
representation $\MDT_G$ of $G$, known as the \emph{modular decomposition tree}
(\emph{MDT}) of $G$. The vertices of $\MDT_G$ are (identified with) the elements
of $\MDstrong(G)$. Adjacency in $\MDT_G$ is defined by the maximal proper
inclusion relation, that is, there is an edge $(M',M)$ between $M,M'\in
\MDstrong(G)$ if and only if $M\subsetneq M'$ and there is no $M''\in
\MDstrong(G)$ such that $M\subsetneq M'' \subsetneq M'$. The root of $\MDT_G$ is
(identified with) $X$ and every leaf is uniquely identified with a singleton
$\{v\}$, $v\in X$. In summary, $\MDstrong(G) = \{L(\MDT_G(v)) \mid v\in
V(\MDT_G)\}$. Uniqueness and the hierarchical structure of $\MDstrong(G)$
implies that there is a unique partition $\Mmax(G) = \{M_1,\dots, M_k\}$ of $X$
into inclusion-maximal strong modules $M_j\ne X$ of $G$ \cite{ER1:90,ER2:90}.
Since $X\notin \Mmax(G)$ the set $\Mmax(G)$ consists of $k\ge 2$ strong modules,
whenever $|X|>1$.

Similar as for galled-trees, one can equip $\MDT_G$ with a vertex-labeling $t_G$
such that, for $M\in \MDstrong(G) = V(\MDT_G)$, we have $t_G(M)=\odot$ if
$|M|=1$; $t(M)=0$ if $|M|>1$ and $G[M]$ is disconnected; $t_G(M)=\mh{1}$ if $|M|>1$
and $G[M]$ is connected but $\overline G[M]$ is disconnected; $t_G(M) =
\mathit{prime}$ in all other cases. Strong modules of $G$ are called
\emph{series}, \emph{parallel} and \emph{prime} if $t_G(M)=1$, $t_G(M)=0$ and
$t_G(M) = \mathit{prime}$, respectively. Efficient linear algorithms to compute
$(\MDT_G,t)$ have been proposed e.g.\ in \cite{DGC:01,CS:99,TCHP:08}. In this
way, we obtain a labeled tree $(\MDT_G,\tau_G)$ that, at least to some extent,
``encodes'' the structure of $G$. In particular, $\tau_G(\lca(x,y))=1$ implies
$\{x,y\}\in E(G)$ and $\tau_G(\lca(x,y))=0$ implies $\{x,y\}\notin E(G)$. The
converse, however, is not satisfied in general, since
$\tau_G(\lca(x,y))=\mathit{prime}$ is possible.

Two disjoint modules $M, M'\in \MD(G)$ are \emph{adjacent} (in $G$) if each
vertex of $M$ is adjacent to all vertices of $M'$; the modules are
\emph{non-adjacent} if none of the vertices of $M$ is adjacent to a vertex of
$M'$. By definition of modules, every two disjoint modules $M, M'\in \MD(G)$ are
either adjacent or non-adjacent \cite[Lemma 4.11]{ER1:90}. One can therefore
define the quotient graph $G/\Mmax(G)$ which has $\Mmax(G)$ as its vertex set
and $\{M_i,M_j\}\in E(G/\Mmax(G))$ if and only if $M_i$ and $M_j$ are adjacent
in $G$. 
\begin{observation}[\cite{HP:10}]\label{obs:quotient} 
The quotient graph $G/\Mmax(G)$ with $\Mmax(G) = \{M_1 , \dots , M_k\}$ is
isomorphic to any subgraph induced by a set $W\subseteq V$ such that $|M_i \cap
W | = 1$ for all $i \in \{1, \dots,k\}$.
\end{observation}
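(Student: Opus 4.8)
The plan is to exhibit the explicit bijection induced by the choice of transversal $W$ and to verify that it preserves both adjacency and non-adjacency, the crucial input being the dichotomy that two disjoint modules are either adjacent or non-adjacent.

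First I would observe that, since $\Mmax(G) = \{M_1,\dots,M_k\}$ is a partition of $X$ and $|M_i \cap W| = 1$ for every $i$, the set $W$ contains exactly one vertex $w_i$ from each block $M_i$, so $|W| = k = |\Mmax(G)|$. I then define $\varphi\colon V(G/\Mmax(G)) \to W$ by $\varphi(M_i) = w_i$. By construction $\varphi$ is a bijection between the vertex set $\Mmax(G)$ of the quotient and the vertex set $W$ of the induced subgraph $G[W]$.

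Next I would check that $\varphi$ is edge-preserving in both directions. Suppose $\{M_i,M_j\} \in E(G/\Mmax(G))$. By definition of the quotient this means $M_i$ and $M_j$ are adjacent in $G$, i.e.\ every vertex of $M_i$ is adjacent to every vertex of $M_j$; in particular $\{w_i,w_j\} \in E(G)$, and hence $\{w_i,w_j\}$ is an edge of $G[W]$. For the converse, suppose $\{w_i,w_j\}$ is an edge of $G[W]$, i.e.\ $\{w_i,w_j\}\in E(G)$ with $i\neq j$. Since $M_i$ and $M_j$ are distinct blocks of a partition, they are disjoint modules of $G$, and therefore, by the dichotomy recalled above (\cite[Lemma 4.11]{ER1:90}), they are either adjacent or non-adjacent. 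They cannot be non-adjacent, since non-adjacency would force $w_i$ and $w_j$ to be non-adjacent, contradicting $\{w_i,w_j\}\in E(G)$. Hence $M_i$ and $M_j$ are adjacent, which means $\{M_i,M_j\} \in E(G/\Mmax(G))$.

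Together these two implications show that $\varphi$ is a graph isomorphism, so $G[W] \simeq G/\Mmax(G)$; moreover the argument is manifestly independent of which transversal $W$ is chosen, precisely because adjacency of two modules is an all-or-nothing property that does not depend on the representatives picked. The only nontrivial ingredient is the adjacency/non-adjacency dichotomy for disjoint modules; everything else is bookkeeping with the partition, so I do not expect any real obstacle here.
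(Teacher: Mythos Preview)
Your argument is correct: the bijection via the transversal together with the adjacency/non-adjacency dichotomy for disjoint modules is exactly what is needed, and you have checked both directions cleanly. Note that the paper does not actually supply a proof of this observation—it is quoted from \cite{HP:10}—so there is no in-paper argument to compare against; your write-up is the standard verification one would expect.
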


For later reference, we provide the following simple result. 
\begin{lemma}\label{lem:Hprimitive-in-primeM}
 Let $H$ be an induced primitive subgraph of $G$ and $M\in \MDstrong(G)$ be a
 strong module that is inclusion-minimal w.r.t.\ $V(H)\subseteq M$. Then, $M$ is
 prime and $H$ is isomorphic to an induced subgraph of $G[M]/\Mmax(G[M])$.
\end{lemma}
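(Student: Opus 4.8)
The plan is to exhibit an explicit embedding of $H$ into the quotient graph $Q\coloneqq G[M]/\Mmax(G[M])$ and to read off primality of $M$ from the shape of $Q$ afterwards. Write $\Mmax(G[M])=\{M_1,\dots,M_k\}$. I would first record two standard facts from modular decomposition theory: each $M_i$ is in fact a \emph{strong} module of $G$ with $M_i\subsetneq M$ (equivalently, $\MDT_{G[M]}$ is the subtree of $\MDT_G$ rooted at $M$), and the intersection $M_i\cap V(H)$ of a module with the vertex set of an induced subgraph is again a module of that subgraph, here of $H=(G[M])[V(H)]$. The first fact can be verified directly: if a maximal strong submodule $M_i$ of $G[M]$ overlapped some module $N$ of $G$, a short case analysis on $M\cap N\in\{M,N,\emptyset\}$ (using that $M$ is strong) yields a contradiction.

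The crux of the argument---and the step I expect to be the main obstacle---is to prove that the map $\phi\colon V(H)\to\Mmax(G[M])$ sending each $v$ to the unique block $M_i$ with $v\in M_i$ is injective. Since $M$ is inclusion-minimal among strong modules of $G$ containing $V(H)$ and each $M_i$ is a strong module of $G$ with $M_i\subsetneq M$, we cannot have $V(H)\subseteq M_i$; hence $M_i\cap V(H)\subsetneq V(H)$. By the second fact above, $M_i\cap V(H)$ is a module of the primitive graph $H$, so it must be trivial, and being a proper subset of $V(H)$ it is empty or a singleton. This is exactly the statement that every block $M_i$ contains at most one vertex of $V(H)$, i.e.\ that $\phi$ is injective.

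It remains to see that $\phi$ is a graph isomorphism from $H$ onto the induced subgraph $Q[\phi(V(H))]$, and to deduce primality. For distinct $u,v\in V(H)$ lying in distinct blocks $M_i=\phi(u)$ and $M_j=\phi(v)$, the dichotomy that two disjoint modules are either adjacent or non-adjacent gives $\{u,v\}\in E(H)$ if and only if $M_i$ and $M_j$ are adjacent in $G[M]$, which by definition of the quotient is equivalent to $\{M_i,M_j\}\in E(Q)$; together with Observation~\ref{obs:quotient} this shows that $H$ is isomorphic to an induced subgraph of $Q$. Finally, $M$ cannot be a singleton since $|M|\geq|V(H)|\geq 4$, and if $t_G(M)\in\{0,1\}$ then $Q$ is edgeless (parallel) respectively complete (series), so every induced subgraph of $Q$---and hence $H$---would be edgeless respectively complete, contradicting that $H$ is primitive. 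Therefore $t_G(M)=\mathit{prime}$, as claimed.
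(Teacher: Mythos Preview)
Your proof is correct and follows essentially the same idea as the paper's: both hinge on the observation that $M_i\cap V(H)$ is a module of $H$ and hence, by primitivity, has size at most one, which yields injectivity of the assignment $v\mapsto M_i$. The only organisational difference is that the paper first proves $M$ is prime by a direct connectedness argument (if $M$ were parallel, two vertices of $H$ lying in distinct children of $M$ would be non-adjacent to each other's components, forcing $H$ disconnected) and then establishes the embedding, whereas you first establish the embedding and then read off primality from the shape of $Q$; your route is slightly more streamlined, and your explicit remark that the $M_i$ are strong modules of $G$ (needed to invoke minimality of $M$) fills in a step the paper leaves implicit.
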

\begin{proof}
 Let $H$ be an induced primitive subgraph of $G$ and $M\in \MDstrong(G)$ be a
 strong module that is inclusion-minimal w.r.t.\ $V(H)\subseteq M$. Assume, for
 contradiction, that $M$ is not prime. W.l.o.g.\ assume that $M$ is parallel
 (otherwise consider the complement $\overline G$ of $G$). Since $M$ is
 inclusion-minimal w.r.t.\ $V(H)\subseteq M$, it follows that there are two
 vertices $x,y\in V(H)$ such that $x\in M'$ and $y\in M''$ where $M',M'' \in
 \Mmax(G[M])$ are distinct. In this case, however, $x$ and $y$ must be in
 distinct connected components of $H$. Hence, $H$ is disconnected and,
 therefore, not primitive; a contradiction. 
 
 We continue with showing that, for any two distinct vertices $x,y\in V(H)$ we
 have $x\in M'$ and $y\in M''$ with $M',M'' \in \Mmax(G[M])$ being distinct.
 Assume, for contradiction, that $x,y\in V(H)$ are distinct but $x,y\in M'\in
 \Mmax(G[M])$. Since $M$ is inclusion-minimal w.r.t.\ $V(H)\subseteq M$, it
 follows that there are is a vertex $z\in V(H)$ such that $z\in M'' \in
 \Mmax(G[M]) \setminus \{M'\}$. Thus, we can partition $V(H)$ into two non-empty
 sets $V_1$ and $V_2$ such that $V_1$ contains all vertices of $M'\cap V(H)$ and
 we put $V_2 = V(H)\setminus V_1$. By definition of modules, a vertex in $V_2$
 must be either adjacent to all or to none of the vertices in $M'$ and, thus, in
 particular, the vertices in $V_1$. This and $|V(H)|>|V_1|\geq 2$ implies that
 $V_1$ is a non-trivial module of $H$; a contradiction. In summary, for any
 two distinct vertices $x,y\in V(H)$ we have $x\in M'$ and $y\in M''$ with
 $M',M'' \in \Mmax(G[M])$ being distinct. Hence, there is a subset $\mathcal
 M\subseteq \Mmax(G[M])$ of size $|V(H)|$ such that there is a 1-to-1
 correspondence between the vertices in $|V(H)|$ and the modules in $\mathcal M$
 which together with Obs.\ \ref{obs:quotient} implies that $H$ is isomorphic to
 an induced subgraph of $G/\Mmax(G)$. 
\end{proof}

In \cite{SCHMERL1993191} the structure of primitive graphs (called indecomposable in \cite{SCHMERL1993191})
has been studied. A graph $G$ is \emph{critical-primitive} if $G-x$ is not primitive for all $x\in V(G)$. 

\begin{proposition}[{\cite[Cor.\ 5.8]{SCHMERL1993191}}]\label{prop:critical-primitive}
A graph $G$ is critical-primitive if and only if $G$ is isomorphic to the graph
$\mathcal{G}_r$ or its complement $\overline{\mathcal{G}}_r$ where, for $r\geq
2$, $\mathcal{G}_r$ has vertex set $\{a_1,\dots,a_r,b_1,\dots,b_r\}$ and
$\{a_i,b_j\}$ is an edge in $\mathcal{G}_r$ precisely if $i\geq j$. 
\end{proposition}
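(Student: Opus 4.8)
The plan is to prove both implications, using that primitivity, the module relation, and hence critical-primitivity, are all invariant under complementation; so it suffices to argue for $\mathcal G_r$ and to obtain every statement about $\overline{\mathcal G}_r$ by passing to complements. Since the graphs in the statement are primitive, I read ``critical-primitive'' as ``$G$ is primitive and $G-x$ is non-primitive for every $x$'', and I record the convenient reformulation that, as no graph on at most three vertices is primitive, $G$ is critical-primitive if and only if $G$ is primitive and contains no primitive induced subgraph on $|V(G)|-1$ vertices.

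For ``$\Leftarrow$'' I would first check that $\mathcal G_r$ is primitive. With $A=\{a_1,\dots,a_r\}$ and $B=\{b_1,\dots,b_r\}$ one has $N_{\mathcal G_r}(a_i)=\{b_1,\dots,b_i\}$ and $N_{\mathcal G_r}(b_j)=\{a_j,\dots,a_r\}$, so the neighbourhoods form strictly nested chains. Let $M$ be a module with $|M|\ge 2$. If $M\subseteq A$, any $a_i,a_k\in M$ with $i<k$ are separated by the external vertex $b_k$ (adjacent to $a_k$, not to $a_i$), a contradiction; symmetrically $M\not\subseteq B$. Hence $M$ meets both sides, say $a_i,b_j\in M$. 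Any external $a_{i'}$ is non-adjacent to $a_i$, so it must avoid $b_j$, giving $i'<j$; thus $\{j,\dots,r\}\subseteq\{k:a_k\in M\}$, and symmetrically $\{1,\dots,\max\{k:a_k\in M\}\}\subseteq\{l:b_l\in M\}$. Iterating these two inclusions forces $M=V(\mathcal G_r)$, so $\mathcal G_r$ has $2r\ge4$ vertices and only trivial modules. Criticality is then immediate from twins: deleting $b_j$ with $j\ge2$ turns $a_{j-1},a_j$ into false twins, so $\{a_{j-1},a_j\}$ is a non-trivial module of $\mathcal G_r-b_j$; deleting $b_1$ isolates $a_1$, so all remaining vertices except $a_1$ form a module; and the automorphism $a_i\mapsto b_{r+1-i}$, $b_j\mapsto a_{r+1-j}$ reduces the deletions of the $a_i$ to these cases. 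Passing to complements yields the same for $\overline{\mathcal G}_r$.

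The direction ``$\Rightarrow$'' is the substantial one, and I would argue by induction on $n=|V(G)|$. The base case $n=4$ is clear: the unique primitive graph on four vertices is $P_4=\mathcal G_2$, and it is automatically critical-primitive since each $G-v$ has only three vertices. For the inductive step I would invoke the Schmerl--Trotter reduction theorem from \cite{SCHMERL1993191}, which provides a primitive induced subgraph on $n-1$ or $n-2$ vertices; as criticality excludes the first option, there is a pair $\{x,y\}$ with $H\coloneqq G-x-y$ primitive on $n-2$ vertices (small values of $n$ being treated by inspection). After arguing that $H$ is again critical-primitive, the induction hypothesis gives $H\cong\mathcal G_{r-1}$ up to complementation, which (after possibly replacing $G$ by $\overline G$) equips $H$ with the canonical nested-neighbourhood labelling; it then remains to reconstruct $G$ by determining how $x$ and $y$ attach to $H$ and to show these adjacencies are forced, up to isomorphism, to extend $\mathcal G_{r-1}$ to $\mathcal G_r$.

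The main obstacle is precisely this reconstructive step. Primitivity of $G$ forbids $x$ or $y$ from being a twin of an existing vertex or from forming a module with others, whereas criticality demands that $G-x$, $G-y$ and every $G-w$ still contain a non-trivial module; reconciling these opposing constraints to pin down the neighbourhoods of $x$ and $y$ in $H$ and to show that the only completion consistent with them rebuilds the nested-neighbourhood pattern of $\mathcal G_r$ is the delicate combinatorial core, where the case analysis of the original proof seems unavoidable. A secondary difficulty is ensuring that $H=G-x-y$ really inherits critical-primitivity so that the induction hypothesis applies; this may force a more careful choice of the deleted pair or a suitably strengthened inductive statement.
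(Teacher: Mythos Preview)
The paper does not prove this proposition at all: it is stated as a quotation of \cite[Cor.~5.8]{SCHMERL1993191} and used as a black box (only its corollary about $F_7,F_8$ is needed later). So there is no ``paper's own proof'' to compare against.

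Your ``$\Leftarrow$'' direction is essentially complete and correct: the module analysis for primitivity of $\mathcal G_r$ is sound, the twin arguments for criticality after deleting $b_j$ work as stated, and the map $a_i\mapsto b_{r+1-i}$, $b_j\mapsto a_{r+1-j}$ is indeed an automorphism of $\mathcal G_r$, reducing the $a$-deletions to the $b$-deletions.

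Your ``$\Rightarrow$'' direction, however, is only a plan, and you yourself identify the two genuine gaps. The first is the more serious: there is no reason that an arbitrary primitive $H=G-x-y$ provided by the Schmerl--Trotter reduction theorem should again be critical-primitive, so the induction hypothesis cannot be invoked as stated. (Concretely, in $\mathcal G_r$ with $r\ge3$ the pairs whose removal yields a primitive graph are very constrained; for a general critical-primitive $G$ you have no control over which pair the reduction theorem hands you.) The second gap, the reconstruction of the attachment of $x$ and $y$, is exactly the heart of the Schmerl--Trotter argument and is not something one can wave through. In their paper the proof proceeds differently: rather than a bare induction on $|V(G)|$, they develop a structural theory of ``critically indecomposable'' substructures and show that such a $G$ decomposes as a chain of $2$-element extensions of $P_4$, each step being of a rigidly prescribed form; the classification into $\mathcal G_r$ and $\overline{\mathcal G}_r$ then falls out. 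If you want to complete your approach, you would need either to prove that \emph{every} primitive $(n-2)$-vertex induced subgraph of a critical-primitive $G$ is itself critical-primitive (this is true, and is essentially what Schmerl--Trotter establish en route), or to strengthen the inductive statement to carry enough structure to force the attachment of $x$ and $y$.
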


\section{Cographs,  Pseudo-cographs and \gatex Graphs}

\emph{Cographs} are defined recursively \cite{Corneil:81}: the single vertex
graph $K_1$ is a cograph and the join $H \join H'$ and disjoint union $H \union
H'$ of two cographs $H$ and $H'$ is a cograph. In particular, cographs are
precisely those graphs that are explained by their MDT $(\MDT_G,t_G)$. Many
characterizations of cographs have been established in the last decades
\cite{Corneil:81,Sumner74,Seinsche:74,HS:22} from which we summarize a few of
them in the following

\begin{theorem}\label{thm:prop_cograph}
	The following statements are equivalent.
 \begin{enumerate}[noitemsep]
 	\item $G$ is a cograph.
 	\item The complement $\overline G$ of $G$ is a cograph.
 	\item Every induced subgraph of $G$ is a cograph.
 	\item $G$ does not contain a path on four vertices as an induced subgraph.
 	\item The  MDT $(\MDT_G,t_G)$ of $G$ does not contain vertices $v$ such that $t_G(v)=\mathit{prime}$.
 	\item $G$ can be explained by a labeled galled-tree $(N,t)$
 		  for which all cycles are weak. \cite[Thm.\ 5.5]{HS:22}

 		 Thus, cographs form a proper subclass of \gatex graphs. 
 \end{enumerate} 
\end{theorem}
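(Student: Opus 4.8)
The plan is to prove the six statements equivalent by organising everything around statement (1) and proving the easy implications directly while citing the two classical facts. Since $G$ is an induced subgraph of itself, the implication (3)$\Rightarrow$(1) is immediate, and for (1)$\Rightarrow$(3) I would argue by induction on $|V(G)|$ along the recursive definition: every induced subgraph $G[W]$ of a disjoint union $H\union H'$ (resp.\ a join $H\join H'$) is itself the disjoint union (resp.\ join) of $G[W\cap V(H)]$ and $G[W\cap V(H')]$, so the cograph property is inherited. The equivalence (1)$\Leftrightarrow$(2) follows from the same recursive definition together with the duality $\overline{H\join H'}=\overline H\union \overline{H'}$ and $\overline{H\union H'}=\overline H\join \overline{H'}$: complementation merely interchanges the two building operations, so an easy induction shows that $G$ is a cograph if and only if $\overline G$ is.

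For (1)$\Leftrightarrow$(4) I would invoke the classical Seinsche/Corneil characterization cited in the introduction. The direction (1)$\Rightarrow$(4) can also be given directly: $P_4$ is connected and its complement is again $P_4$ (so $P_4$ is primitive), and by induction no induced $P_4$ can survive a disjoint union (a connected four-vertex graph must lie in one part) or---passing to the complement---a join. The converse (4)$\Rightarrow$(1), that every $P_4$-free graph is a cograph, is the substantive classical result, and here I would simply cite it rather than reprove it.

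The equivalence (1)$\Leftrightarrow$(5) I would read off the modular-decomposition machinery recalled just before the theorem. By the labelling rule for $(\MDT_G,t_G)$, a strong module $M$ with $|M|>1$ receives label $\mathit{prime}$ exactly when both $G[M]$ and $\overline{G[M]}$ are connected. If no such $M$ exists, every inner vertex of $\MDT_G$ is series or parallel, and one checks top-down along $\MDstrong(G)$, using the quotient $G/\Mmax(G)$ of Obs.\ \ref{obs:quotient}, that $G$ is built from singletons by joins and disjoint unions, hence is a cograph; conversely, in a cograph each strong module is series or parallel and therefore never has both $G[M]$ and $\overline{G[M]}$ connected, so no prime label occurs. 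Finally, (5)$\Leftrightarrow$(6) is exactly \cite[Thm.\ 5.5]{HS:22}, which I would cite verbatim.

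The main obstacle is the hard direction (4)$\Rightarrow$(1): showing that $P_4$-freeness forces the recursive join/union structure is the only step that is not routine bookkeeping, and since the paper already cites the relevant literature I would lean on that citation rather than reconstruct Seinsche's argument. Everything else reduces to inductions along the recursive definition or along the strong-module hierarchy, where the only care needed is to keep the complement duality (Obs.\ \ref{obs:complement-galled-tree} at the level of galled-trees, and the analogous statement for the MDT) consistently aligned between statements (1), (2) and (5).
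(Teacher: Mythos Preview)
Your sketch is correct, but note that the paper does not actually prove this theorem: it is stated as a summary of well-known characterizations, with the equivalences (1)--(5) attributed to the classical references \cite{Corneil:81,Sumner74,Seinsche:74} and the equivalence with (6) to \cite[Thm.\ 5.5]{HS:22}. So the ``paper's own proof'' is simply to cite the literature.

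Your proposal goes further than the paper in that you outline the easy inductions for (1)$\Leftrightarrow$(2)$\Leftrightarrow$(3), sketch the modular-decomposition argument for (1)$\Leftrightarrow$(5), and isolate the one non-trivial step (4)$\Rightarrow$(1) as the place where a genuine citation is needed. This is all sound. One small inaccuracy: you write that (5)$\Leftrightarrow$(6) is exactly \cite[Thm.\ 5.5]{HS:22}, but the cited result is stated as the equivalence of (1) and (6); of course once you have (1)$\Leftrightarrow$(5) this is immaterial.
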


A cograph is \emph{caterpillar-explained} if its MDT is a caterpillar. We note
in passing that caterpillar-explained cographs form a proper subclass of
threshold graphs \cite{CHVATAL1977145}, i.e., graphs that can be obtained from a
single vertex graph by repeatedly adding either an isolated vertex or a vertex
that is adjacent to all pre-existing vertices.

A proper super-class of cographs are pseudo-cographs. 

\begin{definition}[Pseudo-Cographs]\label{def:pseudo-cograph}
	A graph $G$ is a \emph{pseudo-cograph} if $|V(G)|\leq 2$ or
	if there are induced subgraphs $G_1,G_2\subseteq G$ and a vertex $v\in V(G)$
 such that 
 \begin{enumerate}[noitemsep]
 	\item[(P1)] $V(G)=V(G_1)\cup V(G_2)$, $V(G_1)\cap V(G_2) = \{v\}$,  $|V(G_1)|>1$ and 
 	$|V(G_2)|>1$; and \smallskip
 	\item[(P2)] $G_1$ and $G_2$ are cographs; and \smallskip
 	\item[(P3)] $G-v$ is either 
 	            the join or the disjoint union of $G_1-v$ and $G_2-v$. 
 \end{enumerate} 
 In this case, we also say that $G$ is a  \emph{$(v,G_1,G_2)$-pseudo-cograph}. 
  A $(v,G_1,G_2)$-pseudo-cograph $G$ is \emph{slim} (resp., \emph{fat})  
  if $G-v$ is the disjoint union (resp., join) of $G_1-v$ and $G_2-v$. 
\end{definition}

Note that the choice of $G_1$, $G_2$ and $v$ may not be unique. In particular,
$G$ can be a slim $(v,G_1,G_2)$-pseudo-cograph and a fat
$(v',G'_1,G'_2)$-pseudo-cograph for different choices of the triplets
$(v,G_1,G_2)$ and $(v',G'_1,G'_2)$. By way of example, consider a path
$G=1-2-3-4$ on four vertices. Here, $G$ is a fat $(1,G_1,G_2)$-pseudo-cograph
for $G_1=G[\{1,3\}]$ and $G_2=G[\{1,2,4\}]$ and a slim
$(2,G'_1,G'_2)$-pseudo-cograph for $G'_1 =G[\{1,2\}]$ and $G'_2 =G[\{2,3,4\}]$.

By definition of pseudo-cographs and, in particular (P3), we easily derive
\begin{observation}\label{obs:G-v-Cograph}
	If $G$ is a \emph{$(v,G_1,G_2)$-pseudo-cograph}, then $G-v$ is a cograph
	and either $G-v$ or its complement $\overline{G-v}$ is disconnected.
\end{observation}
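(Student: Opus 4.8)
The plan is to prove Observation~\ref{obs:G-v-Cograph} directly from Definition~\ref{def:pseudo-cograph}, treating the two defining cases separately and then invoking the structural facts about cographs recorded in Theorem~\ref{thm:prop_cograph}. Throughout, let $G$ be a $(v,G_1,G_2)$-pseudo-cograph, so by (P1) we have $V(G)=V(G_1)\cup V(G_2)$ with $V(G_1)\cap V(G_2)=\{v\}$, by (P2) both $G_1$ and $G_2$ are cographs, and by (P3) the graph $G-v$ is either the join or the disjoint union of $G_1-v$ and $G_2-v$.

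First I would establish that $G-v$ is a cograph. Since $G_1$ and $G_2$ are cographs and the cograph class is closed under taking induced subgraphs (Theorem~\ref{thm:prop_cograph}(3)), both $G_1-v$ and $G_2-v$ are cographs. The recursive definition of cographs states that the join and the disjoint union of two cographs is again a cograph; hence in either case of (P3) the graph $G-v$ is a cograph. One minor point to handle is the degenerate case $|V(G)|\leq 2$ in Definition~\ref{def:pseudo-cograph}, for which (P1)--(P3) need not apply: here $G-v$ has at most one vertex and is trivially a cograph, so the claim holds vacuously.

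Next I would verify the connectivity dichotomy. If $G-v$ is the disjoint union of $G_1-v$ and $G_2-v$, then by (P1) both $V(G_1)\setminus\{v\}$ and $V(G_2)\setminus\{v\}$ are non-empty (as $|V(G_1)|>1$ and $|V(G_2)|>1$), so $G-v$ has at least two connected components and is therefore disconnected. Dually, if $G-v$ is the join of $G_1-v$ and $G_2-v$, then every vertex of $V(G_1)\setminus\{v\}$ is adjacent to every vertex of $V(G_2)\setminus\{v\}$, which means that in the complement $\overline{G-v}$ these two non-empty vertex sets are completely non-adjacent; hence $\overline{G-v}$ is disconnected. Thus in either case of (P3), exactly one of $G-v$ or $\overline{G-v}$ is disconnected.

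I do not anticipate a genuine obstacle here, as the statement is essentially an unpacking of the definition; the only care needed is to treat the base case $|V(G)|\leq 2$ separately and to use the non-emptiness guaranteed by (P1) so that ``disjoint union'' really yields two components (rather than a single component with an empty part) and likewise for the join. With these two blocks combined, both conclusions of the observation follow immediately.
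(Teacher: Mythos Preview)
Your proof is correct and follows essentially the same approach as the paper, which simply notes that the observation is an immediate consequence of Definition~\ref{def:pseudo-cograph} and in particular (P3). One small remark: the degenerate case $|V(G)|\leq 2$ cannot actually occur under the hypothesis, since the phrase ``$(v,G_1,G_2)$-pseudo-cograph'' refers specifically to the situation in which (P1)--(P3) hold, and (P1) already forces $|V(G)|\geq 3$; so that paragraph can safely be dropped.
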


Several characterizations of and algorithmic results for pseudo-cographs have
been established in \cite{HS:22} and are summarized in the following
\begin{theorem}\label{thm:prop_pc}
	The following statements are equivalent.
 \begin{enumerate}[noitemsep]
 	\item $G$ is a pseudo-cograph.
 	\item The complement $\overline G$ of $G$ is a pseudo-cograph.
 	\item Every induced subgraph of $G$ is a pseudo-cograph.
 	\item $|V(G)|\leq 2$ or $G$ can be explained by a galled-tree $(N,t)$ that contains precisely
 		 one cycle $C$, and $C$ is such that $\rho_C = \rho_N$ and $\child_N(\eta_C)=\{x\}$ for some $x\in  L(N)$.
 		 
 		 Thus, pseudo-cographs form a proper subclass of \gatex graphs. 
 \end{enumerate} 
 In particular, $G$ is a $(v,G_1,G_2)$-pseudo-cograph if and only if 
 $\overline G$ is a $(v,\overline G_1,\overline G_2)$-pseudo-cograph. 
 Moreover, Pseudo-cographs $G = (V , E)$ can be recognized in $O( | V | + | E |
 )$ time. In the affirmative case, one can determine a vertex $v$ and subgraphs
 $G_1 , G_2 \subseteq G$ such that $G$ is a $( v, G_1 , G_2 )$-pseudo-cograph
 and a labeled galled-tree that explains $G$ in $O( | V | + | E | )$ time.
\end{theorem}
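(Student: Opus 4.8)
The plan is to establish the four equivalences through the short chains $(1)\Leftrightarrow(2)$, $(1)\Leftrightarrow(3)$ and $(1)\Leftrightarrow(4)$, reading off the displayed complement-characterisation on the way and deferring the algorithmic claims to the end. A fact I will use throughout is that \emph{every cograph is a pseudo-cograph}: if $G$ is a connected cograph with $|V(G)|>2$ then $\overline G$ is disconnected, so $G=A\join B$ with $A,B\neq\emptyset$, and since $|V(G)|\geq 3$ we may assume $|A|\geq 2$; taking any $v\in A$, the graphs $G_1\coloneqq G[A]$ and $G_2\coloneqq G[B\cup\{v\}]$ satisfy (P1)--(P3) with $G-v=(G_1-v)\join(G_2-v)$, and the disconnected case follows by complementation. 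To obtain $(1)\Leftrightarrow(2)$ I prove the stronger displayed statement directly: complementation fixes all vertex sets so (P1) is untouched, sends cographs to cographs by Thm~\ref{thm:prop_cograph} so (P2) is untouched, and exchanges join and disjoint union while respecting $\overline G-v=\overline{G-v}$ and $\overline{G_i}-v=\overline{G_i-v}$, so (P3) survives with ``slim'' and ``fat'' interchanged. Hence $G$ is a $(v,G_1,G_2)$-pseudo-cograph iff $\overline G$ is a $(v,\overline G_1,\overline G_2)$-pseudo-cograph, which in particular yields $(1)\Leftrightarrow(2)$.

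For $(1)\Leftrightarrow(3)$ only $(1)\Rightarrow(3)$, i.e.\ hereditariness, requires work. Let $G$ be a $(v,G_1,G_2)$-pseudo-cograph and $W\subseteq V(G)$. If $v\notin W$ then $G[W]$ is an induced subgraph of the cograph $G-v$ (Obs.~\ref{obs:G-v-Cograph}) and hence a cograph, thus a pseudo-cograph. If $v\in W$, set $W_i=W\cap V(G_i)$, so that $W_1\cap W_2=\{v\}$ and $W_1\cup W_2=W$; should some $|W_i|=1$ then $W\subseteq V(G_{3-i})$ and $G[W]$ is again a cograph. Otherwise $|W_1|,|W_2|>1$ and $G[W]$ is a $(v,G[W_1],G[W_2])$-pseudo-cograph: (P1) and (P2) are clear, and (P3) holds because the adjacencies between $G_1-v$ and $G_2-v$ in $G-v$ are uniform, so their restriction to $W_1\setminus\{v\}$ against $W_2\setminus\{v\}$ is a join or a disjoint union of the same type.

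The structural heart is $(1)\Leftrightarrow(4)$. For $(4)\Rightarrow(1)$ I take the single cycle $C$ with $\rho_C=\rho_N$ and unique leaf $x$ with $\child_N(\eta_C)=\{x\}$, split $C$ into its two $\rho_C$--$\eta_C$ paths $P^1,P^2$, and let $G_i$ be induced by $\{x\}$ together with all leaves below the subtrees hanging off the interior of $P^i$; cutting the hybrid edge on the other side exhibits $G_i$ as explained by a tree, hence a cograph. For $a$ hanging off $P^1$ and $b$ off $P^2$ one checks $\lca_N(a,b)=\rho_C$, so all such pairs are uniformly (non-)adjacent according to $t(\rho_C)$, giving (P3) with $v\coloneqq x$; when one side has no interior vertex the cycle is weak, whence $G$ is a cograph by Thm~\ref{thm:prop_cograph} and we are done by the fact above. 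For $(1)\Rightarrow(4)$ I build the network from cotrees $(T_1,t_1),(T_2,t_2)$ of $G_1,G_2$: the two root-to-$v$ paths form the interiors of $P^1,P^2$, carrying their labels and off-path subtrees; a fresh root $\rho_C=\rho_N$ is placed above both cotree roots with $t(\rho_C)=1$ if $G$ is fat and $0$ if $G$ is slim; and a fresh hybrid $\eta_C$ placed below the two parents of $v$ receives the single leaf child $x\coloneqq v$. One then verifies $\mathscr G(N,t)=G$ by comparing lowest common ancestors: within one side the labels replicate the cotree of $G_i$; the cycle vertex carrying a leaf $a$ has label $t_i(\lca_{T_i}(v,a))$, so the adjacencies of $x$ replicate those of $v$ in $G_i$; and cross-side pairs meet at $\rho_C$ and are governed by (P3). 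Finally the galled-tree axioms and ``exactly one cycle'' are checked, the essential point being that every interior cycle vertex keeps out-degree at least $2$ because each vertex on a root-to-$v$ path of a cotree has at least one off-path sibling subtree.

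The subtle structural point---and the one I expect to be the main obstacle---is exactly that the cycle in $(4)$ can always be forced to the root even when the unique prime module of $G$ lies deep inside $\MDT_G$; gluing the two root-to-$v$ paths is what performs this lifting, and the accompanying lca-bookkeeping is where the verification must be done with care. For the algorithmic statement I first compute $(\MDT_G,t_G)$ in $O(|V|+|E|)$ time \cite{DGC:01,CS:99,TCHP:08}. Since $G-v$ is a cograph (Obs.~\ref{obs:G-v-Cograph}), every prime module of a pseudo-cograph contains the distinguished vertex $v$; as strong modules through $v$ are nested and a nested pair of prime modules would both survive the deletion of $v$, a pseudo-cograph has at most one prime vertex $P$ in $\MDT_G$. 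Recognition thus reduces to the cograph case (no prime vertex, Thm~\ref{thm:prop_cograph}) together with a test on the unique prime quotient $G[P]/\Mmax(G[P])$---of size at most $|V|$ and computable in linear time by Obs.~\ref{obs:quotient}---of whether $P$ can be resolved by a single cycle, i.e.\ whether this quotient is a primitive pseudo-cograph admitting a suitable singleton block. This last test, together with the extraction of a defining triple and its explaining cycle, is the genuinely delicate step; it is performed in linear time in \cite{HS:22} using the structure theory of primitive pseudo-cographs (polar-cats), for which Prop.~\ref{prop:critical-primitive} on critical-primitive graphs furnishes the combinatorial backbone. Assembling the resulting root cycle with the cotrees of the remaining modules then produces, within the same time bound, both a valid triple $(v,G_1,G_2)$ and a labeled galled-tree explaining $G$.
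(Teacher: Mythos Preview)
The paper does not prove Theorem~\ref{thm:prop_pc}; it is stated as a summary of results from \cite{HS:22} (see the sentence immediately preceding the theorem). There is therefore no in-paper proof to compare against. Your write-up is a genuine, largely self-contained argument where the paper offers only a citation.

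Your proofs of $(1)\Leftrightarrow(2)$ and $(1)\Leftrightarrow(3)$ are clean and correct. The construction for $(1)\Rightarrow(4)$ is the right one and your verification of the galled-tree axioms and of the lca-behaviour is accurate; in particular your observation that every internal cycle vertex retains out-degree $\ge 2$ because each vertex on a root-to-$v$ path of a cotree has at least one off-path child is exactly what is needed. For $(4)\Rightarrow(1)$ there is one small gap: nothing in condition~(4) forbids $\rho_C=\rho_N$ from having additional children that are \emph{not} on the cycle, and by your definition of $G_1,G_2$ as ``leaves below the interior of $P^i$'' such leaves would be left out of both parts. This is easily repaired by absorbing any off-cycle subtrees at $\rho_C$ into one of the $G_i$; the uniform adjacency to the rest (governed by $t(\rho_C)$) keeps (P2) and (P3) intact.

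Two minor remarks on the algorithmic paragraph. First, the sentence ``a nested pair of prime modules would both survive the deletion of $v$'' is not a valid argument as written; the correct reason a pseudo-cograph has at most one prime module is that for two nested prime strong modules $M_1\subsetneq M_2$ containing $v$, the primitive quotient $G[M_2]/\Mmax(G[M_2])$ already yields an induced $P_4$ in $G[M_2]$ avoiding $M_1$ (and hence avoiding $v$), contradicting that $G-v$ is a cograph---or, more simply, one appeals to the $1$--$1$ correspondence between non-weak cycles and prime modules together with~(4). Second, Proposition~\ref{prop:critical-primitive} is not the ``combinatorial backbone'' of the linear-time recognition in \cite{HS:22}; that result is used in the present paper only for Corollary~\ref{cor:critical-primitive}, whereas the recognition algorithm rests on the explicit edge description of polar-cats in Proposition~\ref{prop:polcat-edges}. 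Neither point affects the correctness of your equivalence proofs.
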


Note that, by \cite[Lemma 4.2]{HS:22}, every cograph is a pseudo-cograph.
A further subclass of pseudo-cographs are polar-cats. 

\begin{definition}[Polar-Cat]
A  $(v,G_1,G_2)$-pseudo-cograph $G$ is a polar-cat if $G$ satisfies
the following conditions:
\begin{enumerate}
	\item[(C1)] $|V(G)|\geq 4$
	\item[(C2)] \emph{polar}izing: $G_1$ and $G_2$ are both connected (resp., both disconnected) and 
			 $G-v$ is the disjoint union (resp., join) of $G_1-v$ and $G_2-v$.
	\item[(C3)] \emph{cat}: $G_1$ and $G_2$ are \emph{cat}erpillar-explained such that $v$ is part of a cherry in
		        both caterpillars that explain $G_1$, resp, $G_2$. 
\end{enumerate}
In this case, we also say that $G$ is a \emph{$(v,G_1,G_2)$-polar-cat}. A
$(v,G_1,G_2)$-polar-cat is \emph{well-proportioned} if $|V(G_1)| \geq 3$ and
$|V(G_2)| \geq 3$, or $|V(G_i)| = 2$ and $|V(G_j)|\geq 5$ with $i,j\in\{1,2\}$
distinct.
\end{definition}

If $G$ is a $(v,G_1,G_2)$-polar-cat and thus, a pseudo-cograph, 
Def.\ \ref{def:pseudo-cograph}(P1) implies that 
$|V(G_1)|\geq 2$ and $|V(G_2)|\geq 2$. This, in particular, implies 
\begin{observation}\label{obs:well-prop}
Every polar-cat $G$ with $|V(G)|\geq 7$ is well-proportioned. 
\end{observation}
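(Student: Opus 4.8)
The plan is to reduce the statement to an elementary arithmetic inequality on the sizes $|V(G_1)|$ and $|V(G_2)|$. The crucial input is that, since $G$ is a $(v,G_1,G_2)$-pseudo-cograph, condition (P1) of Def.\ \ref{def:pseudo-cograph} guarantees both $V(G_1)\cap V(G_2)=\{v\}$ and $|V(G_i)|\geq 2$ for $i\in\{1,2\}$ (the latter fact is already recorded in the sentence preceding the observation). The exact-overlap condition yields the size identity $|V(G)| = |V(G_1)|+|V(G_2)|-1$, since the two vertex sets meet in precisely the single vertex $v$. Writing $a\coloneqq|V(G_1)|$ and $b\coloneqq|V(G_2)|$, the hypothesis $|V(G)|\geq 7$ then translates into $a+b\geq 8$, while (P1) gives $a,b\geq 2$.

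With this setup, I would split into two cases according to the two alternatives in the definition of \emph{well-proportioned}. If $a\geq 3$ and $b\geq 3$, then $G$ is well-proportioned by the first alternative and there is nothing further to show. Otherwise at least one of $a,b$ equals $2$; by the symmetry between $G_1$ and $G_2$ we may assume without loss of generality that $a=2$. Then $b = (a+b)-a \geq 8-2 = 6$, so in particular $\min\{a,b\}=2$ and $\max\{a,b\}=b\geq 5$, and $G$ satisfies the second alternative of the definition. In both cases $G$ is well-proportioned, and since the two cases exhaust all possibilities compatible with $a,b\geq 2$, the claim follows.

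There is no genuine obstacle here: the argument is a direct case split on an integer inequality. The only point that requires a moment of care is the size identity $|V(G)|=|V(G_1)|+|V(G_2)|-1$, which must invoke the exact-overlap clause $V(G_1)\cap V(G_2)=\{v\}$ of (P1) rather than merely the covering relation $V(G)=V(G_1)\cup V(G_2)$; otherwise the translation of the hypothesis $|V(G)|\geq 7$ into $a+b\geq 8$ would fail. I also note that conditions (C1)--(C3) of the polar-cat definition play no role beyond ensuring that $G$ is a pseudo-cograph in the first place, so the observation is in fact a statement about the admissible sizes of the two constituents alone.
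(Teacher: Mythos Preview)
Your proof is correct and follows exactly the approach the paper takes: the paper merely observes that (P1) forces $|V(G_1)|\geq 2$ and $|V(G_2)|\geq 2$ and then states that this implies the observation, leaving the arithmetic implicit. You have supplied precisely that arithmetic via the identity $|V(G)|=|V(G_1)|+|V(G_2)|-1$ and the ensuing case split, so there is nothing to add.
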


Well-proportioned polar-cats $G$ are of particular interest, since the galled-tree
   $(N,t)$ that explains $G$ is uniquely determined (cf.\ \cite[Prop.~6.13]{HS:22}).
We give now a mild extension of a characterization of polar-cats established in \cite{HS:22}.
\begin{theorem}\label{thm:CharPolCat}
	The following statements are equivalent.
	 \begin{enumerate}[noitemsep]
	 	\item $G$ is a polar-cat.
	 	\item $G$ is a primitive graph that can be explained by a labeled galled-tree. 
	 	\item $G$ is a primitive pseudo-cograph. 
	 \end{enumerate} 
	 Moreover, if $G$ is a well-proportioned $(v,G_1,G_2)$-polar-cat, 
	 then $v$ as well as $G_1$ and $G_2$ are uniquely determined, i.e., 
	 there is no other vertex $w\neq v$ in $G$ and no other subgraphs $G'_1$ and $G'_2$
	 than $G_1$ and $G_2$ such that $G$ is a $(w,G'_1,G'_2)$-polar-cat.

Furthermore, polar-cats $G = (V , E)$ can be recognized in $O( | V | + | E | )$
time. In the affirmative case, one can determine a vertex $v$ and subgraphs $G_1
, G_2 \subseteq G$ such that $G$ is a $( v, G_1 , G_2 )$-polar-cat and a labeled
galled-tree that explains $G$ in $O( | V | + | E | )$ time.
\end{theorem}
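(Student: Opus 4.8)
The plan is to prove the equivalences via the cyclic route $(1)\Rightarrow(3)$, $(3)\Rightarrow(1)$, $(3)\Rightarrow(2)$, $(2)\Rightarrow(3)$, and then to treat uniqueness and the algorithmic claim separately. The equivalence $(1)\Leftrightarrow(3)$ is, up to the present reformulation, the characterization of \cite{HS:22}, and I would re-derive it as follows. For $(1)\Rightarrow(3)$ a polar-cat is a pseudo-cograph by definition and (C1) gives $|V(G)|\geq 4$, so only primitivity needs checking; this follows from a case analysis of how a hypothetical non-trivial module $M$ meets $A\coloneqq V(G_1)\setminus\{v\}$, $B\coloneqq V(G_2)\setminus\{v\}$ and $\{v\}$, where (C2) forces $G-v$ to split into the two edge-disconnected parts $A,B$ and (C3) pins down the caterpillar structure, so that in each case the cherry containing $v$ supplies a vertex distinguishing the members of $M$. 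For $(3)\Rightarrow(1)$ I would use the key observation that, since $G$ is primitive, every non-trivial module of the cograph $G_i$ must contain $v$: a module of $G_i$ avoiding $v$ would remain a module of $G$ (vertices of the opposite side are uniformly non-adjacent to it by (P3), and $v$ is uniform on it since it is a module of $G_i$), contradicting primitivity. As the strong modules of a cograph form its cotree, ``every non-trivial module contains $v$'' forces each internal cotree node to have exactly two children, at most one of which is internal, i.e.\ a caterpillar, and forces $v$ into the unique cherry, which is (C3); connectivity of $G_1,G_2$ required for (C2) is forced because a disconnected $G_i$ would contribute a component $D\not\ni v$ that, being non-adjacent to everything outside itself in $G$, is again a non-trivial module.

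The genuinely new content is the equivalence of $(2)$ with $(1)$ and $(3)$, which I would establish through $(3)\Rightarrow(2)\Rightarrow(3)$. The implication $(3)\Rightarrow(2)$ is immediate: a primitive pseudo-cograph is explained by a labeled galled-tree by Theorem~\ref{thm:prop_pc}(4) and is primitive by assumption. For $(2)\Rightarrow(3)$ I would analyze an arbitrary labeled galled-tree $(N,t)$ with $G\simeq\mathscr{G}(N,t)$ and exploit primitivity to collapse $N$ to the canonical one-cycle form of Theorem~\ref{thm:prop_pc}(4). The tools are two module-producing facts: (i) for a tree-vertex $u\neq\rho_N$ the clade $L(N(u))$ is a module of $G$, and likewise the subtree pendant at any cycle-vertex and the subtree below a hybrid $\eta_C$ induce modules; and (ii) two leaves sharing a parent are twins and hence form a module. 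Primitivity then forces: each cycle-vertex carries at most one pendant leaf and $\eta_C$ carries exactly one leaf child, matching $\child_N(\eta_C)=\{x\}$ (else (i) or (ii) yields a non-trivial module); there is exactly one cycle with $\rho_C=\rho_N$ (a cycle with $\rho_C\neq\rho_N$ would make the clade $L(N(\rho_C))$ a non-trivial module, and two cycles sharing $\rho_N$ would make the leaves reachable through one of them a non-trivial module, as their pairwise $\lca$ with the rest is uniformly $\rho_N$). This is precisely the structure of Theorem~\ref{thm:prop_pc}(4), so $G$ is a pseudo-cograph, giving $(3)$.

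For the uniqueness statement, let $G$ be a well-proportioned $(v,G_1,G_2)$-polar-cat. I would invoke \cite[Prop.~6.13]{HS:22}: since $G$ is primitive, the analysis above shows that any polar-cat decomposition $(w,G'_1,G'_2)$ corresponds to the canonical one-cycle galled-tree explaining $G$, whose hybrid's unique leaf child determines $w=v$ and whose two pendant caterpillar paths determine $\{G'_1,G'_2\}=\{G_1,G_2\}$; well-proportionedness is exactly the hypothesis under which this explaining galled-tree, and hence the decomposition, is forced to be unique. Finally, for the algorithmic claim I would combine the linear-time modular decomposition algorithms (e.g.\ \cite{DGC:01,TCHP:08}) with Theorem~\ref{thm:prop_pc}: compute $(\MDT_G,t_G)$ and test whether its root is prime with all children singletons, i.e.\ whether $G$ is primitive, in $O(|V|+|E|)$ time; in the affirmative case $G$ is a polar-cat iff it is a pseudo-cograph, and the linear-time pseudo-cograph recognition of Theorem~\ref{thm:prop_pc} returns a triple $(v,G_1,G_2)$ together with an explaining labeled galled-tree, which by $(2)\Leftrightarrow(1)$ already constitutes a polar-cat decomposition.

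The step I expect to be the main obstacle is $(2)\Rightarrow(3)$, specifically the reduction of an arbitrary explaining galled-tree to the single-cycle form. The delicate point is that $G$ is only assumed to be explained by \emph{some} labeled galled-tree, which need not be the canonical one, so I must argue that \emph{every} explaining galled-tree collapses under primitivity; this hinges on verifying carefully, against the $\lca$-based adjacency rule and the hybrid-vertex conventions (N3)--(N4), that pendant subtrees, hybrid-subtrees, and cycle-root clades always induce modules of $G$.
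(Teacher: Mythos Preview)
Your proposal is broadly correct but takes a substantially longer route than the paper. The paper's proof is essentially three lines: the equivalence $(1)\Leftrightarrow(2)$ is cited directly from \cite[Thm.~6.9]{HS:22}; then $(3)\Rightarrow(2)$ follows from Theorem~\ref{thm:prop_pc} (pseudo-cographs are \gatex), and $(1)\Rightarrow(3)$ is immediate since a polar-cat is by definition a pseudo-cograph and primitivity comes from the already-established~$(2)$. Uniqueness and the linear-time claim are likewise cited from \cite[Thm.~6.12 and Thm.~9.3]{HS:22}. The theorem is explicitly billed as a ``mild extension'' of the characterization in \cite{HS:22}; the only new content is the addition of $(3)$, and that addition is trivial once $(1)\Leftrightarrow(2)$ is available.

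You instead re-derive $(1)\Leftrightarrow(3)$ from scratch via module arguments and then attempt $(2)\Rightarrow(3)$ by collapsing an arbitrary explaining galled-tree to the canonical one-cycle form. Two remarks. First, you misattribute what is in \cite{HS:22}: that paper gives $(1)\Leftrightarrow(2)$, not $(1)\Leftrightarrow(3)$; once you cite the former, your hard direction $(2)\Rightarrow(3)$ evaporates into $(2)\Rightarrow(1)\Rightarrow(3)$. Second, your direct $(2)\Rightarrow(3)$ argument is delicate exactly where you flag it: the assertion that $L(N(u))$ is a module for a tree-vertex $u$ fails when $u$ lies on a cycle (leaves pendant at $u$ and leaves below $\eta_C$ can have different $\lca$'s with a leaf on the opposite side of $C$), so the collapse has to be done more carefully than stated. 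Your $(1)\Leftrightarrow(3)$ module argument, by contrast, is essentially sound and would reconstruct part of \cite{HS:22}, but it is more effort than the present paper intends to spend here.
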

\begin{proof}
The equivalence between Condition (1) and (2) have been shown in \cite[Thm.\
6.9]{HS:22}. Suppose that (3) $G$ is a primitive pseudo-cograph. By Thm.\
\ref{thm:prop_pc}, $G$ can be explained by a labeled galled-tree and Condition
(2) follows. Finally, Condition (1) and its equivalent expression Condition (2)
together with the definition of polar-cats imply that $G$ is a primitive
pseudo-cograph. The second statement is equivalent to \cite[Thm.\ 6.12]{HS:22},
and the last part is stated in \cite[Thm.\ 9.3]{HS:22}. 
\end{proof}

A convenient characterization of the edges in primitive pseudo-cographs
is provided by
\begin{proposition}[{\cite[Prop.~6.2]{HS:22}}]\label{prop:polcat-edges}
A graph $G =(V,E)$ is a primitive pseudo-cograph if and only if
$|V|\geq 4$ and there exists a vertex $v \in V$ and two ordered sets $Y=\{y_1, \ldots, y_{\ell-1},
y_{\ell}=v\}$ and $Z=\{z_1, \ldots, z_{m-1}, z_{m}=v\}$, $\ell,m \geq 2$ such that $Y \cap
Z=\{v\}$, $Y \cup Z=V$, and one of the following conditions hold:
\begin{itemize}
	\item[(a)] $G[Y]$ and $G[Z]$ are connected and the edges of $G$ are 
		\begin{itemize}[noitemsep, nolistsep]
			\item[(I)] $\{y_i,y_j\}$,		where $1 \leq i<j \leq \ell$ and $i$ is odd; and
			\item[(II)] $\{z_i,z_j\}$, where $1 \leq i<j \leq		m$ and $i$ is odd.
		\end{itemize}
		
		In this case, $G$ is a slim $(v,G[Y],G[Z])$-polar-cat.
	\item[(b)] $G[Y]$ and $G[Z]$ are disconnected and the edges of $G$ are 
		\begin{itemize}[noitemsep, nolistsep]
			\item[(I)] $\{y_i,y_j\}$, where $1 \leq i<j \leq \ell$ and $i$ is even; and
			\item[(II)] $\{z_i,z_j\}$, where $1 \leq i<j \leq m$ and $i$ is even; and 
			\item[(III)] $\{y,z\}$ for all $y\in Y\setminus\{v\}$ and $z\in Z\setminus\{v\}$ 
	  \end{itemize}
	  
	  		In this case, $G$ is a fat $(v,G[Y],G[Z])$-polar-cat.
	\end{itemize}
\end{proposition}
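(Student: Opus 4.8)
The plan is to reduce the statement to the structure of polar-cats via Theorem~\ref{thm:CharPolCat} and then to read off the edge set from the caterpillars that explain the two constituent cographs. By Theorem~\ref{thm:CharPolCat}, a graph $G$ is a primitive pseudo-cograph if and only if it is a polar-cat. Hence it suffices to show that a $(v,G_1,G_2)$-polar-cat has exactly the claimed edge structure and, conversely, that any $G$ admitting such an ordered pair $(Y,Z)$ with the stated edge rule is a polar-cat. I would treat the slim case (a) in detail and obtain the fat case (b) by complementation.

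For the forward direction in the slim case I would analyse a caterpillar that explains the connected cograph $G_1$. Writing $u_1,\dots,u_{\ell-1}$ for the inner vertices along the spine with $u_1=\rho$ the root, the bottom vertex $u_{\ell-1}$ carries the cherry. Since $v$ is part of this cherry, I set $y_\ell=v$, let $y_{\ell-1}$ be the other cherry leaf, and let $y_i$ (for $1\le i\le \ell-2$) be the leaf attached to $u_i$. Because $G_1$ is a connected cograph its cotree has a series root, so $t(u_1)=1$, and since no internal vertex of a cotree shares its label with its parent the labels must alternate, giving $t(u_i)=1$ exactly when $i$ is odd. The key computation is that for $i<j$ one has $\lca(y_i,y_j)=u_i$, whence $\{y_i,y_j\}\in E$ iff $i$ is odd; this yields (a)(I), and the identical argument applied to $G_2$ yields (a)(II). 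Finally, since $G$ is slim, $G-v$ is the disjoint union of $G_1-v$ and $G_2-v$, so there are no edges between $Y\setminus\{v\}$ and $Z\setminus\{v\}$, matching (a).

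The fat case (b) I would obtain from (a) by complementation rather than by repeating the argument. By Observation~\ref{obs:complement-galled-tree} together with the complement-closure of pseudo-cographs in Theorem~\ref{thm:prop_pc}, $\overline G$ is again a primitive pseudo-cograph and is a $(v,\overline{G_1},\overline{G_2})$-polar-cat whose constituents are disconnected, so it is fat; complementing the edge rule of (a) turns ``$i$ odd'' into ``$i$ even'' within each of $Y$ and $Z$ and turns the empty set of cross edges into the full set $\{y,z\}$, giving exactly (b)(I)--(III). For the converse direction I would run the construction backwards: given the ordered sets and the edge rule of (a), build caterpillars on $Y$ and $Z$ with alternating labels starting from $t(\rho)=1$ and with $v$ in the bottom cherry, verify via the same $\lca$ computation that they explain $G[Y]$ and $G[Z]$ as connected cographs, check that $G-v$ is the disjoint union of $G_1-v$ and $G_2-v$, and conclude from conditions (C1)--(C3) together with Theorem~\ref{thm:CharPolCat} that $G$ is a primitive pseudo-cograph; case (b) again follows by complementation.

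The main obstacle is the bookkeeping in the central $\lca$ computation: one must verify carefully that for $i<j$ the lowest common ancestor of $y_i$ and $y_j$ is precisely the spine vertex $u_i$ to which $y_i$ is attached, including the degenerate boundary cases $\ell=2$ or $m=2$, where the caterpillar collapses to a single cherry, and the cherry case $j=\ell$. One must also argue that the alternation of labels along the spine is forced by the cotree structure rather than merely assumed. Once this correspondence between spine position, label parity, and adjacency is pinned down, both the edge enumeration and the identification of the slim/fat dichotomy with the connected/disconnected dichotomy follow directly.
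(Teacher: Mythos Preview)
The paper does not prove this statement at all: Proposition~\ref{prop:polcat-edges} is quoted verbatim from \cite[Prop.~6.2]{HS:22} and used as a black box, so there is no in-paper proof to compare your proposal against.

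That said, your argument is essentially correct in its core: the identification of the spine vertices $u_1,\dots,u_{\ell-1}$ of the caterpillar explaining $G_1$, the computation $\lca(y_i,y_j)=u_i$ for $i<j$, and the forced alternation of labels along the spine (since the modular decomposition tree of a cograph never has two adjacent inner vertices with the same label) together give exactly the parity rule for edges. The reduction of case~(b) to case~(a) by complementation via Theorem~\ref{thm:prop_pc} is also clean and avoids duplication.

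One caution: you lean on Theorem~\ref{thm:CharPolCat} in both directions to pass between ``primitive pseudo-cograph'' and ``polar-cat''. In the present paper that theorem is itself imported from \cite[Thm.~6.9]{HS:22}, and since Proposition~6.2 precedes Theorem~6.9 in \cite{HS:22}, there is a real risk that the original proof of Theorem~6.9 already uses Proposition~6.2. If so, your converse direction (edge rule $\Rightarrow$ primitive) would be circular. The fix is easy: once you have the explicit edge description of case~(a), verify primitivity directly. For instance, if $M$ is a non-trivial module containing $y_i$ with $i$ minimal, then comparing neighbourhoods of $y_i$ and any other element of $M$ against $y_{i+1}$ (or $z_1$, if $M$ meets both sides) forces $M=V$; the argument is short and avoids any dependence on Theorem~\ref{thm:CharPolCat} for that implication.
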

 
As for general pseudo-cographs, the choice of $G_1$, $G_2$ and $v$ may not be
unique even for polar-cats (as example, consider again an induced $P_4$).
However, by Thm.\ \ref{thm:CharPolCat}, for all well-proportioned
$(v,G_1,G_2)$-polar-cats, $G_1$, $G_2$ and $v$ are uniquely determined. This and
the fact that $G_1$ and $G_2$ are induced subgraphs implies that the sets $Y$,
$Z$ and vertex $v$ as specified in Prop.\ \ref{prop:polcat-edges} are uniquely
determined for well-proportioned polar-cats. Thm.\ \ref{thm:CharPolCat},
can be used to obtain a slightly rephrased and simplified version of Thm.\ 7.5
in \cite{HS:22}. 

\begin{theorem}\label{thm:GalledTreeExplainable}
A graph $G$ is \gatex  if and only if 
$G[M]/\Mmax(G[M])$ is a (primitive) pseudo-cograph for all 
prime modules $M$ of $G$.

Moreover, it can be verified in $O( | V | + | E | )$ time if a given graph $G =
(V , E)$ is \gatex and, in the affirmative case, a labeled galled-tree $(N , t)$
that explains $G$ can be constructed within the same time complexity.
\end{theorem}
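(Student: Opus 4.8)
The plan is to derive both the characterization and the algorithmic statement from Theorem~\ref{thm:CharPolCat} together with the basic structure theory of the modular decomposition, treating the equivalence as the translation of \cite[Thm.~7.5]{HS:22} into the language of primitive pseudo-cographs. The two substantive tasks are (i) showing that each prime quotient of a \gatex graph is itself explainable by a labeled galled-tree, and (ii) assembling the galled-trees of the prime quotients into one network explaining $G$.

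For the forward direction I would argue as follows. Suppose $G$ is \gatex and let $M$ be a prime module of $G$. Since $M$ is prime, the standard fact that the quotient by the inclusion-maximal strong modules has only trivial modules shows that $G[M]/\Mmax(G[M])$ is primitive. By Observation~\ref{obs:quotient} this quotient is isomorphic to an induced subgraph $G[W]$ of $G$, so I first establish that \gatex graphs are closed under taking induced subgraphs: given a labeled galled-tree $(N,t)$ explaining $G$ and a vertex subset $W$, restricting $N$ to the leaves in $W$ (passing to the minimal connecting subnetwork and suppressing the resulting degree-two vertices) yields a labeled galled-tree that explains $G[W]$, because $\lca_N(x,y)$ and its label are preserved for every pair $x,y\in W$. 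Hence $G[M]/\Mmax(G[M])$ is a primitive \gatex graph, and the equivalence of Conditions (1)--(3) in Theorem~\ref{thm:CharPolCat} shows it is a primitive pseudo-cograph.

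For the converse, suppose $G[M]/\Mmax(G[M])$ is a primitive pseudo-cograph for every prime module $M$. I would build a labeled galled-tree explaining $G$ from the labeled MDT $(\MDT_G,\tau_G)$: every series (resp.\ parallel) vertex keeps its label $1$ (resp.\ $0$), and every prime vertex $M$ is replaced by the cycle $C_M$ of the labeled galled-tree that explains the polar-cat $G[M]/\Mmax(G[M])$ (Theorem~\ref{thm:CharPolCat}), rewiring the $k=|\Mmax(G[M])|$ children of $M$ to the appropriate vertices of $C_M$ as prescribed by Proposition~\ref{prop:polcat-edges}. It then remains to verify $\mathscr{G}(N,t)\simeq G$, which reduces, for each pair $x,y$, to the inclusion-minimal strong module $M^{*}$ containing both: if $M^{*}$ is series or parallel the claim is immediate from the label of $M^{*}$, and if $M^{*}$ is prime then $\lca_N(x,y)$ lies on $C_{M^{*}}$ and its label matches adjacency in the quotient by construction, hence adjacency in $G$.

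The crux, and the step I expect to be most delicate, is checking that the cycle-replacement produces a genuine galled-tree satisfying (N1)--(N4) and that the $\lca$-labels along each inserted cycle faithfully reproduce the edges of the corresponding quotient; this is exactly where Proposition~\ref{prop:polcat-edges} does the work. For the complexity bounds, the labeled MDT is computable in $O(|V|+|E|)$ time, and by Theorem~\ref{thm:CharPolCat} testing each quotient $G[M]/\Mmax(G[M])$ for being a primitive pseudo-cograph, and constructing its explaining cycle in the affirmative case, is linear in the size of that quotient; since the total number of vertices and edges over all quotient graphs of the modular decomposition is $O(|V|+|E|)$, both the recognition of $G$ and the construction of $(N,t)$ run in linear time overall.
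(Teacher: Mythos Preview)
Your proposal is correct and aligns with the paper's treatment. The paper does not give an independent proof of this theorem; it simply states that the characterization is a ``slightly rephrased and simplified version of Thm.~7.5 in \cite{HS:22}'' obtained via Theorem~\ref{thm:CharPolCat}, and that the algorithmic claim is equivalent to \cite[Thm.~9.4]{HS:22}. Your plan makes exactly this translation explicit: you use Theorem~\ref{thm:CharPolCat} to pass between ``polar-cat'' and ``primitive pseudo-cograph'' and otherwise reproduce the argument structure of \cite{HS:22} (heredity of \gatex under induced subgraphs, the quotient-is-an-induced-subgraph observation, and the prime-vertex replacement construction that the paper later formalizes as the pvr-network in Definition~\ref{def:pvr}).

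One small remark: your inline sketch of heredity (restrict $N$ to the leaves in $W$, suppress degree-two vertices, and claim $\lca$-labels are preserved) is the right intuition but is not quite a proof as stated for galled-trees, since suppression can in principle remove the vertex that served as $\lca_N(x,y)$; the paper sidesteps this by citing the result directly as \cite[Lemma~7.1]{HS:22} (appearing here as Lemma~\ref{lem:gt-heri}). Since you already frame your argument as a translation of \cite[Thm.~7.5]{HS:22}, simply invoking that lemma rather than re-deriving it would be cleaner and fully in line with the paper.
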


The latter statement of Theorem \ref{thm:GalledTreeExplainable} is equivalent to
\cite[Thm.\ 9.4]{HS:22}. In \cite{HS:22}, the class of polar-cats has been
denoted by \PolarCat\ and the class of \gatex graphs by \PrimeCat. The latter is
due to Theorem \ref{thm:GalledTreeExplainable} and Theorem \ref{thm:CharPolCat}
which imply that $G[M]/\Mmax(G[M])$ is a polar-cat \PolarCat\ for every prime
module $M$ of a \gatex graph $G$. Moreover, it has been shown in \cite{HS:22}
that every labeled galled-tree $(N,t)$ that explains $G$ can be obtained from
the MDT of $G$ by replacing each vertex labeled as $\mathit{prime}$ by a
suitable choice of a single 0/1-labeled cycle. The latter is justified by
\cite[Prop.\ 3.8]{HS:22} which shows that for any labeled galled-tree $(N,t)$
that explains $G$ and that does not contain weak cycles, there is a 1-to-1
correspondence between the cycles in $N$ and the prime modules of $G$. Suitable
choices to replace respective prime modules $M$ of $G$ can be obtained as
follows: Construct first $H\coloneqq G[M]/\Mmax(G[M])$. Check if $H$ is a
polar-cat and determine a vertex $v\in V(H_1)$ as well as induced subgraphs
$H_1, H_2\sqsubseteq H$ such that $H$ is a $(v,H_1,H_2)$-polar-cat. By Theorem
\ref{thm:CharPolCat}, the latter task can be done in linear-time. By definition,
$H-v$ is either the join or the disjoint union of the cographs $H_1-v$ and
$H_2-v$. Note that the two cotrees $(T_1,t_1)$ and $(T_2,t_2)$ that explain
$H_1-v$ and $H_2-v$, respectively, must be caterpillars and $v$ is part of a cherry in $(T_1,t_1)$ as
well as in $(T_2,t_2)$. To construct $(N,t)$ that explains $H$, (1) add a root
$\rho_N$ and $(T_1,t_1)$ and $(T_2,t_2)$ to $N$, (2) add edges $\{\rho_N,
\rho_{T_1}\}$ and $\{\rho_N, \rho_{T_2}\}$ and (3) identify $v$ in $(T_1,t_1)$
and $(T_2,t_2)$ to obtain a vertex $v'$ and (4) add an edge $\{v',v\}$. Finally
keep the labels of all vertices in $T_1$ and $T_2$, label $v'$ arbitrarily and
$\rho_N$ by either ``1'' or ``0'' depending on whether $H$ is fat or slim. Now
replace the prime module $M$ in $G$ by the unique cycle in $N$; see \cite{HS:22}
for further details.

\section{Forbidden Subgraph Characterization}

In this section, we show that $\gatex$ graphs are characterized by a set of 
25 forbidden subgraphs. To this end, we  first provide the following

\begin{lemma}[{\cite[Lemma 7.1]{HS:22}}]\label{lem:gt-heri}
 A graph $G$ is \gatex  if and only if every induced subgraph of $G$ is \gatex.
\end{lemma}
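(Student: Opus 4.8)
The plan is to prove the nontrivial forward implication; the converse is immediate, since $G$ is an induced subgraph of itself. The strategy is to reduce heredity to the modular-decomposition characterization of Theorem~\ref{thm:GalledTreeExplainable}, which states that a graph is \gatex exactly when the quotient $G[M]/\Mmax(G[M])$ of each of its prime modules $M$ is a primitive pseudo-cograph. Thus, assuming $G$ is \gatex, I fix an induced subgraph $G'\sqsubseteq G$ and aim to verify this condition for $G'$, i.e., to show that for every prime module $M'$ of $G'$ the quotient $H'\coloneqq G'[M']/\Mmax(G'[M'])$ is a primitive pseudo-cograph.

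The key is to transport such a quotient of $G'$ back into the structure of $G$. Since $M'$ is prime, $H'$ is a primitive graph, and by Observation~\ref{obs:quotient} it is isomorphic to an induced subgraph of $G'$, hence to an induced primitive subgraph $H$ of $G$. Applying Lemma~\ref{lem:Hprimitive-in-primeM} to $H\sqsubseteq G$ then yields a prime module $M$ of $G$, inclusion-minimal with $V(H)\subseteq M$, such that $H$ is isomorphic to an induced subgraph of $G[M]/\Mmax(G[M])$. Because $G$ is \gatex, Theorem~\ref{thm:GalledTreeExplainable} guarantees that $G[M]/\Mmax(G[M])$ is a primitive pseudo-cograph.

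It remains to invoke the heredity of pseudo-cographs, Theorem~\ref{thm:prop_pc}(3): every induced subgraph of a pseudo-cograph is again a pseudo-cograph. Hence $H$, being isomorphic to an induced subgraph of the pseudo-cograph $G[M]/\Mmax(G[M])$, is itself a pseudo-cograph, and since $H$ is primitive it is a \emph{primitive} pseudo-cograph. Therefore $H'\simeq H$ is a primitive pseudo-cograph, which is exactly the condition Theorem~\ref{thm:GalledTreeExplainable} requires for the prime module $M'$ of $G'$; as $M'$ was arbitrary, $G'$ is \gatex.

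The main obstacle is conceptual rather than computational: the prime modules of an induced subgraph $G'$ need not correspond to prime modules of $G$, so one cannot argue directly on the modular decomposition of $G$ itself. The two bridging results—Observation~\ref{obs:quotient}, which realizes a prime quotient as a genuine induced (primitive) subgraph, and Lemma~\ref{lem:Hprimitive-in-primeM}, which locates every induced primitive subgraph inside the quotient of \emph{some} prime module of $G$—are precisely what is needed to close this gap and push the \gatex property down to all induced subgraphs.
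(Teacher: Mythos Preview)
Your proof is correct. Note, however, that the paper does not actually prove this lemma: it is imported verbatim from \cite[Lemma~7.1]{HS:22} and used as a black box. So there is no ``paper's own proof'' to compare against here.

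That said, your argument is exactly in the spirit of the surrounding material. The same toolkit---Observation~\ref{obs:quotient}, Lemma~\ref{lem:Hprimitive-in-primeM}, Theorem~\ref{thm:GalledTreeExplainable}, and the heredity of pseudo-cographs from Theorem~\ref{thm:prop_pc}(3)---is precisely what the paper assembles a few lines later to prove Theorem~\ref{thm:GatexIFFallPrimitive=PsC}. In fact, once that theorem is in hand, Lemma~\ref{lem:gt-heri} becomes a two-line corollary: if $G$ is \gatex, every primitive induced subgraph of $G$ is a pseudo-cograph; every primitive induced subgraph of $G'\sqsubseteq G$ is also a primitive induced subgraph of $G$, hence a pseudo-cograph; so $G'$ is \gatex. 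Your version unwinds this same implication directly at the level of prime modules, which is perfectly valid and avoids any dependence on the ordering of statements in the paper.
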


By Lemma \ref{lem:gt-heri}, the property of a graph being galled-tree
explainable is heritable and Lemma \ref{lem:heri-forb} implies that the class of
\gatex graphs can be characterized by means of a not necessarily finite set of
forbidden induced subgraphs. In the following, let $\forbGT$ denote the
(currently unknown) set of minimal forbidden induced subgraphs that
characterizes \gatex graphs. With \emph{minimal} we mean that if $F\in \forbGT$,
then $F$ is not \gatex, while $F-x$ is \gatex for all $x\in V(F)$. In what
follows, we will define the set $\forbGT$ in full detail. To this end, we
provide first a new characterization of \gatex graphs. 
\begin{theorem}	\label{thm:GatexIFFallPrimitive=PsC}
 $G$ is \gatex if and only if all primitive induced subgraphs of $G$ are pseudo-cographs.
\end{theorem}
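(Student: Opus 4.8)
The plan is to establish both implications using the structural characterization of \gatex graphs in Theorem~\ref{thm:GalledTreeExplainable}, which tells us that $G$ is \gatex precisely when $G[M]/\Mmax(G[M])$ is a primitive pseudo-cograph for every prime module $M$ of $G$, together with the heritability of the \gatex property (Lemma~\ref{lem:gt-heri}) and the heritability of the pseudo-cograph property (Theorem~\ref{thm:prop_pc}, item~3).

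For the forward direction, suppose $G$ is \gatex and let $H\sqsubseteq G$ be any primitive induced subgraph. By Lemma~\ref{lem:gt-heri}, $H$ is itself \gatex. Since $H$ is primitive, it has only trivial modules, so the only prime module of $H$ is $V(H)$ itself, and $\Mmax(H)$ consists of the singletons. Thus $H/\Mmax(H)\simeq H$, and applying Theorem~\ref{thm:GalledTreeExplainable} to $H$ yields that $H=H[V(H)]/\Mmax(H[V(H)])$ is a (primitive) pseudo-cograph. Hence every primitive induced subgraph of $G$ is a pseudo-cograph.

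For the converse, suppose every primitive induced subgraph of $G$ is a pseudo-cograph; I want to verify the criterion of Theorem~\ref{thm:GalledTreeExplainable}, i.e.\ that $Q_M\coloneqq G[M]/\Mmax(G[M])$ is a primitive pseudo-cograph for every prime module $M$ of $G$. Fix a prime module $M$. Since $M$ is prime, both $G[M]$ and its complement are connected, so $Q_M$ is a primitive graph (this is the standard fact that the quotient of a graph by its maximal strong modules reflects the label of the root of the modular decomposition, and a prime root gives a primitive quotient). The key point is then to realize $Q_M$ as an \emph{induced} subgraph of $G$: by Observation~\ref{obs:quotient}, $Q_M$ is isomorphic to $G[W]$ for any transversal $W$ picking exactly one vertex from each module in $\Mmax(G[M])$. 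This $G[W]$ is an induced subgraph of $G$ that is primitive (being isomorphic to $Q_M$), so by hypothesis it is a pseudo-cograph, and therefore $Q_M$ is a primitive pseudo-cograph. Since this holds for every prime module $M$, Theorem~\ref{thm:GalledTreeExplainable} gives that $G$ is \gatex.

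The only subtlety, and the step I expect to require the most care, is the claim that the quotient $Q_M$ of a prime module $M$ is indeed primitive (so that the hypothesis about primitive subgraphs applies to it via Observation~\ref{obs:quotient}); this follows from the general theory of modular decomposition, namely that the modules of a quotient $G[M]/\Mmax(G[M])$ correspond to unions of parts, so that $Q_M$ has only trivial modules precisely when $M$ is prime. Everything else is a direct bookkeeping application of Theorem~\ref{thm:GalledTreeExplainable}, Observation~\ref{obs:quotient}, and the two heritability statements.
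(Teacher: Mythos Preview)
Your proof is correct. The \emph{if} direction is essentially identical to the paper's argument: realize the quotient $G[M]/\Mmax(G[M])$ as an induced subgraph of $G$ via Observation~\ref{obs:quotient}, observe it is primitive, and apply the hypothesis together with Theorem~\ref{thm:GalledTreeExplainable}.

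Your \emph{only-if} direction, however, takes a genuinely different route from the paper's. The paper argues by contraposition: given a primitive induced subgraph $H$ that is not a pseudo-cograph, it invokes Lemma~\ref{lem:Hprimitive-in-primeM} to locate an inclusion-minimal strong module $M\supseteq V(H)$, show that $M$ is prime, and that $H$ embeds into $G[M]/\Mmax(G[M])$; then heritability of the pseudo-cograph property (Theorem~\ref{thm:prop_pc}) forces this quotient to be a non-pseudo-cograph, so Theorem~\ref{thm:GalledTreeExplainable} fails for $G$. You instead proceed directly: from $G$ \gatex and Lemma~\ref{lem:gt-heri} you get $H$ \gatex, and since $H$ is primitive its only prime module is $V(H)$ with $H/\Mmax(H)\simeq H$, so Theorem~\ref{thm:GalledTreeExplainable} applied to $H$ itself yields that $H$ is a pseudo-cograph. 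Your approach is shorter and avoids Lemma~\ref{lem:Hprimitive-in-primeM} entirely; the paper's approach has the minor advantage of staying inside the modular decomposition of $G$ rather than passing to that of $H$, but both are perfectly valid.
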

\begin{proof}
	By contraposition, assume that $G$ contains a primitive
	induced subgraph $H=(W,E)$ that is not a pseudo-cograph
	and let $M$ be an inclusion-minimal module that contains $V(H)$. 
	By Lemma \ref{lem:Hprimitive-in-primeM}, $M$ is prime
	and $H$ is isomorphic to an induced subgraph of $G[M]/\Mmax(G[M])$.
	By Thm.\ \ref{thm:prop_pc}, the property of being a pseudo-cograph is heritable.
	The latter arguments imply that $G[M]/\Mmax(G[M])$
	cannot be a pseudo-cograph. 
	By Thm.\ \ref{thm:GalledTreeExplainable}, 
	$G$ is not \gatex. 
	This establishes the \emph{only-if} direction. 
	
	Assume now that all primitive induced subgraphs of $G$ (if there are any) 
	are pseudo-cographs. Note that if $G$ does not contain prime modules at all, 
	then Thm.\ \ref{thm:GalledTreeExplainable} implies that $G$
	can be explained by a labeled galled-tree. Hence, assume that $G$
	contains a prime module $M$. By Obs.\ \ref{obs:quotient}, 
	$G[M]/\Mmax(G[M])\simeq H$ where $H$ is an induced subgraph of $G$. 
	In particular, $H$ is primitive. By assumption, $H$ is a pseudo-cograph.
	Since the latter argument holds for all prime modules of $G$, 
	Thm.\ \ref{thm:GalledTreeExplainable} implies that $G$ is \gatex. 
	This establishes the \emph{if} direction. 
\end{proof}	

The following lemma shows that the elements in $\forbGT$ are precisely the 
   minimal primitive non-pseudo-cographs.

\begin{lemma}\label{lem:forb->primitiveNOTPC}
	It holds that $F\in \forbGT$ if and only if $F$ is primitive and not a pseudo-cograph
	and there is no $F'\in \forbGT$ such that $|V(F')|<|V(F)|$ and $F'\sqsubseteq F$.
\end{lemma}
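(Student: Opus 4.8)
The plan is to unfold the definition of $\forbGT$ and then translate the condition ``$F$ is not \gatex'' through Theorem~\ref{thm:GatexIFFallPrimitive=PsC}. Recall that, by the minimality convention, $F\in\forbGT$ means that $F$ is not \gatex while $F-x$ is \gatex for every $x\in V(F)$; by Lemma~\ref{lem:gt-heri} (heritability) this is equivalent to saying that $F$ is not \gatex but every proper induced subgraph of $F$ is \gatex. I would use this reformulation freely together with Theorem~\ref{thm:GatexIFFallPrimitive=PsC}, which states that a graph is \gatex precisely if all of its primitive induced subgraphs are pseudo-cographs.

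For the \emph{only-if} direction, suppose $F\in\forbGT$. Since $F$ is not \gatex, Theorem~\ref{thm:GatexIFFallPrimitive=PsC} yields a primitive induced subgraph $H\sqsubseteq F$ that is not a pseudo-cograph. First I would argue that $H=F$: if $H$ were a proper induced subgraph of $F$, then $H$ would be \gatex by minimality of $F$; but $H$ is itself a primitive induced subgraph of $H$ that is not a pseudo-cograph, so Theorem~\ref{thm:GatexIFFallPrimitive=PsC} applied to $H$ shows that $H$ is not \gatex, a contradiction. Hence $F=H$ is primitive and not a pseudo-cograph. It then remains to rule out a smaller forbidden subgraph: if some $F'\in\forbGT$ satisfied $|V(F')|<|V(F)|$ and $F'\sqsubseteq F$, then $F'$ would be a proper induced subgraph of $F$ and hence \gatex, contradicting $F'\in\forbGT$.

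For the \emph{if} direction, assume $F$ is primitive, not a pseudo-cograph, and minimal in the stated sense. That $F$ is not \gatex is immediate from Theorem~\ref{thm:GatexIFFallPrimitive=PsC}, since $F$ is a primitive induced subgraph of itself that is not a pseudo-cograph. To conclude $F\in\forbGT$, I must show that every proper induced subgraph of $F$ is \gatex. Suppose not; then $F$ admits a proper induced subgraph that is not \gatex, and among all such subgraphs I may pick one $F'$ with the fewest vertices. Then $F'$ is not \gatex, while every proper induced subgraph of $F'$---being a proper induced subgraph of $F$ with still fewer vertices---is \gatex by the minimal choice of $F'$; thus $F'\in\forbGT$. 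But $F'\sqsubseteq F$ with $|V(F')|<|V(F)|$, contradicting the assumption that no such $F'$ exists. Hence every proper induced subgraph of $F$ is \gatex and $F\in\forbGT$.

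The argument is mostly a careful bookkeeping of the two equivalent descriptions, so there is no single hard computation. The step demanding the most care is the identification $H=F$ in the \emph{only-if} direction: one must invoke Theorem~\ref{thm:GatexIFFallPrimitive=PsC} not only for $F$ but also for the extracted subgraph $H$, so as to conclude that a primitive non-pseudo-cograph is \emph{never} \gatex; this is what forces $H$ to coincide with $F$ rather than being a proper---and therefore \gatex---induced subgraph.
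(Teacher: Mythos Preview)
Your proof is correct and follows essentially the same strategy as the paper's: unfold the definition of $\forbGT$ as the set of minimal non-\gatex graphs and translate ``not \gatex'' through the primitive-subgraph characterization. The only tactical difference is that, to establish primitivity of $F$ in the \emph{only-if} direction, the paper detours through Theorem~\ref{thm:GalledTreeExplainable} and Observation~\ref{obs:quotient} (extracting a primitive quotient from a prime module), whereas you apply Theorem~\ref{thm:GatexIFFallPrimitive=PsC} directly to obtain a primitive non-pseudo-cograph $H\sqsubseteq F$ and then force $H=F$ by minimality; your route is slightly more streamlined.
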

\begin{proof}
    Let $F\in \forbGT$. By definition, $F$ is not \gatex. However, since every
     pseudo-cograph is \gatex, it follows that $F$ cannot be a pseudo-cograph.
     Assume now, for contradiction, that $F$ is not primitive. Since $F$ is not
     \gatex, Thm.\ \ref{thm:GalledTreeExplainable} implies that there is a
     prime module $M$ of $F$ such that $ F[M]/\Mmax(F[M])$ is not a
     pseudo-cograph. By
     Obs.\ \ref{obs:quotient}, $ F[M]/\Mmax(F[M])$ is isomorphic to an induced
     subgraph $H$ of $F$. In particular, $V(H)$ is a prime module of $H$ and,
     since $H$ is primitive but not a pseudo-cograph, Thm.\
     \ref{thm:GalledTreeExplainable} implies that $H$ is not \gatex. This
     and the fact that $\forbGT$ characterizes \gatex graphs implies that there is an induced subgraph $F'\in \forbGT$ such that
     $F'\subseteq H$. Note since $F$ is not primitive but $H\subseteq F$ is, it
     follows that $H\subsetneq F$. Therefore, $F'\subseteq H\subsetneq F$
     implies that $F$ is not minimal and, hence, $F\notin \forbGT$; a
     contradiction. In summary, all $F\in \forbGT$ are primitive and not
     pseudo-cographs. Assume now, for contradiction, that there is an $F'\in
     \forbGT$ such that $|V(F')|<|V(F)|$ and $F'\sqsubseteq F$. By the latter
     arguments, $F'$ is primitive and not a pseudo-cograph. In particular,
     $F'\sqsubseteq F-x$ for some $x\in V(F)$. By Thm.\
     \ref{thm:GatexIFFallPrimitive=PsC}, $F-x$ is not \gatex; a contradiction to
     minimality of $F$. 
	
	For the converse, assume that $F$ is primitive and not a pseudo-cograph and
	that there is no $F'\in \forbGT$ such that $|V(F')|<|V(F)|$ and
	$F'\sqsubseteq F$. Hence, for all $x\in V(F)$, it holds that $F-x$ does not
	contain any of the graphs in $\forbGT$ as an induced subgraph. Since
	$\forbGT$ is the set of (minimal) forbidden induced subgraphs that
	characterizes \gatex graphs, it follows that $F-x$ is \gatex for all $x\in
	V(F)$. By Thm.\ \ref{thm:GatexIFFallPrimitive=PsC}, $F$ is not \gatex. By
	the latter arguments, $F$ is minimal and thus, $F\in \forbGT$.
\end{proof}

\begin{algorithm}[tb] 
\small 
  \caption{\texttt{Construct $\forbGT$}}
\label{alg:construct-F}
\begin{algorithmic}[1]
  \Require  The sets $\mathfrak{G}_i$ of all graphs on $i\in\{5,\dots,8\}$ vertices 
  \Ensure 	Set $\forbGT$ of forbidden subgraphs on $5$ to $8$ vertices 
  \State $\forbGT\gets \emptyset$ 
  \While{$i\leq 8$}
	  \ForAll{$G\in \mathfrak{G}_i$} 
		\State forbidden=\texttt{true}
	  	\If{$G$ is primitive and not a pseudo-cograph} \Comment{Pseudo-cograph recognition with Alg.\ 2 in \cite{HS:22}} \label{l:primNotPC}
				\ForAll{$H\in \forbGT$ with $|V(H)|<|V(G)|$} \label{for1}
					\If{$H\sqsubseteq G$} \label{if1}
						\State forbidden=\texttt{false} 
					\EndIf
				\EndFor
				\If{forbidden} \label{if2}
					\State 	$\forbGT\gets \forbGT\cup\{G\}$. \label{add}
	 	 		\EndIf	 \label{if22}
	 	\EndIf	
	  \EndFor
	\State $i\gets i+1$
  \EndWhile
  \State \Return  $\forbGT$		 
\end{algorithmic}
\end{algorithm}

By Lemma \ref{lem:forb->primitiveNOTPC}, any forbidden subgraph $F\in \forbGT$
of \gatex graphs must be primitive. Hence, $|V(F)|\geq 4$. Since, however, an
induced $P_4$ is a pseudo-cograph and no other graph on four vertices is
primitive, we can conclude that $|V(F)|\geq 5$. Furthermore, a
$(v,G_1,G_2)$-pseudo-cograph $G$ has, in particular, the property that $v$ must
be located on every induced $P_4$. By Lemma \ref{lem:forb->primitiveNOTPC},
$F\in \forbGT$ cannot be a pseudo-cograph. By the latter two arguments, a
putative worst-case for the number of vertices of a forbidden subgraph in
$\forbGT$ are graphs that contain two vertex-disjoint $P_4$s. The latter leads
to the conjecture that $|V(F)|\leq 8$. We summarize the latter discussion into
the following
\begin{conjecture}
For all $F\in \forbGT$, we have $5\leq |V(F)|\leq 8$.  
\end{conjecture}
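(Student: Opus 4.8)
The lower bound is essentially already contained in the preceding discussion, and I would only record it cleanly: by Lemma~\ref{lem:forb->primitiveNOTPC} every $F\in\forbGT$ is primitive, so $|V(F)|\geq 4$, and since the unique primitive graph on four vertices is $P_4$ (which is a pseudo-cograph), no element of $\forbGT$ can have exactly four vertices, giving $|V(F)|\geq 5$. The real content is therefore the upper bound $|V(F)|\leq 8$, and my plan is to reduce it to a single deletion statement: \emph{if $F$ is primitive and not a pseudo-cograph with $|V(F)|\geq 9$, then there is a vertex $x\in V(F)$ such that $F-x$ is again primitive and not a pseudo-cograph.} Granting this, the bound is immediate from Lemma~\ref{lem:forb->primitiveNOTPC}: were some $F\in\forbGT$ to satisfy $|V(F)|\geq 9$, the deletion statement would yield a primitive non-pseudo-cograph $F-x$ on $|V(F)|-1\geq 8$ vertices; by Theorem~\ref{thm:GatexIFFallPrimitive=PsC} this $F-x$ is not \gatex, hence contains some $F'\in\forbGT$ with $|V(F')|\leq|V(F)|-1<|V(F)|$, contradicting the minimality condition of Lemma~\ref{lem:forb->primitiveNOTPC}.

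To prove the deletion statement I would split along the primitive structure theory. If $F$ is critical-primitive (i.e.\ $F-x$ is non-primitive for every $x$), then by Proposition~\ref{prop:critical-primitive} it is isomorphic to $\mathcal{G}_r$ or $\overline{\mathcal{G}}_r$; here I would verify, by exhibiting the two ordered sets $Y,Z$ of Proposition~\ref{prop:polcat-edges} along the half-graph staircase, that these graphs are in fact polar-cats and hence pseudo-cographs, contradicting the hypothesis. Consequently $F$ is \emph{not} critical-primitive, so by definition there exists at least one $x$ with $F-x$ primitive, and only the preservation of the failure of the pseudo-cograph property remains to be secured.

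For the latter I would exploit the transversal property underlying the whole section: by Definition~\ref{def:pseudo-cograph} and Observation~\ref{obs:G-v-Cograph}, in a $(v,G_1,G_2)$-pseudo-cograph both $G_1$ and $G_2$ are cographs, so $G-v$ is a cograph and \emph{every} induced $P_4$ of $G$ must pass through $v$. Hence a primitive non-pseudo-cograph $F$ either (i) admits no single vertex lying on all of its induced $P_4$'s, or (ii) admits such a transversal but, for each such candidate, the caterpillar condition (C3) of a polar-cat fails (by Theorem~\ref{thm:CharPolCat}, being a primitive pseudo-cograph is equivalent to being a polar-cat). In case~(i) a pair of induced $P_4$'s with empty common intersection already certifies non-pseudo-cograph-ness, and since $|V(F)|\geq 9$ one can delete a vertex outside such a certificate while staying primitive (using that $F$ is not critical-primitive); in case~(ii) one deletes a vertex away from the (C3)-obstruction, keeping both primitivity and a surviving witness.

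The main obstacle is exactly this coordination in the final step: the set of ``primitivity-preserving'' vertices (governed by Schmerl--Trotter-type arguments behind Proposition~\ref{prop:critical-primitive}) and the set of ``witness-preserving'' vertices must be shown to intersect whenever $|V(F)|\geq 9$, which I expect to require a careful counting bound on how few vertices can be forced into every $P_4$-transversal or into every (C3)-witness. As an independent confirmation that the structural bound is tight and that precisely these graphs occur, I would complement the argument with the exhaustive enumeration of Algorithm~\ref{alg:construct-F} over all graphs on $5$ to $8$ vertices, which both produces the $25$ members of $\forbGT$ and verifies that no primitive non-pseudo-cograph on $8$ vertices properly contains a smaller one, matching the claimed upper bound.
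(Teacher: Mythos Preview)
Your proposal contains a concrete error and an unfilled gap, and the paper's actual proof goes a different route.

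\textbf{The error.} You claim that the critical-primitive graphs $\mathcal{G}_r$ (and their complements) are polar-cats, hence pseudo-cographs. This is false for $r\geq 3$. Indeed, the paper records that $\mathcal{G}_3\simeq F_7\in\forbGT$, and more generally $\mathcal{G}_r$ with $r\geq 3$ contains $F_7$ as an induced subgraph (Corollary~\ref{cor:critical-primitive}); hence these graphs are not \gatex and in particular not pseudo-cographs. One can also check directly that $\mathcal{G}_3-v$ contains an induced $P_4$ for every vertex $v$, so by Observation~\ref{obs:G-v-Cograph} it fails to be a pseudo-cograph. Thus your treatment of the critical-primitive case collapses. (The case can be rescued, but by the opposite mechanism: a critical-primitive $F$ with $|V(F)|\geq 9$ properly contains $F_7$ or $F_8$ and hence is not minimal.)

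\textbf{The gap.} Your entire strategy rests on the deletion statement: for $|V(F)|\geq 9$ there exists $x$ with $F-x$ primitive \emph{and} not a pseudo-cograph. You yourself flag that the coordination between ``primitivity-preserving'' and ``witness-preserving'' vertices is the main obstacle, and you offer only an expectation of a counting bound. This is precisely the heart of the matter, and nothing in your sketch controls it. In particular, your case~(i) argument (``delete a vertex outside a pair of disjoint $P_4$'s while staying primitive'') tacitly assumes that \emph{some} vertex outside that $8$-vertex certificate keeps primitivity; being non-critical-primitive only guarantees that \emph{some} vertex does, not that one outside a prescribed set does. There is no reason these two sets must intersect.

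\textbf{How the paper proceeds.} The paper does not attempt your deletion statement. In the proof of Lemma~\ref{lem:PrimWellPC->F} (appendix), one takes $H$ primitive non-pseudo-cograph with $|V(H)|\geq 9$; the critical-primitive case is dispatched via Corollary~\ref{cor:critical-primitive} (finding $F_7$ or $F_8$). Otherwise one picks any $x$ with $H'=H-x$ primitive. If $H'$ is not a pseudo-cograph, induction applies. The substantive work is the remaining case, where $H'$ \emph{is} a (well-proportioned) polar-cat: the paper then performs an exhaustive case analysis on the adjacencies of $x$ relative to the ordered sets $Y,Z$ of Proposition~\ref{prop:polcat-edges}, and in every subcase exhibits a concrete $F_i\in\forbGT$ inside $H$. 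In other words, the paper confronts head-on exactly the situation your approach tries to rule out, and resolves it by direct inspection rather than by a structural deletion lemma.
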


\begin{figure}[t]
	\begin{center}
			\includegraphics[width=0.9\textwidth]{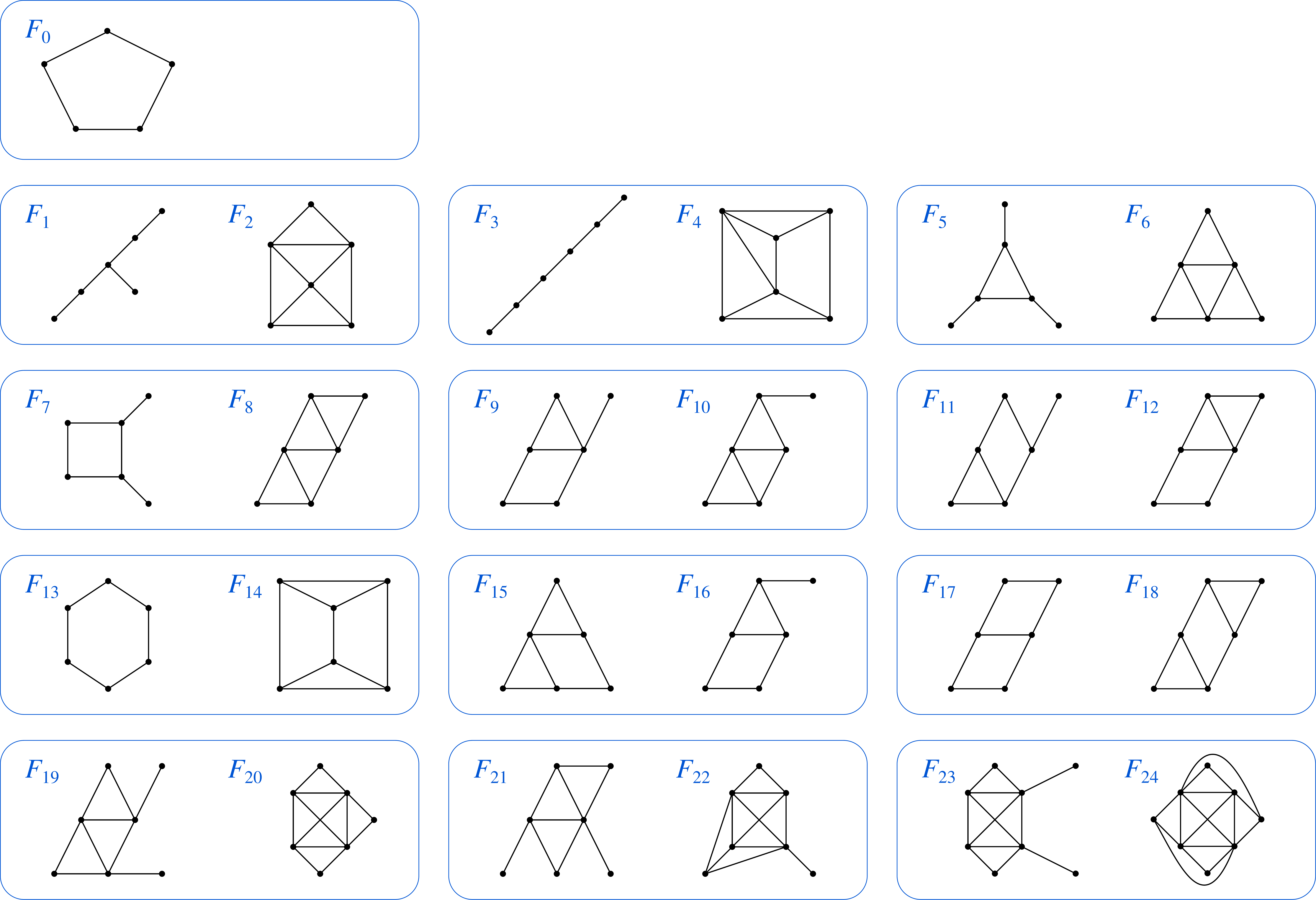}
	\end{center}
	\caption{All graphs on 5 to 8 vertices that are primitive and not
	         pseudo-cographs and for which every induced subgraph with fewer
	         vertices is \gatex. Note, $F_0$ is self-complementary. Every two
	         graphs $F_i$ and $F_{i+1}$, $i \in \{1,3,5, \ldots, 23\}$
	         (represented in a common framebox) are complements of each other.
	         According to Theorem \ref{thm:F-free}, a graph $G$ is \gatex if and
	         only if $G$ does not contain any of the graphs in $\forbGT =
	         \{F_0,F_1,\dots,F_{24}\}$ as an induced subgraph. 
			 }
\label{fig:forb}
\end{figure}

To get evidence for this conjecture, we established Alg.~\ref{alg:construct-F}
to compute all minimal forbidden subgraphs of size $5$ to $8$ which we
downloaded from \cite{graph-list}. In total, we checked $13\ 580$ graphs.
Alg.~\ref{alg:construct-F} and the choice of graphs is based on the previous
results. To be more precise, by the arguments above, it suffices to look for
graphs on at least $5$ vertices. By Lemma \ref{lem:forb->primitiveNOTPC}, a
necessary condition for $F$ being an element in $\forbGT$ is that $F$ is
primitive and not a pseudo-cograph (Alg.\ \ref{alg:construct-F}, Line
\eqref{l:primNotPC}). To check if $F$ is a minimal, we can apply Lemma
\ref{lem:forb->primitiveNOTPC} and verify if a formerly computed forbidden
subgraph $H$ with $|V(H)|<|V(F)|$ (Alg.\ \ref{alg:construct-F}, Line
\eqref{for1}) is an induced subgraph of $F$ (Alg.\ \ref{alg:construct-F}, Line
\eqref{if1}). If this is not the case, $F$ adds a new forbidden subgraph
$\forbGT$ (Alg.\ \ref{alg:construct-F}, Line \eqref{add}). This algorithm is
written in Python and requires Python version 3.7 or higher and relies on the
packages NetworkX and tralda \cite{tralda}. It is hosted at GitHub
(\url{https://github.com/marc-hellmuth/ForbiddenSubgraphs-GaTEx}
\cite{github-MH}). The set $\forbGT$ of forbidden subgraphs computed with Alg.\
\ref{alg:construct-F} is shown in Fig.\ \ref{fig:forb} and we put, \emph{from
here on}, 
\[\forbGT \coloneqq \{F_0,\dots, F_{24}\}.\] 
So-far, we have constructed all elements of $\forbGT$ on $5$ to $8$ vertices. As
we shall see later, these are indeed all elements of $\forbGT$. We are aware of
the fact, that the correctness of the computed graphs in $\forbGT$ relies on the
correct implementation of Alg.~\ref{alg:construct-F}. In order to make it as
easy as possible for the reader to verify this implementation,
Alg.~\ref{alg:construct-F} is implemented to be readable as easy as possible
and, thus, mainly brute-force routines are used, although such a simplified
algorithm goes hand in hand with a possibly lower runtime.

Since we have checked all graphs on $5$ to $8$ vertices, we obtain
\begin{observation}\label{obs:exhaustive}
 The set  $\forbGT$ of forbidden subgraphs is exhaustive for graphs up to $8$ vertices.
 In other words,  every  primitive graph $G$ that is not a pseudo-cograph and where $|V(G)| \leq 8$
 must contain at least one $F\in \forbGT$ as an induced subgraph.
\end{observation}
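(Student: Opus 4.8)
The plan is to reduce the claim to the exhaustive run of Algorithm~\ref{alg:construct-F} together with Lemma~\ref{lem:forb->primitiveNOTPC}. First I would fix the size range: every primitive graph has at least four vertices, and the unique primitive graph on four vertices is $P_4$, which is a pseudo-cograph. Hence any primitive non-pseudo-cograph $G$ with $|V(G)|\leq 8$ satisfies $5\leq|V(G)|\leq 8$, so up to isomorphism $G$ occurs among the graphs enumerated by the algorithm.

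The core of the argument is then a direct inspection of what the algorithm does on $G$. Since $G$ is primitive and not a pseudo-cograph, the test on Line~\eqref{l:primNotPC} succeeds. If the inner loop finds some earlier $H\in\forbGT$ with $|V(H)|<|V(G)|$ and $H\sqsubseteq G$ (Line~\eqref{if1}), then $G$ already contains a member of $\forbGT$. Otherwise the flag \texttt{forbidden} remains \texttt{true} and $G$ is inserted into $\forbGT$ on Line~\eqref{add}, so that $G\in\forbGT$ and $G\sqsubseteq G$ gives the claim. As $\forbGT=\{F_0,\dots,F_{24}\}$ is exactly the set returned, the conclusion holds in both cases. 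The only point that needs care here is that graphs are processed in non-decreasing order of $i$, so at the instant $G$ is handled every element of $\forbGT$ of strictly smaller order has already been inserted; thus the loop on Line~\eqref{for1} truly ranges over all smaller forbidden subgraphs relevant to the criterion of Lemma~\ref{lem:forb->primitiveNOTPC}.

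To make the statement independent of the control flow, I would argue the underlying correctness by induction on $i\in\{5,\dots,8\}$, showing that the graphs on $i$ vertices inserted by the algorithm coincide with the true minimal forbidden subgraphs on $i$ vertices. The inductive step is exactly the characterization in Lemma~\ref{lem:forb->primitiveNOTPC}: a primitive non-pseudo-cograph on $i$ vertices belongs to $\forbGT$ iff no smaller element of $\forbGT$ embeds in it, and by the induction hypothesis the computed set already agrees with the true set on all smaller sizes (note that a proper induced subgraph has fewer than $i$ vertices, so no size-$i$ element can interfere). Granting this, for an arbitrary primitive non-pseudo-cograph $G$ with $|V(G)|\leq 8$ I would pick an induced subgraph $H\sqsubseteq G$ of minimum order that is primitive and not a pseudo-cograph; minimality forbids any proper primitive non-pseudo-cograph inside $H$, so Lemma~\ref{lem:forb->primitiveNOTPC} gives $H\in\forbGT$, and $H\sqsubseteq G$ finishes the proof.

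The substantive content is therefore not the short graph-theoretic reduction but the exhaustiveness of the search itself, and I expect this to be the sole real obstacle. Its correctness rests on two computational facts: that Algorithm~\ref{alg:construct-F} enumerates \emph{every} isomorphism type on $5$ to $8$ vertices (taken from the complete lists of \cite{graph-list}), and that it decides primitivity and the pseudo-cograph property correctly for each candidate (via the recognition procedure of \cite{HS:22}). This is a matter of correct implementation rather than of mathematical proof, which is why the algorithm is kept deliberately brute-force.
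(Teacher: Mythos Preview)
Your proposal is correct and follows the same approach as the paper: the observation is justified by the exhaustive computational search performed by Algorithm~\ref{alg:construct-F} over all graphs on $5$ to $8$ vertices, together with the characterization of Lemma~\ref{lem:forb->primitiveNOTPC}. The paper treats the observation as an immediate consequence of having run the algorithm (``Since we have checked all graphs on $5$ to $8$ vertices, we obtain\ldots''), whereas you spell out the underlying inductive logic and the two branches of the algorithm's control flow more explicitly; but the substance is the same, and you correctly identify that the real content lies in the correctness and exhaustiveness of the implementation rather than in any additional mathematical argument.
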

	
In addition, we applied Alg.\ \ref{alg:construct-F} to all
$9$ vertex graphs. As it turned out, all such graphs $G$ that are not 
\gatex already contain one of the existing 
$F\in \forbGT$. Consequently, there are no forbidden subgraphs
with 9 vertices which gives further evidence to the aforestated
conjecture. 
Closer inspection of the graphs in $\forbGT$ shows 
\begin{observation}
$F\in \forbGT$ if and only $\overline F\in \forbGT$ and, therefore, 
$G$ is $\forbGT$-free if and only if $\overline G$ is $\forbGT$-free
\end{observation}

Now consider the graph $\mathcal{G}_r$ as specified in Prop.\
\ref{prop:critical-primitive}. If $r\geq 3$ and thus, $|V(\mathcal{G}_r)|\geq
6$, then $\mathcal{G}_r[\{a_1,a_2,a_3,b_1,b_2,b_3] \simeq F_7\in \forbGT$. This
implies together with Prop.\ \ref{prop:critical-primitive} 

\begin{corollary}\label{cor:critical-primitive}
If $G$ is critical-primitive and $|V(G)|\geq 6$, then $G$ contains the forbidden
subgraph $F_7\in \forbGT$ or its complement $F_8 \in \forbGT$.
\end{corollary}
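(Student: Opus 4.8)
The plan is to read off the result directly from the classification of critical-primitive graphs in Prop.\ \ref{prop:critical-primitive} together with the explicit induced-subgraph computation carried out in the paragraph preceding the corollary. By Prop.\ \ref{prop:critical-primitive}, every critical-primitive graph $G$ is isomorphic either to $\mathcal{G}_r$ or to its complement $\overline{\mathcal{G}}_r$ for some $r\geq 2$. Since $|V(\mathcal{G}_r)| = |V(\overline{\mathcal{G}}_r)| = 2r$, the hypothesis $|V(G)|\geq 6$ forces $r\geq 3$, so $\mathcal{G}_r$ contains at least the six vertices $a_1,a_2,a_3,b_1,b_2,b_3$.

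The single substantive step is the verification that $\mathcal{G}_r[\{a_1,a_2,a_3,b_1,b_2,b_3\}]\simeq F_7$, which has already been recorded just above the corollary. Concretely, restricting the edge rule ``$\{a_i,b_j\}\in E$ iff $i\geq j$'' to indices in $\{1,2,3\}$ yields the staircase pattern with edges $a_1b_1$; $a_2b_1,a_2b_2$; $a_3b_1,a_3b_2,a_3b_3$ and no edges among the $a_i$ or among the $b_j$, and one checks that this is isomorphic to $F_7$. Hence $F_7\sqsubseteq \mathcal{G}_r$ whenever $r\geq 3$.

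It then remains to split into the two isomorphism cases. If $G\simeq \mathcal{G}_r$, then $G$ contains $F_7$ as an induced subgraph and we are done. If instead $G\simeq \overline{\mathcal{G}}_r$, I would use that the induced-subgraph relation is preserved under complementation: from $F_7\sqsubseteq \mathcal{G}_r$ one obtains $\overline{F_7}\sqsubseteq \overline{\mathcal{G}}_r\simeq G$. By the structure of $\forbGT$ displayed in Fig.\ \ref{fig:forb}, the pair $(F_7,F_8)$ consists of mutual complements, so $\overline{F_7}=F_8$ and $G$ contains $F_8$. This settles both cases.

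There is essentially no genuine obstacle here: the corollary is a bookkeeping consequence of Prop.\ \ref{prop:critical-primitive} and the one isomorphism check, both of which are effectively supplied by the surrounding text. The only point that requires a moment's care is the complement argument in the second case, where one must invoke both the closure of ``$\sqsubseteq$'' under complementation and the explicit fact that $F_8=\overline{F_7}$ in $\forbGT$.
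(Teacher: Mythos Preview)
Your proposal is correct and follows essentially the same approach as the paper: invoke Prop.~\ref{prop:critical-primitive} to reduce to $\mathcal{G}_r$ or $\overline{\mathcal{G}}_r$ with $r\geq 3$, use the already-stated isomorphism $\mathcal{G}_r[\{a_1,a_2,a_3,b_1,b_2,b_3\}]\simeq F_7$, and then pass to complements for the $\overline{\mathcal{G}}_r$ case using $F_8=\overline{F_7}$.
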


The next lemma is crucial for the forbidden subgraph characterization of graphs
that can be explained by galled-trees.
\begin{lemma}\label{lem:PrimWellPC->F}
	Every primitive graph that is not a pseudo-cograph is not $\forbGT$-free.
\end{lemma}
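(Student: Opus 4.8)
The plan is to prove the statement by induction on $|V(G)|$, i.e.\ to examine a minimal counterexample. Suppose, for contradiction, that some primitive non-pseudo-cograph is $\forbGT$-free, and among all such graphs let $G$ be one with $|V(G)|$ smallest. By Observation \ref{obs:exhaustive}, every primitive non-pseudo-cograph on at most $8$ vertices already contains some $F\in\forbGT$; hence the minimal counterexample satisfies $|V(G)|\geq 9$.

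First I would record the structural consequence of minimality: for every $x\in V(G)$ the induced subgraph $G-x$ is again $\forbGT$-free (being induced in $G$), so it cannot simultaneously be primitive and a non-pseudo-cograph --- otherwise it would be a strictly smaller counterexample. Thus, for each vertex $x$, either $G-x$ is not primitive or $G-x$ is a pseudo-cograph. I would then split according to whether $G$ is critical-primitive. If $G$ is critical-primitive, then since $|V(G)|\geq 9\geq 6$, Corollary \ref{cor:critical-primitive} guarantees that $G$ contains $F_7$ or $F_8$ as an induced subgraph, contradicting $\forbGT$-freeness. Otherwise there is a vertex $x$ with $G-x$ primitive; by the dichotomy above $G-x$ must then be a primitive pseudo-cograph, hence a polar-cat by Theorem \ref{thm:CharPolCat}, and since $|V(G-x)| = |V(G)|-1\geq 8\geq 7$, Observation \ref{obs:well-prop} upgrades this to a \emph{well-proportioned} polar-cat.

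The remaining case is the heart of the argument and, I expect, the main obstacle. Here $G$ is primitive and not a pseudo-cograph while $G-x$ is a well-proportioned polar-cat of order at least $8$. Passing to the complement if necessary --- which is harmless, since primitivity, the pseudo-cograph and polar-cat properties, and $\forbGT$-freeness are all complement-invariant (cf.\ Observation \ref{obs:complement-galled-tree}) --- I would invoke the explicit edge description of Proposition \ref{prop:polcat-edges} to fix the two ordered chains $Y=\{y_1,\dots,y_\ell=v\}$ and $Z=\{z_1,\dots,z_m=v\}$ of $G-x$ together with their staircase adjacency pattern. A short argument shows that primitivity of $G$ forces $x$ to have neither empty nor full adjacency to $V(G-x)$, as otherwise $V(G-x)$ would be a non-trivial module of $G$.

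The task is then to track the positions along the two chains at which the neighbourhood of $x$ switches between adjacent and non-adjacent, and to verify that every such attachment pattern --- subject to keeping $G$ primitive and non-pseudo-cograph --- allows one to select at most $8$ chain vertices which, together with $x$, induce a primitive non-pseudo-cograph; Observation \ref{obs:exhaustive} then produces an induced $F\in\forbGT$, the desired contradiction. I anticipate that essentially all of the effort lies in this finite but somewhat intricate case analysis on how $x$ interleaves with the staircase, since one must systematically exclude every attachment that preserves primitivity and the non-pseudo-cograph property without creating one of the bounded forbidden configurations.
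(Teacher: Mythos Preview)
Your proposal is correct and follows essentially the same route as the paper: induction on $|V(G)|$ with the base case handled by Observation~\ref{obs:exhaustive}, the critical-primitive case dispatched via Corollary~\ref{cor:critical-primitive}, and otherwise reduction (up to complementation) to a well-proportioned slim polar-cat $G-x$ with the explicit chain description of Proposition~\ref{prop:polcat-edges}, followed by a case analysis on how $x$ attaches to the two chains $Y$ and $Z$. The paper's appendix carries out that case analysis by directly exhibiting, in each subcase, a specific $F_i\in\forbGT$ on an explicit vertex set (rather than indirectly producing a small primitive non-pseudo-cograph and reapplying Observation~\ref{obs:exhaustive} as you suggest); note also that your count should read ``at most seven chain vertices together with $x$'' so that the resulting subgraph has at most eight vertices and Observation~\ref{obs:exhaustive} actually applies.
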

\begin{proof}
In essence, the proof is based on induction on the number of vertices. 
	The base case is provided by the previously computed set $\forbGT$ that
	already contains all forbidden subgraphs on $5$ to $8$ vertices.
	The straightforward but tedious case-by-case analysis is given in the appendix.
\end{proof}
We are now in the position to provide a further novel characterization of \gatex graphs.
\begin{theorem}\label{thm:F-free}
	$G$ is \gatex if and only if $G$ is $\forbGT$-free.
\end{theorem}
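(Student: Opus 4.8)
The plan is to prove both implications by contraposition, combining the characterization of \gatex graphs via primitive induced subgraphs (Thm.\ \ref{thm:GatexIFFallPrimitive=PsC}) with the two structural lemmas already relating $\forbGT$ to primitive non-pseudo-cographs. The theorem then falls out in a few lines, since almost all of the real work has been pushed into Lemma \ref{lem:PrimWellPC->F}.

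For the \emph{only-if} direction, I would assume $G$ is not $\forbGT$-free, so $G$ contains some $F\in\forbGT$ as an induced subgraph. By Lemma \ref{lem:forb->primitiveNOTPC}, every element of $\forbGT$ is primitive and not a pseudo-cograph; hence $F$ is a primitive induced subgraph of $G$ that is not a pseudo-cograph, and Thm.\ \ref{thm:GatexIFFallPrimitive=PsC} immediately gives that $G$ is not \gatex. (Equivalently, one may note that each $F\in\forbGT$ is itself not \gatex---apply Thm.\ \ref{thm:GatexIFFallPrimitive=PsC} to $F$, viewed as a primitive induced subgraph of itself---and then invoke heritability, Lemma \ref{lem:gt-heri}, to conclude that no \gatex graph can contain such an $F$.)

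For the \emph{if} direction, I would assume $G$ is not \gatex. By Thm.\ \ref{thm:GatexIFFallPrimitive=PsC}, there is a primitive induced subgraph $H\sqsubseteq G$ that is not a pseudo-cograph. By Lemma \ref{lem:PrimWellPC->F}, $H$ is not $\forbGT$-free, so $H$ contains some $F\in\forbGT$ as an induced subgraph. Transitivity of the induced-subgraph relation together with $H\sqsubseteq G$ then yields $F\sqsubseteq G$, so $G$ is not $\forbGT$-free. This completes both directions.

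The entire difficulty is concentrated in Lemma \ref{lem:PrimWellPC->F}, which the authors defer to the appendix: once one knows that \emph{every} primitive non-pseudo-cograph contains one of the $25$ graphs in $\forbGT$, the equivalence is essentially immediate. The hard part is therefore the inductive case analysis behind that lemma---the base case being the exhaustive computation on $5$ to $8$ vertices recorded in Obs.\ \ref{obs:exhaustive}, and the inductive step being the delicate argument that every primitive non-pseudo-cograph on $9$ or more vertices admits a vertex whose deletion keeps it primitive and non-pseudo-cograph, so that it can be reduced down to one of the already-computed minimal obstructions. In short, the theorem itself is a short logical assembly, and I expect the genuine obstacle to lie entirely upstream in establishing Lemma \ref{lem:PrimWellPC->F}.
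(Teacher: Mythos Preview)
Your proof is correct and follows essentially the same route as the paper: both directions reduce to Lemma~\ref{lem:PrimWellPC->F}, with the \emph{if} direction producing a primitive non-pseudo-cograph induced subgraph (you via Thm.~\ref{thm:GatexIFFallPrimitive=PsC}, the paper via Thm.~\ref{thm:GalledTreeExplainable} and Obs.~\ref{obs:quotient}) and then invoking that lemma.

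One small correction to your closing commentary: the inductive step of Lemma~\ref{lem:PrimWellPC->F} is \emph{not} the claim that every primitive non-pseudo-cograph on $\geq 9$ vertices has a vertex whose removal leaves it primitive \emph{and} non-pseudo-cograph. Rather, one removes a vertex $x$ so that $H-x$ remains primitive (possible unless $H$ is critical-primitive, handled separately via Cor.~\ref{cor:critical-primitive}); if $H-x$ is still not a pseudo-cograph the induction applies, but if $H-x$ \emph{is} a pseudo-cograph then it is a well-proportioned polar-cat with a unique decomposition, and the bulk of the appendix is a lengthy case analysis on how $x$ attaches to that structure to locate some $F_i\in\forbGT$ directly inside $H$.
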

\begin{proof}
	The \emph{only-if} direction readily follows from the fact that every
	induced subgraph of a \gatex graph $G$ is \gatex as well (cf.\ Lemma
	\ref{lem:gt-heri}). Since none of the graphs in $\forbGT$ are \gatex, it
	follows that $G$ is $\forbGT$-free. 

	By contraposition, assume that $G$ is not \gatex. By Thm.\
	\ref{thm:GalledTreeExplainable}, $G$ contains a prime module $M$ such that
	$G[M]/\Mmax(G[M])$ is not a pseudo-cograph. Obs.\ \ref{obs:quotient} implies
	that there is an induced subgraph $H\subseteq G$ such that $H\simeq
	G[M]/\Mmax(G[M])$. Note that $H$ is primitive and, in particular, not a
	pseudo-cograph. This together with Lemma \ref{lem:PrimWellPC->F} implies
	that $G$ is not $\forbGT$-free.
\end{proof}

While the set $\forbGT$ characterizes \gatex graphs, pseudo-cographs are not
characterized by $\forbGT$. By way of example, consider the graph $G$ that is
the disjoint union of two induced $P_4$s. Since $G$ is $\forbGT$-free, $G$ is
\gatex. However, $G$ is not a pseudo-cograph as, for all choices $v\in V(G)$,
the graph $G-v$ contains an induced $P_4$ and is, therefore not a cograph (cf.\
Obs.\ \ref{obs:G-v-Cograph}). Hence, we leave it as an open problem to find a
set of forbidden subgraphs that characterizes pseudo-cographs.

\section{\gatex Graphs and Other Graph Classes}

In this section, we show the rich connection of \gatex graphs  to other graph classes.

We start with the following definitions. A \emph{hole} is an induced cycle $C_n$
on $n\geq 5$ five vertices. The complement of a hole is an \emph{anti-hole}. A
graph is \emph{weakly-chordal} if and only if it does not contain holes or
anti-holes \cite{hayward1985weakly}. A graph $G$ is \emph{perfect}, if the
chromatic number of every induced subgraph equals the size of the largest clique
of that subgraph. As shown by \cite{hayward1985weakly}, weakly-chordal graphs
are perfect.

\begin{lemma}\label{lem:odd-hole-free}
	\gatex graphs are hole-free.
\end{lemma}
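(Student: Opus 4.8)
The plan is to reduce the statement to a single family of graphs using heredity. By Lemma~\ref{lem:gt-heri}, being \gatex\ is closed under taking induced subgraphs, so if some \gatex\ graph contained an induced hole $C_n$ with $n\geq 5$, then $C_n$ itself would be \gatex. Hence it suffices to prove that \emph{no} hole is \gatex, i.e.\ that $C_n$ is not \gatex\ for every $n\geq 5$; the contrapositive then yields that \gatex\ graphs are hole-free.

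To show $C_n\notin\gatex$ I would invoke Theorem~\ref{thm:GatexIFFallPrimitive=PsC}, according to which it is enough to exhibit one primitive induced subgraph of $C_n$ that is not a pseudo-cograph. The natural candidate is $C_n$ itself, so two facts remain to be checked: (i) $C_n$ is primitive, and (ii) $C_n$ is not a pseudo-cograph. Part (ii) is the easy half. Since $|V(C_n)|=n>2$, if $C_n$ were a pseudo-cograph it would be a $(v,G_1,G_2)$-pseudo-cograph for some vertex $v$, and Observation~\ref{obs:G-v-Cograph} would force $C_n-v$ to be a cograph. But deleting any single vertex from a cycle yields the path $P_{n-1}$, and as $n-1\geq 4$ this path contains an induced $P_4$ and is therefore not a cograph by Theorem~\ref{thm:prop_cograph}. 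As no vertex deletion turns $C_n$ into a cograph, $C_n$ cannot be a pseudo-cograph.

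For part (i) I would label the vertices $v_0,\dots,v_{n-1}$ cyclically and argue that any module $M$ with $|M|\geq 2$ must satisfy $M=V$. The key claim is that such an $M$ is closed under passing to cycle-neighbours: if $v_p\in M$ but, say, $v_{p+1}\notin M$, then since $v_{p+1}$ is adjacent to $v_p\in M$, the module property forces $v_{p+1}$ to be adjacent to \emph{all} of $M$; as its only neighbours are $v_p$ and $v_{p+2}$, this gives $M=\{v_p,v_{p+2}\}$. Now $v_{p-1}$ (which, for $n\geq 5$, is distinct from $v_p,v_{p+1},v_{p+2}$ and lies outside $M$) is adjacent to $v_p$ but not to $v_{p+2}$, because their cyclic distance is $3\not\equiv\pm1\pmod n$ when $n\geq 5$ — contradicting that $v_{p-1}$ sees all or none of $M$. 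Hence $M$ is closed under neighbours, and since $C_n$ is connected this forces $M=V$; so $C_n$ has only trivial modules and is primitive.

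Combining (i) and (ii), $C_n$ is a primitive induced subgraph of itself that is not a pseudo-cograph, so Theorem~\ref{thm:GatexIFFallPrimitive=PsC} gives $C_n\notin\gatex$, and heredity completes the proof. The only step requiring genuine care is the primitivity argument (i): the numerical condition $3\not\equiv\pm1\pmod n$ is exactly what fails at $n=4$ (where $C_4=K_{2,2}$ is indeed not primitive), so the hypothesis $n\geq 5$ must be used precisely there, while everything else is immediate from the path structure of vertex-deleted holes.
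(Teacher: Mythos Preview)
Your argument is correct, but it follows a genuinely different route from the paper's. The paper's proof is a two-line application of Theorem~\ref{thm:F-free}: it simply observes that $C_5\simeq F_0$, $C_6\simeq F_{13}$, and every $C_\ell$ with $\ell>6$ contains an induced $P_7\simeq F_3$, so any hole already contains a member of $\forbGT$. You instead bypass the forbidden-subgraph list entirely and go back to Theorem~\ref{thm:GatexIFFallPrimitive=PsC}, checking directly that every $C_n$ with $n\geq 5$ is a primitive non-pseudo-cograph (via Observation~\ref{obs:G-v-Cograph} and the fact that $C_n-v=P_{n-1}$ is not a cograph).

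The trade-off is clear. The paper's proof is shorter but rests on the full strength of Theorem~\ref{thm:F-free}, which in turn depends on the computer-assisted enumeration of $\forbGT$ and the lengthy case analysis in the appendix proof of Lemma~\ref{lem:PrimWellPC->F}. Your proof is longer on the page but logically lighter: it needs only Theorem~\ref{thm:GatexIFFallPrimitive=PsC}, Observation~\ref{obs:G-v-Cograph}, and an elementary primitivity argument for cycles, so it is self-contained and independent of $\forbGT$. Your remark that the hypothesis $n\geq 5$ is used precisely in the primitivity step (and fails at $n=4$) is exactly right.
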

\begin{proof}
 By contraposition, assume that $G$ contains an induced cycle $C_\ell$ on $\ell
 \geq 5$ vertices. Hence, if $\ell=5$ or $\ell=6$, then $G$ contains $F_0$ or
 $F_{13}$ as an induced subgraph and, otherwise, if $\ell>6$, then $G$ contains
 $F_3$ as an induced subgraph. In all cases, Theorem \ref{thm:F-free} implies
 that $G$ is not \gatex. 
\end{proof}

\begin{theorem}
	\gatex graphs are weakly-chordal and thus, perfect. 	
\end{theorem}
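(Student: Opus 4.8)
The plan is to show that a \gatex graph contains neither a hole nor an anti-hole, since weakly-chordal graphs are precisely those with no hole and no anti-hole, and the perfection then follows immediately from the cited result of Hayward. The hole-free direction is already established in Lemma \ref{lem:odd-hole-free}, so the only remaining task is to rule out anti-holes.

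For the anti-hole part, the cleanest route is to invoke the complementation symmetry of \gatex graphs recorded in Observation \ref{obs:complement-galled-tree}. First I would recall that an anti-hole in $G$ is, by definition, the complement of a hole, so $G$ contains an induced anti-hole on $n \geq 5$ vertices if and only if $\overline G$ contains an induced hole on $n \geq 5$ vertices. Now suppose, for contradiction, that a \gatex graph $G$ contains an induced anti-hole. Then $\overline G$ contains an induced hole. By Observation \ref{obs:complement-galled-tree}, $\overline G$ is also \gatex, but this contradicts Lemma \ref{lem:odd-hole-free}, which asserts that \gatex graphs are hole-free. Hence $G$ contains no anti-hole.

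Putting the two pieces together: a \gatex graph has no hole (Lemma \ref{lem:odd-hole-free}) and no anti-hole (by the complement argument above), so by the definition of weakly-chordal graphs it is weakly-chordal. The cited result that weakly-chordal graphs are perfect \cite{hayward1985weakly} then yields perfection. I do not anticipate any genuine obstacle here; the one subtlety to state carefully is that the induced-subgraph relation is preserved under complementation, so an induced anti-hole of $G$ really does correspond to an induced hole of $\overline G$ on the same vertex set. This is exactly the kind of symmetry that Observation \ref{obs:complement-galled-tree} is designed to exploit, which is why the anti-hole case reduces to the already-proven hole case rather than requiring a fresh inspection of the forbidden subgraphs in $\forbGT$.
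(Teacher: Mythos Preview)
Your proposal is correct and follows essentially the same approach as the paper: use Lemma~\ref{lem:odd-hole-free} for hole-freeness, invoke Observation~\ref{obs:complement-galled-tree} to transfer this to anti-hole-freeness via complementation, and then conclude weak chordality and perfection by Hayward's result. The paper's proof is just a terser version of your argument.
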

\begin{proof}
 	By Lemma \ref{lem:odd-hole-free}, \gatex graphs are hole free. In addition,
 by Obs.\ \ref{obs:complement-galled-tree}, \gatex graphs are closed under
 complementation which immediately implies that $G$ must be anti-hole free. By
 definition, \gatex graph are therefore weakly-chordal and thus, by
 \cite{hayward1985weakly}, perfect. 
\end{proof}

In all perfect graphs and thus, in all \gatex graphs, the graph coloring
problem, maximum clique problem, and maximum independent set problem can all be
solved in polynomial time \cite{grotschel2012geometric}. 

A graph is \emph{murky}, if the graph and its complement is hole- and $P_6$-free
\cite{HAYWARD:90}. Theorem \ref{thm:F-free} and Lemma \ref{lem:odd-hole-free}
imply
\begin{theorem}
	Every \gatex graph is murky.
\end{theorem}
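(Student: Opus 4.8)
The plan is to unpack the definition of \emph{murky} and dispatch each of its four requirements using results already at hand. By definition, $G$ is murky precisely if both $G$ and $\overline G$ are hole-free and $P_6$-free. Hole-freeness of $G$ is exactly Lemma~\ref{lem:odd-hole-free}. Since \gatex graphs are closed under complementation (Obs.~\ref{obs:complement-galled-tree}), $\overline G$ is \gatex as well, so Lemma~\ref{lem:odd-hole-free} applied to $\overline G$ yields that $\overline G$ is hole-free too. The same complementation symmetry means that, once we show every \gatex graph is $P_6$-free, the statement for $\overline G$ follows immediately by applying it to $\overline G$. Hence the entire theorem reduces to the single claim: \emph{a \gatex graph contains no induced $P_6$}.

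To establish this, I would show that $P_6$ itself is already forbidden, i.e.\ $P_6$ is not $\forbGT$-free, and then invoke heredity. Concretely, if $G$ had an induced $P_6$, that $P_6$ would contain some $F\in\forbGT$ as an induced subgraph, and then $F\sqsubseteq G$ would contradict the $\forbGT$-freeness of $G$ guaranteed by Theorem~\ref{thm:F-free}. To see that $P_6$ contains a forbidden subgraph, I would verify that $P_6$ is primitive and is not a pseudo-cograph, and then apply Lemma~\ref{lem:PrimWellPC->F}, which says exactly that every such graph fails to be $\forbGT$-free. (Equivalently, one may read off from Fig.~\ref{fig:forb} that $P_6$ is one of the listed graphs in $\forbGT$, consistent with the appearance of this graph in the proof of Lemma~\ref{lem:odd-hole-free} for long holes.)

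The two auxiliary facts about $P_6=1-2-3-4-5-6$ are elementary finite checks. For primitivity: if $M$ were a nontrivial module, then any vertex outside $M$ that is adjacent to a vertex of $M$ must be adjacent to all of $M$; since every vertex of $P_6$ has degree at most $2$, this forces $|M|\le 2$ (a module of size $\ge 3$ would have no outside neighbours and hence, by connectedness, equal $V(P_6)$). A direct inspection of the only four candidate pairs $\{1,3\},\{2,4\},\{3,5\},\{4,6\}$ shows none is a module, so $P_6$ is primitive. For failure of the pseudo-cograph property: by Obs.~\ref{obs:G-v-Cograph}, any decomposition of $P_6$ as a $(v,G_1,G_2)$-pseudo-cograph requires $P_6-v$ to be a cograph, which holds only for the interior vertices $v\in\{3,4\}$; but then condition (P3) forces the vertex split across the two components of $P_6-v$, making $G_1$ or $G_2$ an induced $P_4$ and thus violating (P2). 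Hence $P_6$ is not a pseudo-cograph.

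Collecting the pieces, $G$ and $\overline G$ are hole-free and $P_6$-free, so $G$ is murky. The only place demanding care is the verification of the two structural facts about $P_6$; both are routine once one uses the degree bound for primitivity and Obs.~\ref{obs:G-v-Cograph} together with (P2)--(P3) for the pseudo-cograph obstruction. I expect no genuine difficulty, since all the heavy lifting is carried by Theorem~\ref{thm:F-free}, Lemma~\ref{lem:PrimWellPC->F}, and the complementation symmetry of Obs.~\ref{obs:complement-galled-tree}; the main subtlety is simply to notice that the problem is self-dual under complementation, which halves the work.
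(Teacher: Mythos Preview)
Your proposal is correct and follows essentially the same route as the paper: the paper's proof is the one-line remark ``Theorem~\ref{thm:F-free} and Lemma~\ref{lem:odd-hole-free} imply'', which is exactly your plan of using hole-freeness, the forbidden-subgraph characterization (to get $P_6$-freeness, since $P_6=F_3\in\forbGT$), and closure under complementation. The only difference is that the paper simply reads off $P_6\in\forbGT$ from the computed list, whereas you re-derive it via Lemma~\ref{lem:PrimWellPC->F}; both are fine. One small quibble: your primitivity check lists ``the only four candidate pairs'' $\{1,3\},\{2,4\},\{3,5\},\{4,6\}$ without saying why other 2-element subsets are excluded a priori---either justify this restriction or simply observe that no two vertices of a path share the same outside neighborhood.
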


A directed graph $G' = (V, E')$ is an \emph{orientation} of a graph $G=(V,E)$ if
for all $\{x,y\}\in E$, either $(x, y) \in E'$ or $(y, x) \in E'$ (but not both)
and for all $(x,y) \in E'$, $\{x,y\} \in E$ holds. An orientation $G'=(V',E')$
of a graph $G=(V,E)$ is \emph{transitive} if $(x,y),(y,z)\in E'$ implies
$(x,z)\in E'$ \cite{gallai1967transitiv}. Graphs for which a transitive
orientation exists are known as \emph{comparability graphs}. In what follows, we
show that \gatex graphs are comparability graphs. To this end, we start with

\begin{theorem}\label{thm:gatex-comparabilty}
	Every \gatex graph is a comparability graph.
\end{theorem}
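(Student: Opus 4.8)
The plan is to prove that every \gatex graph admits a transitive orientation by exploiting the recursive structure provided by Theorem~\ref{thm:GalledTreeExplainable}, namely that a \gatex graph is built from its modular decomposition tree in which every prime module $M$ has the property that $G[M]/\Mmax(G[M])$ is a primitive pseudo-cograph (a polar-cat). The natural strategy is induction on $|V(G)|$, orienting the graph module by module along the MDT. At each internal node of the MDT we have a quotient graph on the maximal strong submodules; if the node is labeled $0$ (parallel) there are no edges between distinct submodules to orient, if it is labeled $1$ (series) all cross-edges are present, and if it is prime the quotient is a polar-cat. In each case I would first transitively orient the quotient graph, and then combine this with transitive orientations of the submodules themselves, using the standard fact that comparability graphs are closed under modular substitution: one orients the cross-edges uniformly according to the quotient's orientation, and orients the interior of each module by induction, and the result remains transitive because any $P_3$ of the form $x-y-z$ with $x,y$ in one module and $z$ outside forces $x$ and $z$ (resp.\ $y$ and $z$) to behave identically by the module axiom.

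The crux of the argument is therefore the prime case: showing that every polar-cat (equivalently, every primitive pseudo-cograph) is a comparability graph. For this I would invoke Prop.~\ref{prop:polcat-edges}, which gives an explicit description of the edges of a primitive pseudo-cograph in terms of the two ordered vertex sets $Y=\{y_1,\dots,y_\ell=v\}$ and $Z=\{z_1,\dots,z_m=v\}$ with the parity conditions on the indices. The plan is to read off a transitive orientation directly from these linear orders: orient each edge $\{y_i,y_j\}$ with $i<j$ from $y_i$ to $y_j$ (and similarly within $Z$), and orient the complete bipartite part between $Y\setminus\{v\}$ and $Z\setminus\{v\}$ in the fat case consistently, say from $Y$ to $Z$. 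One then checks that this orientation is transitive by verifying it on the three possible edge types. Since the edge set of $G[Y]$ is exactly $\{\{y_i,y_j\} : i<j,\ i \text{ odd}\}$ in the slim case (and analogously with $i$ even in the fat case), the parity pattern is precisely what guarantees that whenever $y_a\to y_b$ and $y_b\to y_c$ are both edges, the index $a$ is odd (or even), forcing $\{y_a,y_c\}$ to be an edge as well, oriented $y_a\to y_c$ as required.

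\textbf{The main obstacle} I anticipate is the transitivity check across the boundary between the three edge classes, especially in the fat case where the bipartite cross-edges interact with the internal edges of $G[Y]$ and $G[Z]$: one must confirm that a directed path using a $Y$-internal edge followed by a $Y$-to-$Z$ cross-edge (or a cross-edge followed by a $Z$-internal edge) never escapes the edge set. This requires carefully tracking the parity conditions and the placement of $v$ (which lies in both $Y$ and $Z$ and sits at the end of each ordering). A clean alternative that sidesteps some of this bookkeeping is to use Obs.~\ref{obs:complement-galled-tree}: since \gatex graphs are closed under complementation and since a graph is a comparability graph iff it admits a transitive orientation, it may suffice to handle only the slim case explicitly and derive the fat case by complementing, because $G$ is a $(v,G_1,G_2)$-polar-cat iff $\overline G$ is a $(v,\overline G_1,\overline G_2)$-polar-cat (Thm.~\ref{thm:prop_pc}) with the slim/fat roles interchanged. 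Either way, once the prime case is settled, the inductive substitution argument for the $0$- and $1$-labeled nodes is routine, and the theorem follows.
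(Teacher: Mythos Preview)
Your main approach is essentially the paper's: first show directly that every primitive pseudo-cograph (polar-cat) admits a transitive orientation using the explicit edge description of Prop.~\ref{prop:polcat-edges}, and then lift this to arbitrary \gatex graphs via modular substitution. The paper handles the slim and fat cases separately with explicit orientations (in the fat case one must reverse the internal orientation on $Z$, putting $(z_j,z_i)$ for $i<j$ with $i$ even, so that all cross-edges $Y\to Z$ compose correctly), and then invokes the classical fact from \cite{Spinrad:85,CS:99} that $G$ is a comparability graph iff $G[M]/\Mmax(G[M])$ is one for every prime module $M$---which is exactly your inductive substitution argument.

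There is, however, a genuine error in the alternative you propose at the end. You suggest that it may suffice to treat only the slim case and obtain the fat case by complementation, since the complement of a fat polar-cat is slim. This does not work: the class of comparability graphs is \emph{not} closed under complementation. Knowing that $\overline G$ is a comparability graph tells you nothing, a priori, about $G$. (Indeed, the graphs for which both $G$ and $\overline G$ are comparability graphs form the strictly smaller class of permutation graphs.) The paper does eventually prove that \gatex graphs are permutation graphs, but only \emph{after} establishing the present theorem for both the slim and the fat case separately---so the shortcut is circular. You must carry out the transitivity check for the fat case directly, paying attention to the mixed paths through $v$ and across the bipartite block; this is precisely the bookkeeping you identified as the obstacle, and it cannot be avoided.
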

\begin{proof}
 We start with showing that every primitive pseudo-cograph is a comparability
 graph. To this end, let $G=(V,E)$ be a primitive pseudo-cograph. 

 Suppose first 
 that $G$ is slim. By Prop.\ \ref{prop:polcat-edges}, we can construct two
 ordered sets $Y=\{y_1, \ldots, y_{\ell-1}, y_{\ell}=v\}$ and $Z=\{z_1, \ldots,
 z_{m-1}, z_{m}=v\}$, $\ell,m \geq 2$ such that $Y \cap Z=\{v\}$, $Y \cup Z=V$;
 and $\{y_i,y_j\}\in E$ precisely if $1 \leq i<j \leq \ell$ and $i$ is
 odd; and $\{z_i,z_j\}\in E$ precisely if $1 \leq i<j \leq m$ and $i$ is odd.
 Note that, since $G$ is slim, there are no edges connecting vertices in
 $Y\setminus \{v\}$ with vertices in $Z\setminus\{v\}$. We now define a directed
 graph $G'=(V,E')$ by putting $(y_i,y_j)\in E'$ for all $\{y_i,y_j\}\in E$ and
 $(z_i,z_j)\in E'$ for all $\{z_i,z_j\}\in E$ where, in all cases, $i<j$ and $i$
 is odd. Note, that $v=y_\ell=y_m$ where $\ell$ and $m$ are the largest indices
 of the vertices in $Y$ and $Z$, respectively. By definition, there are is no
 arc $(v,x)$ for all $x \in Y\cup Z$ in $G'$.
 
 Consider now two arcs $(a,b)$ and $(b,c)$ in $G'$. We show that $(a,c)\in E'$.
 We consider the following cases: 
 (i) either $a,b,c\in Y$ or $a,b,c\in Z$ and
 (ii) at most two of $a,b,c$ are contained in $Y$ and 
       at most two of $a,b,c$ are contained in $Z$. 
 Suppose first that $a,b,c\in Y$ and let $a=y_p$, $b=y_q$ and $c=y_r$ and thus,
 $p<q<r$ and $p,q$ must be odd. Since $p<r$ and $p$ is odd, there is an edge
 $\{y_p,y_r\}\in E$ and, by construction, an arc $(y_p,y_r)\in E'$. By similar
 arguments, the case $a,b,c\in Z$ is shown. Consider now Case (ii). Since there
 are no arcs in $G'$ connecting vertices in $Y\setminus \{v\}$ with vertices in
 $Z\setminus\{v\}$, one of the vertices $a,b$ and $c$ must be the vertex $v$.
 Since, $v=y_\ell=y_m$ where $\ell$ and $m$ are the largest indices of the
 vertices in $Y$ and $Z$, respectively, 
 there are no arcs $(v,x)$ for all $x \in Y\cup Z$ in $G'$. Therefore, only $c=v$ is
 possible. Hence, if $b\in Y$ then $a\in Y$ which together with $v\in Y$ implies
 $a,b,c\in Y$. Thus, this case cannot occur when we assume that at most two of
 $a,b,c$ are contained in $Y$. By similar arguments, if $b\in Z$, then $a,b,c\in
 Z$ and thus, this case cannot occur. 
 
 Therefore, $G'$ is a transitive orientation of $G$ in case that $G$ is a slim
 primitive pseudo-cograph. 

 Assume, now that $G'$ is a fat primitive pseudo-cograph. By Prop.\
 \ref{prop:polcat-edges}, we can construct two ordered sets $Y=\{y_1, \ldots,
 y_{\ell-1}, y_{\ell}=v\}$ and $Z=\{z_1, \ldots, z_{m-1}, z_{m}=v\}$, $\ell,m \geq
 2$ such that $Y \cap Z=\{v\}$, $Y \cup Z=V$; and
 $\{y_i,y_j\} \in E$ precisely if $1 \leq i<j \leq \ell$ and $i$ is even; and
 $\{z_i,z_j\} \in E$ precisely if $1 \leq i<j \leq m$ and $i$ is even; and 
 $\{y,z\} \in E$ for all $y\in Y\setminus\{v\}$ and $z\in Z\setminus\{v\}$. 
 We now define a directed graph $G'=(V,E')$ by putting 
 $(y_i,y_j)\in E'$ for all $\{y_i,y_j\}\in E$ with $i<j$ and $i$ is even;
 $(z_j,z_i)\in E'$ for all $\{z_i,z_j\}\in E$ with $i<j$ and $i$ is even; 
 $(y,z)\in E'$ for all $y\in Y\setminus\{v\}$ and $z\in Z\setminus\{v\}$. 
 We emphasize that the ``order'' of the arcs $(z_j,z_i)\in E'$ is opposite
 to the ``order'' chosen for slim pseudo-cographs.   
 Note, that $v=y_\ell=y_m$ where $\ell$ and $m$ are the largest indices of the
 vertices in $Y$ and $Z$, respectively. By definition, there are is no arc
 $(v,y)$ for all $y \in Y$, no arc  $(z,v)$ for all $z \in Z$ 
 and no arc $(z,y)$ for all $y\in Y, z\in Z$ in $G'$. 
 
 Consider now two arcs $(a,b)$ and $(b,c)$ in $G'$. We show that $(a,c)\in E'$.
 We consider the following cases: 
 (i') either $a,b,c\in Y$ or $a,b,c\in Z$ and
 (ii') at most two of $a,b,c$ are contained in $Y$ and 
       at most two of $a,b,c$ are contained in $Z$. 
  The existence of the arc $(a,c)$ in $G'$ in Case (i') can be shown  
 by similar arguments as used for Case (i) for slim pseudo-cographs. 
 Hence, assume that Case (ii') holds.
 
 Suppose first that $v\in \{a,b,c\}$. Assume that $a=v$. Since there are no arcs
 $(v,y)$ for all $y\in Y$ in $G'$, we have $b\in Z$. Since there is no arc
 $(b,y)$ for all $y\in Y\setminus \{v\}$ it follows that $c\in Z$. Thus,
 $a,b,c\in Z$ which implies that this case cannot occur when we assume that at
 most two of $a,b,c$ are contained in $Z$. Assume that $b=v$. Since there is
 no arc $(v,y)$ for all $y \in Y$ and no arc $(z,v)$ for all $z \in Z$ in $G'$,
 $a\in Y\setminus \{v\}$ and $c\in Z\setminus\{v\}$ must hold. By construction,
 the arc $(a,c)$ exists in $G'$. Suppose now that $c=v$. By similar arguments as
 used for the case $a=v$, we can conclude that $a,b,c\in Y$; a case that cannot
 occur when we assume that at most two of $a,b,c$ are contained in $Y$.

 Hence, suppose that $v\notin \{a,b,c\}$. If $c\in Y$, then $(z,c)\notin E'$ for
 all $z\in Z$ implies that $b\in Y$ and, subsequently, $a\in Y$; a case that
 cannot occur when we assume that at most two of $a,b,c$ are contained in $Y$.
 Hence, $c\in Z\setminus \{v\}$ must hold. 
 By assumption, there are three cases we need to consider:
 $a\in Z\setminus \{v\}$ and thus,  $b\notin Z\setminus \{v\}$;
 $b\in Z\setminus \{v\}$ and thus,  $a\notin Z\setminus \{v\}$;
 and $a,b\notin Z\setminus \{v\}$. 
 Since there are no arcs $(z,y)$ for all $y\in Y, z\in Z$ in $G'$, 
 the case  $a\in Z\setminus \{v\}$ and, therefore, 
 $b\in Y\setminus \{v\}$ cannot occur. 
 Hence, suppose that  $b\in Z\setminus \{v\}$ and thus, $a\in Y\setminus \{v\}$. 
 By definition and since $c\in Z\setminus \{v\}$, the arc $(a,c)$ exists in $G'$. 
 Now assume that $a,b\notin Z\setminus \{v\}$ and thus, $a,b \in Y\setminus \{v\}$. 
 Again, since since $c\in Z\setminus \{v\}$, the arc $(a,c)$ exists in $G'$.

 Therefore, $G'$ is a transitive orientation of $G$ in case that $G$ is a fat
 primitive pseudo-cograph. 
 
 In summary, every primitive pseudo-cograph is a comparability graph. We note in
 passing that a primitive comparability graph and thus, a primitive
 pseudo-cograph, has only two transitive orientations, where one is obtained
 from the other by reversing the directions of all the edges
 \cite{CS:99,Golumbic-book:04}
 
 We continue with showing that every \gatex graph is a comparability graph. As
 argued in \cite{Spinrad:85} and \cite[Section 4]{CS:99}, a graph is a comparability graph if
 $G[M]/\Mmax(G[M])$ is a comparability graph for all prime modules $M$ of $G$.
 Since for \gatex graphs, Theorem\ \ref{thm:GalledTreeExplainable} implies that
 $G[M]/\Mmax(G[M])$ is a primitive pseudo-cograph and thus, a comparability
 graph, it follows that every \gatex graph is a comparability graph.
\end{proof}

By Obs.\ \ref{obs:complement-galled-tree}, the complement $\overline G$ of a
\gatex graph $G$ is \gatex. This and Thm.\ \ref{thm:gatex-comparabilty} imply 
\begin{corollary}\label{cor:gatex->co-comp}
  The complement of every \gatex graph is a comparability graph and thus, every
  \gatex graph is the complement of a comparability graph.
\end{corollary}

A graph $G=(V,E)$ is a \emph{permutation graph} if there exists a labeling
$\ell$ of the vertices of $G$ and a permutation $\pi=(\pi(1),\dots,\pi(|V|))$
such that for all $u,v\in V$ it holds that $\{u,v\} \in E$ if and only if
$\ell(u)>\ell(v)$ and $\pi^{-1}(\ell(u))<\pi^{-1}(\ell(v))$
\cite{pnueli_lempel_even_1971}.

\begin{theorem}
Every \gatex graph is a permutation graph.
\end{theorem}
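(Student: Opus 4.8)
The plan is to leverage the two preceding results, namely Theorem \ref{thm:gatex-comparabilty} (every \gatex graph is a comparability graph) and Corollary \ref{cor:gatex->co-comp} (the complement of a \gatex graph is also a comparability graph). The key classical fact I would invoke is the characterization of permutation graphs due to Pnueli, Lempel, and Even: a graph $G$ is a permutation graph if and only if both $G$ and its complement $\overline G$ are comparability graphs. This characterization is precisely the bridge that connects the two facts already established in the excerpt to the desired conclusion.

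First I would recall or cite the Pnueli--Lempel--Even characterization explicitly, since the definition given in the excerpt is in terms of permutations and inversions rather than in terms of transitive orientations of $G$ and $\overline G$. The nontrivial direction of that characterization states that if $G$ and $\overline G$ each admit a transitive orientation, then one can combine the two linear orders induced by these transitive orientations into a single permutation $\pi$ and a labeling $\ell$ that realize $G$ as a permutation graph: intuitively, a transitive orientation of $G$ yields a partial order whose comparabilities are exactly the edges of $G$, a transitive orientation of $\overline G$ yields a second partial order on the non-edges, and the two total extensions give the required pair of linear orders whose relative inversions encode adjacency. I would state that this is a well-known result and reference it rather than reprove it.

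The main step of the proof is then simply to assemble the pieces. Let $G$ be an arbitrary \gatex graph. By Theorem \ref{thm:gatex-comparabilty}, $G$ is a comparability graph, so $G$ admits a transitive orientation. By Corollary \ref{cor:gatex->co-comp}, $\overline G$ is also a comparability graph and hence admits a transitive orientation. Applying the Pnueli--Lempel--Even characterization, $G$ is a permutation graph. This establishes the claim for every \gatex graph $G$.

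The hard part, insofar as there is one, is purely expository rather than mathematical: I need to make sure the definition of permutation graph stated in the excerpt (in terms of a labeling $\ell$ and a permutation $\pi$ with the inversion condition) is recognized as equivalent to the ``$G$ and $\overline G$ both comparability graphs'' formulation. Since that equivalence is exactly the content of the cited theorem of Pnueli, Lempel, and Even, I would not grind through the translation between the two formulations but instead cite \cite{pnueli_lempel_even_1971} for it, so the whole argument collapses to the two-line combination of Theorem \ref{thm:gatex-comparabilty} and Corollary \ref{cor:gatex->co-comp}.
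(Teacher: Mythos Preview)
Your proposal is correct and matches the paper's proof essentially verbatim: the paper also combines Theorem~\ref{thm:gatex-comparabilty} and Corollary~\ref{cor:gatex->co-comp} and then invokes \cite[Thm.~3]{pnueli_lempel_even_1971} to conclude that $G$ is a permutation graph.
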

\begin{proof}
By Thm.\ \ref{thm:gatex-comparabilty} and Cor.\ \ref{cor:gatex->co-comp}, if $G$
is \gatex, both $G$ and $\overline G$ are comparability graphs. By \cite[Thm.\
3]{pnueli_lempel_even_1971}, $G$ is a permutation graph.
\end{proof}

A graph is \emph{perfectly orderable} if there is an ordering of the vertices of
$G$ such that a greedy coloring algorithm with that ordering optimally colors
every induced subgraph of the given graph. Such an ordering is called perfect.

\begin{theorem}\label{thm:perfect-order-linT}
 \gatex graphs are perfectly orderable and this ordering can be found in linear-time. 
\end{theorem}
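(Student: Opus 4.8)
The plan is to exploit two facts proved earlier: (i) every \gatex graph is a comparability graph (Thm.~\ref{thm:gatex-comparabilty}), and (ii) \gatex graphs can be recognized and a galled-tree that explains them can be constructed in linear time (Thm.~\ref{thm:GalledTreeExplainable}). It is classical (Chv\'atal) that every comparability graph is perfectly orderable: given a transitive orientation of $G$, any linear extension of the resulting partial order is a perfect order, since a greedy coloring along such an order never creates an ``obstruction'' (no induced $P_4$ $a-b-c-d$ oriented as $a\to b$, $c\to b$, $c\to d$ can occur, because transitivity of the orientation forbids this configuration). Thus perfect orderability follows immediately from Thm.~\ref{thm:gatex-comparabilty}, and the only real content of the present statement is the \emph{linear-time} computation of such an order.

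First I would recall the standard definition of a perfect order via the forbidden ``obstruction'': an order $<$ on $V(G)$ is perfect if and only if there is no induced $P_4 = a-b-c-d$ with $a<b$ and $d<c$. Given a transitive orientation $G'$ of $G$, define $<$ to be any topological sort (linear extension) of $G'$, so that $x<y$ whenever $(x,y)\in E'$. I would then verify that this order is perfect: if $a-b-c-d$ were an induced $P_4$ with $a<b$ and $d<c$, then the orientation would have to orient the edges in a way that, combined with transitivity, forces a chord (an edge between $a$ and $c$, or $b$ and $d$), contradicting that the $P_4$ is induced. This is the usual Chv\'atal argument and needs only to be stated, not re-derived in detail.

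For the algorithmic part, I would proceed constructively. By Thm.~\ref{thm:GalledTreeExplainable}, in linear time we can test whether $G$ is \gatex and, if so, build a labeled galled-tree $(N,t)$ explaining $G$. Rather than computing a transitive orientation of the whole graph abstractly, I would read the orientation off the structure exposed by $(N,t)$: the modular decomposition underlying $N$ tells us how the vertex set partitions into modules, and on each prime module the quotient is a primitive pseudo-cograph, for which the explicit transitive orientation is given in the proof of Thm.~\ref{thm:gatex-comparabilty} via the ordered sets $Y,Z$ of Prop.~\ref{prop:polcat-edges}. The global transitive orientation is then assembled from the orientations on each quotient together with the series/parallel structure (orienting all edges of a series node consistently), exactly as in the module-composition argument of Spinrad and \cite{CS:99} cited in that proof. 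A topological sort of this orientation, computable in $O(|V|+|E|)$ time, yields the perfect order.

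The main obstacle is purely bookkeeping rather than conceptual: one must argue that the per-module orientations can be combined and that the resulting directed graph is acyclic and transitive, so that a topological sort exists and can be produced within the linear-time bound. Since each prime quotient is a primitive pseudo-cograph with an explicitly described transitive orientation (from Thm.~\ref{thm:gatex-comparabilty}), and since composing transitive orientations along the modular decomposition preserves transitivity, the combined orientation is transitive; acyclicity is then automatic. I would therefore expect the proof to be short, citing Thm.~\ref{thm:gatex-comparabilty} for perfect orderability and Thm.~\ref{thm:GalledTreeExplainable} for the linear-time construction of $(N,t)$, and invoking the standard linear-time topological sort to extract the order.
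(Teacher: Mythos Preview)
Your proposal is correct and shares the paper's core idea: \gatex graphs are comparability graphs (Thm.~\ref{thm:gatex-comparabilty}), and any topological sort of a transitive orientation is a perfect order (Chv\'atal). Where you diverge from the paper is in establishing the linear-time bound. You construct the transitive orientation explicitly from the galled-tree $(N,t)$ and the modular decomposition, assembling it from the per-quotient orientations described in the proof of Thm.~\ref{thm:gatex-comparabilty} and then topologically sorting. The paper instead simply invokes the known fact that a transitive orientation of \emph{any} comparability graph can be computed in linear time \cite{CS:97}, followed by a linear-time topological sort \cite{CS:94,CS:99,Maffray2003}. Your route is more self-contained and exploits the specific structure of \gatex graphs, at the cost of the bookkeeping you mention (verifying that the composed orientation is globally transitive and acyclic); the paper's route is a one-line citation of a black-box result. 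Both are valid, and your version has the minor advantage of not depending on the heavier McConnell--Spinrad machinery.
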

\begin{proof}
Let $G$ be \gatex. By Thm.\ \ref{thm:gatex-comparabilty}, $G$ is a comparability
graph and thus, $G$ is transitive orientable. A transitive orientation of
comparability graphs can be constructed in linear-time \cite{CS:97}. A topological
ordering of a transitive orientation then yields a perfect order of $G$ and can
be constructed in linear-time, cf.\ \cite{CS:94,CS:99,Maffray2003}.
\end{proof}

\mh{
For a given graph $G=(V,E)$, a greedy coloring algorithm can be implemented
to run in $O(|V|+|E|)$ time, see \cite[Sec.~6.4]{TurauWeyer:2015}. This together
with Theorem \ref{thm:perfect-order-linT}
yields
\begin{corollary}
 The minimum number $\chi(G)$ of colors needed to color a \gatex graph $G$
 can be determined in linear-time. 
\end{corollary}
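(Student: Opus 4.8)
The plan is to combine the two linear-time ingredients that are already assembled in the excerpt. By Theorem \ref{thm:perfect-order-linT}, a \gatex graph $G$ is perfectly orderable, and a perfect order $\sigma$ of its vertices can be computed in $O(|V|+|E|)$ time. By the definition of perfect orderability, running the greedy (first-fit) coloring algorithm on $G$ using the order $\sigma$ produces an optimal coloring of $G$; in particular, the number of colors it uses equals the chromatic number $\chi(G)$. The only remaining point is the cost of the greedy pass itself.

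First I would invoke Theorem \ref{thm:perfect-order-linT} to obtain, in linear time, a perfect order $\sigma = v_1, v_2, \dots, v_{|V|}$ of $V$. Next I would feed $\sigma$ into a standard greedy coloring routine: process the vertices in the order $\sigma$, and assign to each $v_i$ the smallest color not already used by a neighbor of $v_i$ that precedes it in $\sigma$. By the cited implementation \cite[Sec.~6.4]{TurauWeyer:2015}, this pass runs in $O(|V|+|E|)$ time, since each edge is examined a constant number of times when scanning adjacency lists. Because $\sigma$ is a perfect order, this coloring is optimal, so the number of distinct colors returned is exactly $\chi(G)$.

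Putting the two linear-time steps in sequence gives an overall $O(|V|+|E|)$ algorithm that outputs an optimal coloring of $G$ and hence the value $\chi(G)$.

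There is essentially no technical obstacle here, as both components are quoted as black boxes; the only care needed is to confirm that the greedy step can indeed be realized within the stated time bound on a perfectly ordered input, which is exactly what \cite[Sec.~6.4]{TurauWeyer:2015} guarantees. The statement therefore follows immediately by chaining Theorem \ref{thm:perfect-order-linT} with this greedy-coloring complexity bound.
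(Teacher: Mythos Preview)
Your proof is correct and follows exactly the paper's approach: invoke Theorem~\ref{thm:perfect-order-linT} to obtain a perfect order in linear time, then run the $O(|V|+|E|)$ greedy coloring from \cite[Sec.~6.4]{TurauWeyer:2015} on that order to get an optimal coloring and hence $\chi(G)$.
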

}

A graph $G$ is \emph{distance-hereditary}, if the distances in any connected induced subgraph of $G$
are the same as they are in $G$.
In general, \gatex graphs are not distance-hereditary (as an example consider the pseudo-cograph
$\overline P_5$). It has been shown in \cite{HM:90} (see also \cite{DiS:12}) that 
a graph $G$ is distance-hereditary if and only if $G$ does not
contain the \emph{house} $\overline P_5$, the  \emph{gem} $K_1\join P_4$, a hole
and the domino $F_{17}\in \forbGT$ as an induced subgraph. 
By Thm.\ \ref{thm:F-free} and Lemma \ref{lem:odd-hole-free}, 
\gatex graphs are $F_{17}$- and hole-free. We summarize the
latter arguments into the following
\begin{proposition}
Let $G$ be a \gatex graph. Then, $G$ is distance-hereditary
if and only if $G$ is house- and gem-free.
\end{proposition}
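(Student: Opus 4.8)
The plan is to invoke the known forbidden-subgraph characterization of distance-hereditary graphs due to \cite{HM:90} and simply intersect it with what we already know about \gatex graphs. Recall the cited result states that $G$ is distance-hereditary if and only if $G$ contains none of the following as an induced subgraph: the house $\overline{P_5}$, the gem $K_1 \join P_4$, any hole, and the domino $F_{17}$. The strategy is therefore to show that for \gatex graphs, two of these four obstructions are automatically excluded, so that being distance-hereditary reduces precisely to excluding the remaining two, namely the house and the gem.

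First I would recall that by Lemma \ref{lem:odd-hole-free}, every \gatex graph is hole-free, so the ``hole'' obstruction in the characterization of \cite{HM:90} can never occur in a \gatex graph. Next I would observe that the domino $F_{17}$ is one of the elements of $\forbGT$; hence by Theorem \ref{thm:F-free}, every \gatex graph is $F_{17}$-free, so the ``domino'' obstruction is likewise automatically absent. These two observations are exactly the content of the sentence preceding the proposition in the excerpt.

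With those two obstructions removed for free, the characterization of \cite{HM:90} collapses: a \gatex graph $G$ is distance-hereditary if and only if $G$ additionally avoids the two remaining forbidden subgraphs, the house $\overline{P_5}$ and the gem $K_1 \join P_4$. In other words, among \gatex graphs, distance-heredity is equivalent to being house- and gem-free, which is precisely the claimed statement. The proof is thus a short chain of implications: combine the general characterization with Lemma \ref{lem:odd-hole-free} and Theorem \ref{thm:F-free} to discharge the hole and domino conditions, and what is left is exactly the house/gem condition.

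I expect no genuine obstacle here; the entire argument is a direct specialization of a cited theorem. The only point requiring a moment's care is the bookkeeping that the house $\overline{P_5}$ and the gem are genuinely \emph{not} forced to be excluded by the \gatex property alone, which is why they must remain as hypotheses in the proposition; indeed the excerpt itself notes that $\overline{P_5}$ is a \gatex pseudo-cograph, confirming that house-freeness is an extra, nonvacuous requirement. The proof text I would write is: ``By the characterization in \cite{HM:90}, $G$ is distance-hereditary if and only if $G$ is free of the house $\overline{P_5}$, the gem $K_1\join P_4$, every hole, and the domino $F_{17}$. By Lemma \ref{lem:odd-hole-free}, the \gatex graph $G$ is hole-free, and by Thm.\ \ref{thm:F-free} it is $F_{17}$-free since $F_{17}\in\forbGT$. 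Hence these two obstructions are vacuous for \gatex graphs, and $G$ is distance-hereditary if and only if it is additionally house- and gem-free.''
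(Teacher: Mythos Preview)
Your proposal is correct and follows essentially the same argument as the paper: the paper likewise invokes the forbidden-subgraph characterization of distance-hereditary graphs from \cite{HM:90}, then uses Lemma~\ref{lem:odd-hole-free} and Theorem~\ref{thm:F-free} to discharge the hole and domino ($F_{17}$) obstructions, leaving only the house and gem conditions. In fact the paper presents this reasoning in the paragraph immediately preceding the proposition rather than as a separate formal proof.
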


The intersection graph of family $\mathcal S$ of nonempty sets is obtained by
representing each set in $\mathcal S$ by a vertex and connecting two vertices by
an edge if and only if their corresponding sets intersect. The intersection
graph of a family of intervals on a linearly ordered set is called an
\emph{interval graph} \cite{Lekkeikerker1962,Golumbic-book:04}. A
\emph{circular-arc graph} is the intersection graph of a set of arcs on the
circle, see \cite{LS:09} for further details. Every interval graph is a
circular-arc graph \cite{LS:09}. 

\begin{theorem}\label{thm:C4freeGatex->intervall}
Every $C_4$-free \gatex graph is an interval graph and, thus a circular-arc graph.
\end{theorem}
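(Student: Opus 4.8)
The plan is to use the well-known characterization of interval graphs as exactly the chordal graphs whose complement is a comparability graph, which is due to Gilmore and Hoffman. This theorem states that a graph $G$ is an interval graph if and only if $G$ contains no induced $C_4$ and $\overline{G}$ is a comparability graph. I already have half of the second condition for free: by Corollary \ref{cor:gatex->co-comp}, the complement of every \gatex graph is a comparability graph. Thus the only extra ingredient I need is chordality, i.e., that a $C_4$-free \gatex graph contains no induced cycle of any length $\geq 4$.

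\medskip

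First I would establish that a $C_4$-free \gatex graph is chordal. A graph is chordal precisely when it contains no induced cycle $C_n$ with $n\geq 4$. By hypothesis there is no induced $C_4$. By Lemma \ref{lem:odd-hole-free}, every \gatex graph is hole-free, i.e., contains no induced $C_n$ for $n\geq 5$. The only remaining case is $C_3$, which is not a forbidden configuration for chordality. Hence, combining the $C_4$-freeness assumption with hole-freeness rules out all induced cycles of length at least $4$, so $G$ is chordal.

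\medskip

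Next I would invoke the Gilmore--Hoffman theorem: a graph $G$ is an interval graph if and only if $G$ is chordal and $\overline{G}$ is a comparability graph. Since $G$ is chordal (by the previous paragraph) and $\overline{G}$ is a comparability graph (by Corollary \ref{cor:gatex->co-comp}), both conditions are met, so $G$ is an interval graph. The final clause, that $G$ is then a circular-arc graph, is immediate from the statement in the excerpt that every interval graph is a circular-arc graph.

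\medskip

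The main obstacle is essentially bookkeeping rather than any genuine difficulty: one must be certain that hole-freeness plus $C_4$-freeness really covers every induced cycle of length $\geq 4$, and that the correct classical characterization of interval graphs is applied (the Gilmore--Hoffman characterization via chordality and co-comparability, as opposed to Lekkerkerker--Boland's forbidden-subgraph characterization). Provided the Gilmore--Hoffman theorem may be cited, the proof is short; the only subtlety is making sure the co-comparability hypothesis is supplied by Corollary \ref{cor:gatex->co-comp} rather than re-derived.
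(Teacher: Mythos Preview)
Your proof is correct and uses the same core ingredients as the paper: Corollary~\ref{cor:gatex->co-comp} for co-comparability and the Gilmore--Hoffman characterization of interval graphs. The only difference is that the paper cites Gilmore--Hoffman in the form ``$G$ is interval iff $G$ is $C_4$-free and $\overline{G}$ is a comparability graph,'' which makes your detour through chordality via Lemma~\ref{lem:odd-hole-free} unnecessary---the $C_4$-free hypothesis is fed directly into the theorem. Your route via chordality is of course equivalent and equally valid; it just invokes one more lemma than needed.
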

\begin{proof}
 Suppose that $G$ is a $C_4$-free \gatex graph. By Cor.\
 \ref{cor:gatex->co-comp}, $G$ is the complement of a comparability graph. By
 \cite[Thm.\ 2]{GH:64}, $H$ is an interval graph if and only if $H$ is
 $C_4$-free and $H$ is the complement of an comparability graph. Hence, $G$ is
 an interval graph and, by \cite{LS:09}, a circular-arc graph.
\end{proof}

A graph $G$ is \emph{very strongly perfect} if for every induced subgraph $H$ of
$G$, every vertex of $H$ belongs to a stable set of $H$ meeting all maximal
cliques in $H$, see \cite{Brandstadt1999} for further details. A \emph{Meyniel}
graph is a graph in which every odd cycle of length five or more has at least
two chords, i.e., edges connecting non-consecutive vertices of the cycle
\cite{MEYNIEL:84}. It has been shown in \cite{HOANG1987302} that a graph $G$ is
Meyniel if and only if $G$ is very strongly perfect. An \emph{odd-building
$B_n$} is a graph isomorphic to an odd cycle $C_n$ on $n\geq 5$ vertices with
just one chord where the chord makes a triangle with two consecutive edges on
the cycle. As an example, a \emph{house} is the odd-building $B_5$. As argued in
\cite{KSS:08}, a graph is Meyniel if and only if the graph contains neither a
hole on an odd number of vertices nor an odd-building as an induced subgraph.
Moreover, a vertex $x$ of a graph $G$ is \emph{soft} if $x$ is either not a
midpoint or not an endpoint of any $P_4$ of $G$. A graph $G$ is \emph{brittle},
if each induced subgraph of $G$ contains a soft vertex
\cite{CHVATAL1987349,Brandstadt1999}. 
\begin{theorem}
	Every house-free \gatex graph is Meyniel,  very strongly perfect and brittle. 
\end{theorem}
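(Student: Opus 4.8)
The plan is to derive all three properties from facts already available: \gatex graphs are hole-free (Lemma~\ref{lem:odd-hole-free}) and $\forbGT$-free (Thm.~\ref{thm:F-free}), together with the two external equivalences cited just before the theorem. By \cite{KSS:08} a graph is Meyniel precisely when it contains neither a hole on an odd number of vertices nor an odd-building, and by \cite{HOANG1987302} Meyniel coincides with very strongly perfect. So for the first two claims it suffices to exclude odd holes and odd-buildings from a house-free \gatex graph $G$.

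First I would rule out odd-buildings. Since $G$ is hole-free it contains no odd hole, so only odd-buildings remain. The case $B_5$ is exactly the house and is excluded by hypothesis. For odd $n\geq 7$, write $B_n$ as the odd cycle $v_1v_2\cdots v_nv_1$ together with the single chord $v_1v_3$ forming the triangle on $v_1,v_2,v_3$. Deleting the triangle tip $v_2$ leaves the vertex set $\{v_1,v_3,v_4,\dots,v_n\}$, and the only induced edges among these are the cycle edges $v_3v_4,\dots,v_{n-1}v_n,v_nv_1$ together with $v_1v_3$; these form an induced cycle on $n-1\geq 6$ vertices, i.e.\ a hole. Thus every $B_n$ with $n\geq 7$ already contains an induced hole, so the hole-free graph $G$ cannot contain it. Hence $G$ has neither an odd hole nor an odd-building, and \cite{KSS:08} gives that $G$ is Meyniel, while \cite{HOANG1987302} then yields that $G$ is very strongly perfect.

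For brittleness I would observe that the hypotheses force $G$ to be HHD-free (house-, hole-, and domino-free): $G$ is house-free by assumption, hole-free by Lemma~\ref{lem:odd-hole-free}, and, since the domino equals $F_{17}\in\forbGT$, $G$ is $F_{17}$-free by Thm.~\ref{thm:F-free}, i.e.\ domino-free. It is a classical result (see \cite{Brandstadt1999}) that every HHD-free graph is brittle; applying this to $G$ completes the argument.

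The two invocations of external characterizations carry essentially all the weight, so the proof is largely bookkeeping. The only genuine piece of content is the odd-building reduction, and the one subtlety to confirm there is that deleting the triangle tip yields an \emph{induced} cycle: no further chords survive, because $B_n$ has exactly one chord and that chord becomes an ordinary edge of the new cycle. Once this is checked, the remaining claims are immediate consequences of the cited equivalences.
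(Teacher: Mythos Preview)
Your proof is correct and follows essentially the same line as the paper's: both exclude odd holes via hole-freeness, reduce $B_n$ for $n\geq 7$ to an induced $C_{n-1}$ (a hole) by deleting the triangle tip, and then invoke the HHD-free $\Rightarrow$ brittle result (the paper cites \cite{HK:88} rather than \cite{Brandstadt1999}, but the content is identical).
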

\begin{proof}
	Suppose that $G$ is a house-free \gatex graph. By Lemma
	\ref{lem:odd-hole-free}, $G$ is hole-free. Hence, $G$ does, in particular,
	not contain a hole on an odd number of vertices. Suppose, for contradiction,
	that $G$ is not Meyniel. By the latter argument and the arguments preceding
	the statement of this theorem, $G$ must contain an odd building $B_n$ as an
	induced subgraph. Since $G$ is house-free, we have $n\geq 7$. It is easy to
	see that every odd building $B_n$, $n\geq 7$ contains an even cycle
	$C_{n-1}$ on at least six vertices as an induced subgraph. Thus, $G$
	contains holes; a contradiction. Therefore, $G$ is Meyniel. As shown in
	\cite{HOANG1987302}, $G$ is Meyniel if and only if $G$ is very strongly
	perfect. By \cite{HK:88}, if a graph does not contain a hole, a domino
	$F_{17}\in \forbGT$ or a house as an induced subgraph, then this graph is
	brittle \cite{HK:88}. Since $G$ is hole-free and house-free and, in addition
	\gatex, and thus $F_{17}\not\sqsubseteq G$, we can conclude that $G$ must be
	brittle.
\end{proof}

\begin{figure}[t]
	\begin{center}
			\includegraphics[width=0.6\textwidth]{./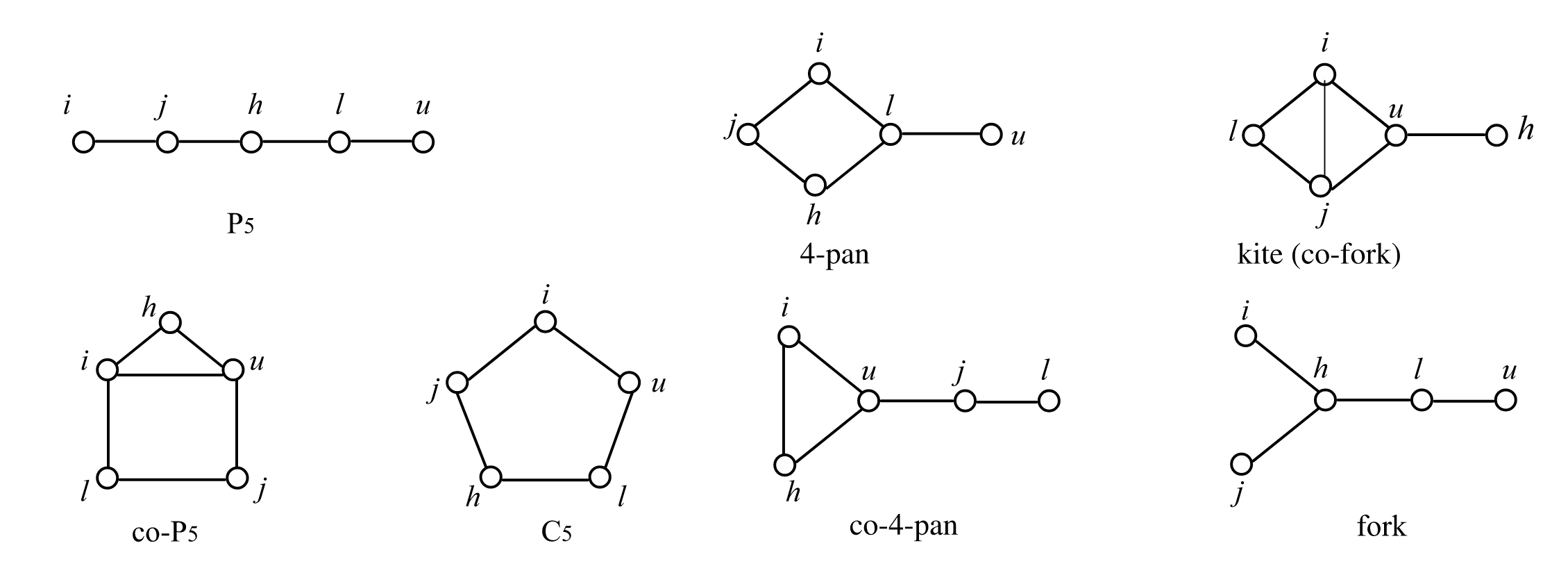}
	\end{center}
	\caption{
			The set $\mathfrak{F}_{\mathrm{P4sparse}}$ of forbidden subgraphs 
			that characterizes $P_4$-sparse graphs \cite[Fig.\ 3]{LIU201245}.
			 }
\label{fig:forbP4-sparse}
\end{figure}

Several generalizations of cographs are obtained by bounding the number of
$P_4$s in different ways. A graph $G$ is \emph{$P_4$-reducible} if every vertex
belongs to at most one induced $P_4$ of the graph \cite{JO:89}. A graph is
\emph{$P_4$-sparse} if every induced subgraph with exactly five vertices
contains at most one induced $P_4$ \cite{hoang1985perfect,Brandstadt1999}.
$P_4$-sparse graphs are characterized by the set of forbidden subgraph
$\mathfrak{F}_{\mathrm{P4sparse}}$ as shown in Fig.\ \ref{fig:forbP4-sparse}
and, moreover, $P_4$-reducible graphs are precisely the $P_4$-sparse graph that
do not contain $F_5$ and $F_6$ as an induced subgraph
\cite{JO:92,GV:97,Brandstadt1999}. Closer inspection of the forbidden subgraphs
in Fig.\ \ref{fig:forbP4-sparse} and the forbidden subgraphs in $\forbGT$ shows
that each $F_i\in \forbGT\setminus\{F_5,F_6\}$ contains at least one of the
graphs in $\mathfrak{F}_{\mathrm{P4sparse}}$ as an induced subgraph.
Consequently, $P_4$-sparse graphs cannot contain any of the graphs $F_i\in
\forbGT\setminus\{F_5,F_6\}$ as an induced subgraph.
In summary we obtain
\begin{theorem}
	Every $\{F_5,F_6\}$-free $P_4$-sparse graph or, equivalently, 
	every $P_4$-reducible graph is \gatex. 
\end{theorem}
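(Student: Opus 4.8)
The theorem asserts that every $P_4$-reducible graph (equivalently, every $\{F_5,F_6\}$-free $P_4$-sparse graph) is \gatex. By Theorem \ref{thm:F-free}, being \gatex is equivalent to being $\forbGT$-free, so the plan is to show that no $P_4$-reducible graph can contain any $F_i\in\forbGT$ as an induced subgraph. The paragraph preceding the statement already supplies the crucial combinatorial ingredient: the set $\mathfrak{F}_{\mathrm{P4sparse}}$ of forbidden subgraphs characterizing $P_4$-sparse graphs (Fig.\ \ref{fig:forbP4-sparse}), together with the fact that $P_4$-reducible graphs are exactly the $P_4$-sparse graphs that are additionally $\{F_5,F_6\}$-free. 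So the two formulations of the hypothesis coincide, and I may work with whichever is convenient.

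First I would establish the key containment claim asserted in the text: for each index $i$ with $F_i\in\forbGT\setminus\{F_5,F_6\}$, the graph $F_i$ contains at least one member of $\mathfrak{F}_{\mathrm{P4sparse}}$ as an induced subgraph. This is a finite verification over the $23$ graphs $F_i$ (the $25$ elements of $\forbGT$ minus $F_5,F_6$), each on at most $8$ vertices, checked against the fixed small list $\mathfrak{F}_{\mathrm{P4sparse}}$; it can be read off directly from Fig.\ \ref{fig:forb} and Fig.\ \ref{fig:forbP4-sparse}, or confirmed by the same brute-force induced-subgraph search used in Alg.\ \ref{alg:construct-F}. I expect this bookkeeping step to be the main (indeed the only real) obstacle, since it is the one part that is genuinely case-by-case rather than structural.

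Granting the containment claim, the argument closes cleanly. Since every $P_4$-sparse graph is $\mathfrak{F}_{\mathrm{P4sparse}}$-free, and each $F_i\in\forbGT\setminus\{F_5,F_6\}$ contains some element of $\mathfrak{F}_{\mathrm{P4sparse}}$, heredity of induced-subgraph containment forces every $P_4$-sparse graph to be $(\forbGT\setminus\{F_5,F_6\})$-free: if such a graph contained an induced $F_i$, it would also contain an induced member of $\mathfrak{F}_{\mathrm{P4sparse}}$, contradicting $P_4$-sparseness. Now impose the additional hypothesis that $G$ is $\{F_5,F_6\}$-free. Then $G$ avoids every graph in $\forbGT=(\forbGT\setminus\{F_5,F_6\})\cup\{F_5,F_6\}$ as an induced subgraph, i.e.\ $G$ is $\forbGT$-free. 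By Theorem \ref{thm:F-free}, $G$ is \gatex. Finally, by the characterization of $P_4$-reducible graphs as the $\{F_5,F_6\}$-free $P_4$-sparse graphs, the two stated forms of the hypothesis are interchangeable, which yields the theorem in full.
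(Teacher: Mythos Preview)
Your proposal is correct and follows essentially the same argument as the paper: the paper's proof is precisely the paragraph preceding the theorem, which verifies by inspection that every $F_i\in\forbGT\setminus\{F_5,F_6\}$ contains some member of $\mathfrak{F}_{\mathrm{P4sparse}}$, deduces that $P_4$-sparse graphs are $(\forbGT\setminus\{F_5,F_6\})$-free, and then combines this with the $\{F_5,F_6\}$-freeness hypothesis and Theorem~\ref{thm:F-free}. You have reproduced this line of reasoning faithfully, including the correct identification of the finite containment check as the only non-formal step.
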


\NEW{
Bonnet et al.\ recently introduced a novel parameter called ``twin-width'' as a
non-negative integer measuring a graphs distance to being a cograph
\cite{BKTW:21} that is based one the following characterization of cographs: A
graph is a cograph if it contains two vertices with the same neighborhood
(called twins), identify them, and iterate this process until one ends in a
$K_1$ \cite{bonnet_et_al:LIPIcs.ICALP.2021.35}. Since \gatex graphs form a
generalization of cographs but are still closely linked to cographs, we study
here the twin-width of \gatex graphs.

To formally define the twin-width, we first consider \emph{tri-graphs $G =
(V,B\cupdot R)$} where the edge set $E(G)$ is partitioned into a set \emph{$B$
of black edges} and \emph{$R$ of red edges}. For all $v\in V$ the \emph{red
degree} of $v$ is the degree of $v$ in $(V, R)$. A trigraph $G$ is a
\emph{$d$-trigraph} iff $G$ has maximum red degree at most $d$. Given a trigraph
$G = (V, B\cupdot R)$ and two vertices $u, v \in V$, we define the trigraph
$G/\{u, v\} = (V', B'\cupdot R'$) obtained by ``contracting'' $u, v$ into a
vertex $w$ as follows: $V' = (V \setminus \{u, v\}) \cup \{w\}$ such that $G -
\{u, v\} = (G/\{u, v\}) - \{w\}$ and with the following edges incident to $w$:
\begin{itemize}
\item $\{w,x\} \in B'$ if and only if $\{u,x\} \in B$ and $\{v,x\} \in B$,
\item $\{w,x\} \notin B'\cupdot R'$ if and only if $\{u,x\} \notin B \cupdot R$ and  $\{v,x\} \notin B \cupdot R$, and
\item $\{w,x\} \in R'$ otherwise.
\end{itemize}
In other words, when contracting two vertices $u, v$,  red edges remain red, and red edges
\mh{$\{w,x\}$} are
created for vertex $x$, if $x$ is not adjacent to $u$ and $v$ at the same time. We say that
$G/\{u, v\}$ is a \emph{contraction of $G$}. 

If both $G$ and $G/\{u, v\}$ are $d$-trigraphs, $G/\{u, v\}$ is a $d$-contraction.
A (tri)graph $G$ on $n$ vertices is $d$-collapsible if there exists a sequence of $d$-contractions
which contracts $G$ to a single vertex. More precisely, there is a sequence of $d$-trigraphs
$G = G_n, G_{n-1}, \dots , G_2, G_1$ such that $G_{i-1}$ is a contraction of $G_i$ (hence $G_1$ is the singleton
graph). The minimum $d$ for which $G$ is $d$-collapsible is the \emph{twin-width} of $G$, denoted by $\tww(G)$.
}

\NEW{
\begin{proposition}\label{pr-twin}
Every primitive pseudo-cograph has twin-width $1$. 
\end{proposition}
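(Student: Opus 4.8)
The plan is to prove $\tww(G)=1$ by establishing the two bounds separately. For the lower bound, note that a primitive pseudo-cograph contains an induced $P_4$ (indeed $P_4$ is the smallest primitive graph and no cograph is primitive), so $G$ is not a cograph; since the graphs of twin-width $0$ are exactly the cographs, this gives $\tww(G)\geq 1$. It therefore remains to exhibit, for every primitive pseudo-cograph, a contraction sequence of maximum red degree at most $1$. Here I would first reduce to a single case: the contraction rule is symmetric under exchanging black edges with non-edges while fixing red edges, so complementation maps a $d$-contraction sequence to a $d$-contraction sequence and hence $\tww(G)=\tww(\overline G)$. By Theorem~\ref{thm:prop_pc} complementation turns a slim pseudo-cograph into a fat one and vice versa (disjoint union becomes join), so it suffices to treat the slim case.

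So assume $G$ is a slim $(v,G_1,G_2)$-polar-cat with $G_1=G[Y]$, $G_2=G[Z]$, $Y\cap Z=\{v\}$ and $Y\cup Z=V(G)$; by the definition of a polar-cat (together with Theorem~\ref{thm:CharPolCat}) both $G_1$ and $G_2$ are caterpillar-explained cographs with $v$ part of a cherry, and by Proposition~\ref{prop:polcat-edges} there is no edge between $Y\setminus\{v\}$ and $Z\setminus\{v\}$. The core tool I would isolate is a collapse procedure for a caterpillar-explained cograph $C$ in the presence of one distinguished external vertex $w$ adjacent to an arbitrary subset of $V(C)$: because $C$ is caterpillar-explained, the leaf sets of the subtrees hanging along the inner path of its cotree form a strictly nested chain of strong modules $M_r\subsetneq M_{r-1}\subsetneq\dots\subsetneq M_1=V(C)$ starting from the deepest cherry. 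Contracting $C$ along this chain --- first the cherry $M_r$, then absorbing one vertex at a time to build $M_{r-1},\dots,M_1$ --- keeps a single \emph{active blob} $M_i$ at every step. Since each $M_i$ is a module of $C$, it has uniform adjacency to every not-yet-absorbed vertex of $C$, so no red edge is ever created inside $C$; the only vertex that can distinguish the two halves of a contraction is $w$, which yields at most one red edge, carried by the active blob. Hence throughout this procedure all red edges are incident to $w$, the blob and $w$ each have red degree at most $1$, and every other vertex has red degree $0$.

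With this procedure in hand the contraction sequence for the slim graph assembles in two phases. In Phase~1 I would apply it to $C=G[Z\setminus\{v\}]$ (a caterpillar-explained cograph) with $w=v$; since no vertex of $Z\setminus\{v\}$ is adjacent to any vertex of $Y\setminus\{v\}$, no red edges spill over to the $Y$-side, and $Z\setminus\{v\}$ collapses to a single vertex $\tilde z$ joined to $v$ by a red edge, with all red degrees at most $1$. The graph is now the cograph $G[Y]$ together with the pendant $\tilde z$, whose only incidence is the red edge to $v$. In Phase~2 I would apply the same procedure to $C=G[Y]$ with external vertex $w=\tilde z$ (adjacent only to $v$): only the unique blob containing $v$ can be distinguished by $\tilde z$, so again all red edges are incident to $\tilde z$ and red degrees stay at most $1$; this collapses $G[Y]$ to a single vertex $\tilde y$, and a final contraction of $\tilde y$ with $\tilde z$ finishes the collapse. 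Thus $\tww(G)\le 1$, and combined with the lower bound $\tww(G)=1$. The main obstacle --- and the step I would spend the most care on --- is the collapse procedure of the second paragraph: one must verify the single-active-blob invariant and that red edges remain confined to the external vertex, which hinges precisely on the caterpillar structure of $G_1$ and $G_2$, since it is exactly this nested chain of modules that prevents several mixed blobs (and hence red degree $\geq 2$) from coexisting.
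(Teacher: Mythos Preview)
Your proof is correct and follows the same broad outline as the paper's: both establish $\tww(G)\ge 1$ from the non-cograph property, reduce to the slim case via complementation (using that twin-width is complementation-invariant), and then exhibit a $1$-contraction sequence by collapsing the two sides $G_1,G_2$ of the polar-cat one after the other.

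The execution differs in two respects. First, the order: the paper collapses the $Y$-side down to $\{y_1,v\}$ (leaving $v$ uncontracted) and then repeatedly contracts $z_{m-1}$ into $v$, whereas you collapse $Z\setminus\{v\}$ to a single blob $\tilde z$ first and then collapse all of $G[Y]$ against $\tilde z$. Second, the level of abstraction: the paper works directly with the explicit index structure of Proposition~\ref{prop:polcat-edges}, checking at each step (e.g.\ $N(y_3)\symdif N(y_1)=\{y_2\}$) that exactly one red edge appears; you instead isolate a reusable lemma---a caterpillar-explained cograph $C$ with one external vertex $w$ can be collapsed along its nested chain of strong modules $M_r\subsetneq\cdots\subsetneq M_1$ so that the only red edge ever present is between $w$ and the active blob---and apply it twice. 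Your formulation makes transparent \emph{why} red degree stays bounded (each $M_i$ is a module of $C$, so absorbing the next vertex creates no internal red edges), and it explains the role of condition~(C3): you need $v$ to lie in the deepest cherry of $G[Y]$ precisely so that the pre-existing red edge $\{\tilde z,v\}$ is absorbed into the active blob from the very first contraction of Phase~2. The paper's version is more self-contained and avoids stating a separate lemma, but your module-based argument is cleaner and would adapt more readily to related settings.
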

\begin{proof}
Let $G$ be a primitive $(v,G[Y],G[Z])$-pseudo-cograph.
Since $G$ is not a cograph, $\tww(G)\geq 1$ (cf.\ \cite{BKTW:21}). We next show that $G$ has twin-width at most $1$.
Twin-width is invariant by complementation \cite{BKTW:21}. Hence, we may assume
that w.l.o.g.\ $G$ is slim (otherwise, take the complement of $G$; cf.\
Thm.\ \ref{thm:prop_pc}). Let $Y=\{y_1, \ldots, y_\ell=v\}$, $Z=\{z_1, \ldots,
z_m=v\}$, where the vertices are labelled according to
Proposition~\ref{prop:polcat-edges}. In what follows, $A\symdif B$ denotes
the symmetric difference of two set $A$ and $B$. We 
show now that $G$ is $1$-collapsible and provide
a sequence of $1$-contractions transforming $G$ into the singleton graph $K_1$.
in such a way that, after each contraction, the resulting graph contains exactly one red edge.
We do this via a two steps process.

\emph{Step 1:} We provide a sequence of $1$-contractions to transform $G$
into a slim $(v,G[\{y_1,v\}],G[Z])$-pseudo cograph $G'$ such that $G'$
either has no red edge or has $\{y_1,v\}$ as its unique red edge. If $|Y|=2$
we put $G'\coloneqq G$ and immediately proceed with \emph{Step 2}. Suppose for now that $|Y|>2$.

If $|Y|=3$, then $N(y_1) \symdif N(y_2)=\{v\}$. In this case, we contract $y_2$
into $y_1$ (i.e, we keep the vertex label $y_1$ for the new vertex). Since
$N(y_1) \symdif N(y_2)=\{v\}$, $G_1\coloneqq G/\{y_2, y_1\}$ is a $1$-contraction with one
resulting red edge $\{y_1,v\}$. Moreover, one can easily verify 
that $G_1$ is a slim $(v,G[\{y_1,v\}],G[Z])$-pseudo cograph. In this case, we put $G'\coloneqq G_1$, and continue with \emph{Step 2}.

If $|Y| \geq 4$, then by definition of slim pseudo-cographs, $N(y_3) \symdif
N(y_1) = \{y_2\}$. We contract $y_3$ into $y_1$ (i.e, we keep the vertex label
$y_1$ for the new vertex). Since $N_G(y_3) \symdif N_G(y_1) = \{y_2\}$,
$G_1\coloneqq G/\{y_3, y_1\}$ is a $1$-contraction with one resulting red edge
$\{y_1,y_2\}$ in $G_1$. If $|Y|=4$, then $G_1$ is a slim
$(v,G[\{y_1,y_2,v\}],G[Z])$-pseudo-cograph with a single red edge $\{y_1,y_2\}$.
In this case, we can apply to $G_1$ the construction as described for the
case $|Y|=3$ to obtain a sequence of $1$-contractions until we end in 
a slim $(v,G[\{y_1,v\}],G[Z])$-pseudo cograph $G'$ and continue with \emph{Step 2}.
If otherwise, $|Y|>4$, then
$N_{G_1}(y_4)=N_{G_1}(y_2)=\{y_1\}$. We now contract $y_4$ into $y_2$ (i.e, we
keep the vertex label $y_2$ for the new vertex). Since
$N_{G_1}(y_4)=N_{G_1}(y_2)=\emptyset$ and since $\{y_1,y_2\}$ is a red-edge in
$G_1$, the resulting graph $G_2 \coloneqq G_1/\{y_4,y_2\}$ has $\{y_1,y_2\}$ as
unique red edge. Moreover, one easily verifies that $G_2$ is a
slim $(v,G[Y'],G[Z])$-pseudo-cograph for $Y' = Y-\{y_2,y_3\}$. Hence, we can
repeat the latter operations on $G_2$ (all resulting in $1$-contraction)
until we end in a slim $(v,G[\{y_1,v\}],
G[Z])$-pseudo-cograph $G'$ with $\{y_1,v\}$ as its unique red edge.

\emph{Step 2:} We provide now a sequence of $1$-contractions to transform
$G'$ into the singleton graph $K_1$ by a sequence of $1$-contractions, in
such a way that, after each contraction, the resulting graph has $\{y_1,v\}$ as
unique red edge. We first contract $z_{m-1}$ into $v$ (i.e, we keep the
vertex label $v$ for the new vertex). Note that $N_{G'}(z_{m-1})\symdif
N_{G'}(v) = \{y_1\}$. Hence, $G'_1 \coloneqq G'/\{z_{m-1}, v\}$ is a
1-contraction with one resulting red edge $\{y_1,v\}$. In particular, $G'_1$ is
a slim $(v,G'[Y],G'[Z-\{z_{m_1}\}])$-pseudo-cograph. Thus, we can repeat the
latter process on $G'_1$, until we end in a graph $G''$ that contains only the
vertices $y_1$ and $v$ and the red edge $\{y_1,v\}$. Finally, contraction of
$y_1$ and $v$ yields then the single vertex graph $K_1$, as desired. Since
all steps result in a $1$-contraction and since we end in a $K_1$, the initial
primitive pseudo-cograph $G$ is 1-collapsible.
\end{proof}
}

\begin{theorem}
Every \gatex graph $G$  has twin-width at most $1$. 
	In particular, $\tww(G)=1$ for a \gatex graph $G$ if and only if $G$ is not a cograph.
\end{theorem}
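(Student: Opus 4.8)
The plan is to run an induction on $|V(G)|$ that exploits how twin-width interacts with modular decomposition. The single fact I would isolate first, and which I expect to be the main (indeed only) delicate point, is a \emph{locality} property: if $u,v$ lie in a common module $M$ of a trigraph whose red edges all lie inside $M$, then after contracting $u,v$ into $w$ the set $(M\setminus\{u,v\})\cup\{w\}$ is again a module and every red edge still lies inside it. This follows immediately from the definition of a module: for any $x\notin M$, the vertex $x$ is adjacent (in black) to both $u$ and $v$ or to neither, so the contraction rule produces a black edge or a non-edge to $w$, never a red one; the only new red edges can arise towards vertices inside $M$. Consequently, all red edges created while collapsing the \emph{interior} of a module stay confined to that module, the module property is preserved along the whole collapse, and the internal trigraph on $M$ evolves exactly as it would if $M$ were collapsed in isolation.

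With this in hand the induction is routine. The base case $|V(G)|=1$ is a $K_1$ and is $0$-collapsible. For the step, let $\Mmax(G)=\{M_1,\dots,M_k\}$ with $k\ge 2$. Each $G[M_i]$ is an induced subgraph of the \gatex graph $G$, hence \gatex by Lemma \ref{lem:gt-heri}, and has strictly fewer vertices; by the induction hypothesis it admits a sequence of $1$-contractions collapsing it to a single vertex, so that every intermediate trigraph has maximum red degree at most $1$ (this is exactly what a $1$-contraction demands). Replaying this sequence \emph{inside} $G$, one module at a time and fully collapsing $M_i$ before touching $M_{i+1}$, the locality property guarantees that the red degree of any vertex of $M_i$ equals its red degree in the standalone collapse, while all other vertices keep red degree $0$. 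Hence every intermediate trigraph is a $1$-trigraph. After collapsing $M_1,\dots,M_k$ we are left with a red-edge-free trigraph on the $k$ quotient vertices whose black edges are precisely those of $G/\Mmax(G)$, since uniform adjacency between distinct modules forbids red edges between distinct collapsed modules.

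It remains to collapse the quotient $G/\Mmax(G)$. If the root of the MDT is labelled $0$ or $1$, this quotient is $\overline{K_k}$ or $K_k$, a cograph, which collapses through (false or true) twins with red degree $0$ throughout. If the root is prime, then $V(G)$ is itself a prime module and Theorem \ref{thm:GalledTreeExplainable} shows $G/\Mmax(G)$ to be a primitive pseudo-cograph, so Proposition \ref{pr-twin} supplies a $1$-contraction sequence collapsing it to $K_1$; as no other vertices remain, this keeps the red degree at most $1$. In every case $G$ is $1$-collapsible, whence $\tww(G)\le 1$. The ``in particular'' statement then follows because $\tww(H)=0$ if and only if $H$ is a cograph (cf.\ \cite{BKTW:21}): a \gatex graph already has twin-width at most $1$, so its twin-width equals $1$ exactly when it fails to be a cograph.
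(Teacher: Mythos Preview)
Your proof is correct, but it takes a genuinely different route from the paper's. The paper invokes an external result \cite[Thm.~3.1]{TWW:sat} stating that the twin-width of a graph equals the maximal twin-width of its primitive induced subgraphs; combined with Theorem~\ref{thm:GatexIFFallPrimitive=PsC} (every primitive induced subgraph of a \gatex graph is a pseudo-cograph) and Proposition~\ref{pr-twin}, this finishes the proof in three lines. You instead give a self-contained inductive construction: collapse each maximal strong module to a point using the inductively supplied $1$-contraction sequence, relying on your locality observation to confine red edges to the module being processed, and then collapse the quotient (handled by Proposition~\ref{pr-twin} in the prime case and by twin-contractions in the series/parallel case). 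In effect you are re-proving, in the special case needed here, the direction of \cite[Thm.~3.1]{TWW:sat} that the paper cites as a black box. Your approach buys an explicit contraction sequence and avoids the external dependency; the paper's approach is shorter and delegates the modular-decomposition bookkeeping to the cited reference. Both ultimately rest on Proposition~\ref{pr-twin} for the primitive case.
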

\begin{proof}
$G$ is a cograph precisely if $\tww(G) = 0$ (cf.\ \cite{BKTW:21}). Assume that
$G$ is not a cograph. 
By \cite[Thm.\ 3.1]{TWW:sat},  the twin-width of a graph coincides with the maximal
twin-width of its primitive induced subgraphs. 
By Theorem~\ref{thm:CharPolCat} and Theorem~\ref{thm:GatexIFFallPrimitive=PsC}, 
all primitive induced subgraphs of \gatex graph are pseudo-cographs. 
Moreover, a \gatex graph that is not a cograph admits at least one primitive induced 
subgraph. This together Proposition~\ref{pr-twin} implies that $\tww(G) =  1$. 
\end{proof}

\section{Linear-time algorithms for hard problems  in \gatex graphs}

A \emph{clique} of $G$ is an inclusion-maximal complete subgraph $G$. The size
of a maximum \emph{clique} of a graph $G$ is denoted by $\omega(G)$. For
simplicity, we denote with $\omega(w)$ the size of a maximum clique in
$G[L(N(w))]$.  A
\emph{coloring} of $G$ is a map $\sigma\colon V(G)\to S$, where $S$ denotes a
set of colors, such that $\sigma(u)\neq \sigma(v)$ for all $\{u,v\}\in E(G)$.
The minimum number of colors needed for a coloring of $G$ is called the
\emph{chromatic number} of $G$ and denoted by $\chi(G)$. A subset $W\subseteq
V(G)$ of pairwise non-adjacent vertices is called \emph{independent set}. The
size of a maximum independent set in $G$ is called the \emph{independence
number} of $G$ and denoted by $\alpha(G)$. In general, determining the
invariants $\omega(G)$, $\chi(G)$ and $\alpha(G)$ for arbitrary graphs is an
NP-hard task \cite{garey1979computers}. In contrast, we show here that $\omega(G)$,
$\chi(G)$ and $\alpha(G)$ can be computed in linear-time for \gatex graphs $G$.
The crucial idea for the linear-time algorithms  is
to avoid working on the \gatex graphs $G$ directly, but to use the the galled-trees that
explain $G$ as a guide for the algorithms to compute these invariants. 

Galled-trees that explain a given \gatex graph $G$ can be obtained from the 
modular decomposition trees $(\MDT_G,t_G)$ by replacing  prime vertices
locally by simple rooted cycles. We will formalize this concept in more
detail in Def.\ \ref{def:pvr}. To this end, we need additional definitions. 
We denote with $P^1(C),P^2(C)$ the \emph{sides of $C\subseteq N$}, 
i.e., the two directed paths $C$ with the same start-vertex $\rho_C$ and
end-vertex $\eta_C$, and whose vertices distinct from $\rho_C$ and $\eta_C$ are
pairwise distinct. Moreover
	 $G_1(M), G_2(M)\subseteq G[M]$ will denote the subgraphs induced by leaf-descendants of
	the vertices in $P^1(C_M)-\rho_{C_M}$ and $P^2(C_M)-\rho_{C_M}$, respectively.

A galled-tree $N$ with leaf-set $L$ is \emph{elementary} if it contains a single 
rooted cycle $C$ of length $|L|+1$ with root $\rho_C = \rho_N$
and single hybrid-vertex $\eta_C\in V(C)$ and additional edges
$\{v_i,x_i\}$ such that every vertex $v_i\in V(C)\setminus \{\rho_C\}$ is adjacent to
unique vertex $x_i\in L$. 
A labeling $t$ (or equivalently $(N,t)$) is \emph{quasi-discriminating} if $t(u)\neq t(v)$ for all $(u,v)\in$ with $v$ not
being a hybrid-vertex. 
A galled-tree is \emph{strong} if it \emph{does not} contain
cycles of the following form:  (i) $P^1(C)$ or $P^2(C)$ consist of $\rho_C$ and $\eta_C$ only or
(ii) both $P^1(C)$ and $P^2(C)$ contain only one vertex distinct from $\rho_C$ and
$\eta_C$. 

To obtain a galled-tree by locally replacing prime vertices $v$ in $(\MDT_G,t_G)$
with rooted cycles, we first compute for  $M = L(\MDT_G(v))$ 
the quotient  $H = G[M]/\Mmax(G[M])$. In particular, $H$ is primitive and
can, therefore, be explained by a strong elementary quasi-discriminating galled-tree $(N_v,t_v)$ (cf.\ \cite[Thm.\ 6.10]{HS:22}). 
We then use the rooted cycles in  $(N_v,t_v)$ to replace $v$ in $(\MDT_G,t_G)$. 
The latter is formalized as follows.

\begin{definition}[prime-vertex replacement (pvr) networks]
  \label{def:pvr}
    Let $G$ be a \gatex graph and $\mathcal{P}$ be the set of all prime vertices in
	$(\MDT_G,t_G)$. A \emph{prime-vertex replacement} (\emph{pvr}) networks $(N,	
	t)$ of $G$ (or equivalently, $(\MDT_G,t_G)$) is obtained by the following procedure:
\begin{enumerate}[noitemsep] 
\item For all $v\in \mathcal{P}$, let $(N_v,t_v)$ be a
	  strong quasi-discriminating elementary network with root $v$
	  that explains $G[M]/\Mmax(G[M])$ with $M = L(\MDT_G(v))$.			
  \label{step:Gv} \smallskip
\item For all $v\in \mathcal{P}$, remove all edges $(v,u)$ with
  		$u\in \child_{\MDT_G}(v)$ from $\MDT_G$ to obtain the forest
  		$(T',t_G)$ and \label{step:T'}  
		add $N_v$ to $T'$ by identifying the root
  	of $N_v$ with $v$ in $T'$ and each leaf $M'$ of $N_v$ with the
  	corresponding child $u\in \child_{\MDT_G}(v)$ for which $M' = L(\MDT_G(u))$.  \smallskip

\noindent
 \emph{This results in the pvr graph $N$.}\smallskip
\item \label{step:color} 
 Define the labeling $t\colon V(N)\to \{0,1,\odot\}$ by putting, for
  all $w\in V(N)$,
  \begin{equation*}
    t(w) =
    \begin{cases} 
      t_G(v) &\mbox{if } v\in V(\MDT_G)\setminus \mathcal P \\
      	t_v(w) &\mbox{if } w \in V(N_v)\setminus X \text{ for some } v\in \mathcal P
    \end{cases}
  \end{equation*}
\end{enumerate}
\end{definition}


We later reference, we provide

\begin{observation}\label{ref:properties_C_pvr}
Let $(N,t)$ be a pvr-network of a \gatex graph $G$. Then, 
\begin{itemize}
\item $(N,t)$ is a galled tree that explains $G$ \cite[Prop.\ 7.4]{HS:22}.
\item There is a 1:1 correspondence between the cycle $C$ in $N$ and 
	  prime modules $M$ of $G$ \cite[Prop.\ 8.3]{HS:22}.
	  
	  Hence, we can define 
      $C_M$ as the unique cycle in $N$ corresponding to prime module $M$. 
\end{itemize}
Moreover, let $v$ be a prime vertex associated with the prime module $M_v=L(\MDT_G(v))$ module 
and let $C \coloneqq C_{M_v}$. Since we used strong elementary networks for the replacement
of $v$, one easily verifies that:
\begin{itemize}
\item $C$ has a unique root $\rho_C$ and a unique hybrid-vertex $\eta_C$. 
\item $\eta_C$ has precisely one child and precisely two parents. 
\item All vertices $v\neq \eta_C$ have two children and one parent. 

	 In particular, 
	  all vertices $v\neq \eta_C,\rho_C$
	   have one child $u'$ located in $C$
	  and one child $u''$ that is not located in $C$
	  and these children satisfy  $L(N(u'))\cap L(N(u''))=\emptyset$
	  and it holds that $\lca_N(x,y)=w$ for all $x\in L(N(u'))$ and $y\in L(N(u''))$.

	  Both children $u'$ and $u''$ of 
	  $\rho_C$ are located in $C$ and satisfy  $L(N(u'))\cap L(N(u''))=L(N(\eta_C))$. 

\end{itemize}

\end{observation}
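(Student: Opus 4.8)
The first two bullet points are exactly \cite[Prop.\ 7.4]{HS:22} and \cite[Prop.\ 8.3]{HS:22}; the plan is to cite these and use the stated $1$:$1$ correspondence to define $C_M$. The remaining structural claims I would verify by unwinding the pvr-construction of Def.\ \ref{def:pvr} together with the definition of the strong elementary networks used in Step~\ref{step:Gv}. Recall that the elementary network $(N_v,t_v)$ replacing $v$ consists of a single cycle $C$ of length $|\Mmax(G[M_v])|+1$ with $\rho_C = v$ and a single hybrid $\eta_C$, where every cycle-vertex $w\neq\rho_C$ carries exactly one pendant leaf while $\rho_C$ carries none; being \emph{strong} guarantees that both sides $P^1(C)$ and $P^2(C)$ contain internal vertices, so that this non-degenerate structure applies.

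Reading off the degrees forced by (N1)--(N3), the root $\rho_C$ has exactly the two cycle-children that begin $P^1(C)$ and $P^2(C)$; each internal cycle-vertex $w\neq\rho_C,\eta_C$ is a tree-vertex whose parent is its cycle-predecessor and whose two children are its cycle-successor and its pendant leaf; and $\eta_C$ is the unique hybrid, with its two cycle-predecessors as parents and its single pendant leaf as child. After Step~\ref{step:T'} identifies the pendant leaf of each $w\neq\rho_C$ with the child $u''\in\child_{\MDT_G}(v)$ satisfying $L(\MDT_G(u''))\in\Mmax(G[M_v])$, these counts are preserved in $N$: the cycle $C$ keeps its unique root and unique hybrid; $\eta_C$ has two parents and one child; and every $w\neq\eta_C$ has exactly two children. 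For the parent count I would note that every $w\neq\rho_C,\eta_C$ has its cycle-predecessor as unique parent, while $\rho_C = v$ inherits its $\MDT_G$-parent and hence has one parent precisely when $M_v\neq V(G)$.

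To treat the children of an internal vertex $w\neq\rho_C,\eta_C$, observe that one child $u'$ remains on $C$ and the other, $u''$, is the off-cycle module with $L(N(u''))$ equal to the block $B\in\Mmax(G[M_v])$ attached to $w$. The point to check is $L(N(u'))\cap L(N(u''))=\emptyset$: since $L(N(u'))$ is the union of the blocks attached to the cycle-vertices on the path from $u'$ down to $\eta_C$, and the blocks of $\Mmax(G[M_v])$ are pairwise disjoint and do not include $B$, disjointness follows. Consequently $w$ is a common ancestor of any $x\in L(N(u'))$ and $y\in L(N(u''))$; as the only ancestors of $y$ strictly below $w$ lie in the subtree rooted at $u''$, which by disjointness contains no ancestor of $x$, uniqueness of $\lca_N$ in galled-trees forces $\lca_N(x,y)=w$.

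Finally, for $\rho_C$ both children $u',u''$ begin $P^1(C)$ and $P^2(C)$ and hence lie on $C$. Here $L(N(u'))$ (resp.\ $L(N(u''))$) is the union of the blocks attached to the cycle-vertices on that side together with $L(N(\eta_C))$, since both sides descend to the unique hybrid $\eta_C$ and every leaf below $\eta_C$ is reachable from each side. As the blocks on the two sides are pairwise disjoint, the intersection collapses to exactly $L(N(\eta_C))$, as claimed. The only genuine obstacle here is bookkeeping: one must track how a single pendant leaf of $(N_v,t_v)$ turns into an entire module-subtree after Step~\ref{step:T'}, and observe that $\eta_C$ is the sole vertex whose descendants are shared across the two sides of $C$; once the disjointness of the blocks of $\Mmax(G[M_v])$ is invoked, every remaining lca- and intersection-statement follows mechanically.
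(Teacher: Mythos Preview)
Your proposal is correct and matches the paper's approach: the paper gives no proof at all for this Observation beyond citing \cite[Prop.\ 7.4 and 8.3]{HS:22} for the first two items and remarking that the remaining structural claims ``one easily verifies'' from the use of strong elementary networks in Def.~\ref{def:pvr}. Your write-up is precisely that verification spelled out, and the bookkeeping (degree counts from (N1)--(N3), tracking how pendant leaves become module-subtrees after Step~\ref{step:T'}, and using disjointness of the blocks in $\Mmax(G[M_v])$ for the intersection and $\lca$ claims) is accurate; your caveat that $\rho_C$ has one parent only when $M_v\neq V(G)$ is in fact more careful than the paper's own statement.
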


We start  first with the computation of the size $\omega(G)$ of maximum
cliques in \gatex graphs. 

In the upcoming proofs we may need to compute the join $H'\join H$ where $H$ is the empty graph
and we put, in this case, $H'\coloneqq H'\join H$.

\begin{lemma}\label{lem:max-clique-in-eta}
	Let $G$ be a \gatex graph that is explained by the pvr-network $(N,t)$ and
	suppose that $G$ contains a prime module $M$. Put $L_{\eta}\coloneqq
	L(N(\eta_{C_M}))$ and let $H\in \{G[M], G_1(M)\}$.
	If $H$ contains a maximum clique $K$ with
	vertices in $L_\eta$, then $V(K)\cap L_\eta$ induces a maximum clique in
	$G[L_\eta]$ and $(V(K)\setminus L_\eta)\cupdot V(K')$
	induces a maximum clique in $H$ for every   maximum clique $K'$ in $G[L_\eta]$. 
\end{lemma}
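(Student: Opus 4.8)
The plan is to reduce the entire statement to the single structural fact that $L_\eta$ is a \emph{module} of $H$, and then to run a routine module-based clique argument that is uniform in the two choices $H\in\{G[M],G_1(M)\}$.

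First I would establish that $L_\eta=L(N(\eta_{C_M}))$ is a module of $H$ in both cases. By Obs.~\ref{ref:properties_C_pvr}, the hybrid-vertex $\eta_{C_M}$ has a unique child, so $L_\eta$ is exactly the leaf set below that child; in the pvr-construction (Def.~\ref{def:pvr}) this child is the root of the subtree of $(\MDT_G,t_G)$ rooted at the maximal strong module of $G[M]$ that was identified with the leaf of the elementary network $N_v$ attached to $\eta_{C_M}$. Hence $L_\eta\in\Mmax(G[M])$ and, in particular, $L_\eta$ is a module of $G[M]$. For $H=G_1(M)$ I would use that $\eta_{C_M}\in P^1(C_M)-\rho_{C_M}$, whence $L_\eta\subseteq V(G_1(M))$, together with the elementary fact that the intersection of a module with the vertex set of an induced subgraph is again a module of that induced subgraph; thus $L_\eta=L_\eta\cap V(G_1(M))$ is a module of $G_1(M)$. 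Note also that $G[L_\eta]$ is the same induced subgraph of $G$ in both cases, so ``maximum clique in $G[L_\eta]$'' is unambiguous.

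With this in hand, write $A\coloneqq V(K)\cap L_\eta$ and $B\coloneqq V(K)\setminus L_\eta$, where $A\neq\emptyset$ by hypothesis. Since $K$ is complete and $A\neq\emptyset$, each $b\in B$ is adjacent to some vertex of $L_\eta$; because $L_\eta$ is a module of $H$, the vertex $b$ is then adjacent to \emph{all} of $L_\eta$, so $B$ is complete to $L_\eta$ in $H$. For statement~(1), $A$ is clearly a clique of $G[L_\eta]$; if it were not of maximum size, a strictly larger clique $A'$ of $G[L_\eta]$ would yield the clique $B\cupdot A'$ in $H$ of size $|B|+|A'|>|V(K)|=\omega(H)$, a contradiction. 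Hence $|A|=\omega(G[L_\eta])$ and $A$ induces a maximum clique in $G[L_\eta]$.

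For statement~(2), given any maximum clique $K'$ of $G[L_\eta]$, the set $(V(K)\setminus L_\eta)\cupdot V(K')=B\cupdot V(K')$ is a clique of $H$, since $B$ is a clique, $V(K')$ is a clique inside $L_\eta$, and $B$ is complete to $L_\eta\supseteq V(K')$; its size equals $|B|+\omega(G[L_\eta])=|B|+|A|=|V(K)|=\omega(H)$ by statement~(1), so it induces a maximum clique in $H$. The only genuinely non-routine step is the first one, namely pinning down from the pvr-structure recorded in Obs.~\ref{ref:properties_C_pvr} that $L_\eta$ is a module of $H$; once this is secured, (1) and (2) follow purely from the module property and size counting, with the corner cases $B=\emptyset$ and $|L_\eta|=1$ being handled automatically.
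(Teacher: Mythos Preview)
Your proposal is correct and follows essentially the same approach as the paper: both arguments hinge on the fact that $L_\eta$ is a module of $H$, then run the identical swap-and-count routine to show first that $V(K)\cap L_\eta$ is a maximum clique of $G[L_\eta]$ and second that replacing it by any maximum clique $K'$ of $G[L_\eta]$ preserves maximality. The only cosmetic difference is that the paper derives the module property of $L_\eta$ by first arguing $\lca_N(x,z)=\lca_N(y,z)$ for all $x,y\in L_\eta$ and $z\in V(H)\setminus L_\eta$ from the pvr-structure, whereas you invoke directly that $L_\eta\in\Mmax(G[M])$ and then restrict to $G_1(M)$ via the standard fact that modules intersect to modules of induced subgraphs; your phrasing is arguably cleaner and makes the uniformity over $H\in\{G[M],G_1(M)\}$ more transparent.
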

\begin{proof}
	Let $G$ be a \gatex graph that is explained by the pvr-network $(N,t)$ and
	suppose that $G$ contains a prime module $M$. Put $L_{\eta}\coloneqq
	L(N(\eta_{C_M}))$ and $p=\rho_{C_M}$. In the following, let $H\in
	\{G_1(M),G[M]\}$. To recall, $p$ has precisely two children where one them
	is located on $P^1(C)-p$ and the other on $P^2(C)-p$. Moreover, $\eta$ has
	precisely one child. Thus, one easily verifies that $\lca_N(x,z)=\lca_N(y,z)
	\in (V(P^1(C))\cup V(P^2(C))) -p)$ for all $x,y\in L_{\eta}$ and for all
	$z\in V(H)\setminus L_\eta$. 

	Suppose that $H$ contains a maximum clique $K$ that contains vertices in
	$L_\eta$. Since $K$ is a clique in $H$, it must hold that $t(\lca_N(x,z)) =
	1$ for all $x \in V(K)\cap L_\eta$ and $z\in V(K)\setminus L_\eta\subseteq
	V(H)\setminus L_\eta$. By definition of pvr-networks, $L_\eta$ is a module
	of $G$ and, therefore, $t(\lca_N(x,z)) = 1$ with $x \in V(K)\cap L_\eta$ and
	$z\in V(K)\setminus L_\eta$ implies that $t(\lca_N(x',z)) = 1$ for all $x'
	\in L_\eta$. By construction, we have $V(K) = (V(K)\setminus L_\eta)\cupdot
	(V(K) \cap L_\eta)$. Assume, for contradiction, that $V(K)\cap L_\eta$ does
	not induce a maximum clique in $G[L_\eta]$. In this case, there is a clique
	$K'$ in $G[L_\eta]$ such that $|V(K')|>|V(K)\cap L_\eta|$. By the previous
	arguments, $t(\lca_N(x',z)) = 1$ for all $x' \in V(K')$ and $z\in
	V(K)\setminus L_\eta$ which implies that $(V(K)\setminus L_\eta)\cupdot
	V(K')$ induces a complete graph in $H$. However, $|(V(K)\setminus
	L_\eta)\cupdot V(K')|> |(V(K)\setminus L_\eta)\cupdot (V(K) \cap L_\eta)| =
	|V(K)|$; a contradiction to $K$ being a maximum clique in $H$. 	
	Therefore, $V(K)\cap L_\eta$ induces a maximum clique in $G[L_\eta]$.
	
	Finally, let $K'$ be some maximum clique in $G[L_\eta]$ and thus, $|V(K)\cap L_\eta|=|V(K')|$. 
	As argued before, $t(\lca_N(x',z)) = 1$ for all $x' \in V(K')$ and $z\in
	V(K)\setminus L_\eta$ which implies that $(V(K)\setminus L_\eta)\cupdot
	V(K')$ induces a complete graph $K''$ in $H$ of size 
	$|V(K'')| = |V(K)\setminus L_\eta| + |V(K')| = |V(K)\setminus L_\eta| + |V(K)\cap L_\eta| = |V(K)|$.
	Hence, $K''$ is a maximum clique in $H$. 
\end{proof}

\begin{lemma}\label{lem:Leta1}
	Let $G$ be a \gatex graph that is explained by the pvr-network $(N,t)$ and
	suppose that $G$ contains a prime module $M$ such that $t(\rho_{C_M})=1$. If
	both, $G_1(M)$ and $G_2(M)$, contain a maximum clique  
	with vertices in
	$L(N(\eta_C))$, then $G[M]$ contains a maximum clique with vertices in
	$L(N(\eta_C))$.
	
	In this case, there are maximum cliques  $K'$ and $K''$ in $G_1(M)$
	and $G_2(M)$, respectively, such that $V(K')\cup V(K'')$ induce a
	maximum clique in $G[M]$.
\end{lemma}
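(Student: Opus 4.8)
The plan is to exploit the join that the label $t(\rho_{C_M})=1$ forces between the two sides of the cycle, and then to glue the side-wise maximum cliques produced by Lemma~\ref{lem:max-clique-in-eta}. Throughout write $C\coloneqq C_M$, $L_\eta\coloneqq L(N(\eta_C))$, $A_1\coloneqq V(G_1(M))\setminus L_\eta$ and $A_2\coloneqq V(G_2(M))\setminus L_\eta$, so that $V(G[M])$ is the disjoint union of $A_1$, $A_2$ and $L_\eta$. The first step is to observe that $\lca_N(x,y)=\rho_C$ for all $x\in A_1$ and $y\in A_2$; since $t(\rho_C)=1$ this yields $\{x,y\}\in E(G)$, so $G[A_1\cup A_2]=G[A_1]\join G[A_2]$ and every vertex of $A_1$ is adjacent to every vertex of $A_2$.

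Next I would invoke Lemma~\ref{lem:max-clique-in-eta} for $G_1(M)$ and, after interchanging the roles of $P^1(C)$ and $P^2(C)$, for $G_2(M)$: as each of them has a maximum clique meeting $L_\eta$, the trace of such a clique on $L_\eta$ is a maximum clique of $G[L_\eta]$, and every maximum clique of $G[L_\eta]$ extends to a maximum clique of the respective side. Fixing one maximum clique $K_\eta$ of $G[L_\eta]$, this gives maximum cliques $K'$ of $G_1(M)$ and $K''$ of $G_2(M)$ with $V(K')\cap L_\eta=V(K'')\cap L_\eta=V(K_\eta)$. Because $V(K')\setminus L_\eta\subseteq A_1$ and $V(K'')\setminus L_\eta\subseteq A_2$ are completely joined, the set $W\coloneqq V(K')\cup V(K'')$ induces a clique of $G[M]$ that meets $L_\eta$, and $|W|=\omega(G_1(M))+\omega(G_2(M))-\omega(G[L_\eta])$.

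It then remains to show that $W$ is in fact maximum. For a competing clique $K$ of $G[M]$ I would distinguish whether $V(K)$ meets $L_\eta$. If it does, the argument is routine: as $L_\eta$ is a module of $G$, every vertex of $V(K)\setminus L_\eta$ is adjacent to \emph{all} of $L_\eta$, so $(V(K)\cap A_i)\cup V(K_\eta)$ is a clique of $G_i(M)$ and hence $|V(K)\cap A_i|\le\omega(G_i(M))-\omega(G[L_\eta])$ for $i=1,2$; adding these to $|V(K)\cap L_\eta|\le\omega(G[L_\eta])$ gives $|V(K)|\le|W|$. The delicate case---and the step I expect to be the main obstacle---is a clique $K$ that avoids $L_\eta$ altogether: then $V(K)\subseteq A_1\cup A_2$ and the join only yields $|V(K)|\le\omega(G[A_1])+\omega(G[A_2])$, so one still has to bound $\omega(G[A_i])$ by $\omega(G_i(M))-\omega(G[L_\eta])$. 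This amounts to showing that deleting $L_\eta$ from a side lowers its clique number by the full $\omega(G[L_\eta])$, and I would prove it by following the alternating $0/1$-labels along $P^i(C)$ together with the hypothesis that $G_i(M)$ carries a maximum clique through $\eta_C$, which pins down where along the side the clique number is realised.
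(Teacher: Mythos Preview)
Your overall strategy---gluing side-wise maximum cliques $K'$ and $K''$ along a common maximum clique $K_\eta$ of $G[L_\eta]$ and then arguing maximality via a case split on whether a competing clique meets $L_\eta$---is exactly the route the paper takes, and your treatment of the first case (competing clique meets $L_\eta$) matches the paper's argument.

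The genuine gap is your ``delicate case'', and the fix you propose cannot work: the inequality $\omega(G[A_i])\le\omega(G_i(M))-\omega(G[L_\eta])$ is false in general, even under the hypothesis that $G_i(M)$ has a maximum clique meeting $L_\eta$. Take $P^1(C)=\rho_C\,w_1\,w_2\,\eta_C$ with $t(\rho_C)=1$, $t(w_1)=0$, $t(w_2)=1$ (quasi-discriminating), let the non-cycle children have $\omega(u_1)=5$, $\omega(u_2)=3$, and set $\omega(\eta)=2$; make side~$2$ symmetric. Then $\omega(G_1(M))=\max(5,3{+}2)=5$ and $G_1(M)$ \emph{does} carry a maximum clique through $L_\eta$, yet $\omega(G[A_1])=\max(5,3)=5>3=\omega(G_1(M))-\omega(G[L_\eta])$. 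Here $G[M]$ has a clique of size $10$ entirely inside $A_1\cup A_2$ (a maximum clique of $G[L_{u_1}]$ joined to one of $G[L_{u_1'}]$), whereas the glued clique $W$ has size $8$ and every clique of $G[M]$ meeting $L_\eta$ has size at most $2{+}3{+}3=8$; so no maximum clique of $G[M]$ meets $L_\eta$ and the lemma as stated fails. The paper's own proof of this case just says one can ``re-use similar arguments as in the previous case'', but that earlier argument needed $\hat K^1\cup K^\eta$ to be a clique---which it was, as a subset of $\hat K$---and precisely that containment is lost when $\hat K$ avoids $L_\eta$. So neither the paper's hand-wave nor your proposed completion closes the gap.
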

\begin{proof}
	Let $G$ be a \gatex graph that is explained by the pvr-network $(N,t)$ and
	suppose that $G$ contains a prime module $M$. Put $p=\rho_{C_M}$ and assume
	that $t(p)=1$. Put $G_1\coloneqq G_1(M)$, $G_2\coloneqq G_2(M)$ and
	$L_{\eta}\coloneqq L(N(\eta_{C_M}))$. For a subgraph $H\subseteq G$ we put
	$|H|\coloneqq |V(H)|$.
		
	Suppose that $G_1$, resp., $G_2$ contains a maximum clique $K'$, resp., $K''$
	that contains vertices in $L(N(\eta_C))$. By Lemma
	\ref{lem:max-clique-in-eta}, we can assume w.l.o.g.\ that $V(K')\cap L_{\eta}
	= V(K'') \cap L_{\eta}$ induce a maximum clique $K^\eta$ in $G[L_\eta]$. We
	show that $V(K')\cup V(K'')$ induces a maximum clique in $G[M]$. Let $K^1$,
	resp., $K^2$ be the complete subgraph of $K'$, resp., $K''$ that is induced by
	$V(K')\setminus L_{\eta}$, resp., $V(K'')\setminus L_{\eta}$. Since $t(p)=1$,
	all vertices in $V(K^1)$ are adjacent to all vertices in $V(K^2)$ and thus,
	the subgraph induced by $V(K^1) \cupdot V(K^2)$ coincides with $K^1\join
	K^2$. Since $K^\eta$ is a complete graph, we have $K' = K^1\join K^\eta$ and
	$K'' = K^2\join K^\eta$, The latter two arguments imply that $K''' \coloneqq
	K^1\join K^\eta\join K^2$ is a complete graph in $G[M]$ that is induced by
	$V(K')\cup V(K'')$. Assume, for contradiction, that $K'''$ is not a maximum
	clique in $G[M]$. Let $\hat K$ be a maximum clique in $G[M]$ and thus,
	$|\hat K|>|K'''|$. Let $\hat K^i$ be the complete subgraph of $\hat K$
	induced by the vertices $(V(G_i)\cap V(\hat K)) \setminus L_{\eta}$, $i\in
	\{1,2\}$. There are two cases, either $\hat K$ contains vertices in $L_\eta$
	or not. 

	If $\hat K$ contains vertices in $L_\eta$, then we can apply Lemma
	\ref{lem:max-clique-in-eta} and assume that $V(\hat K)\cap L_\eta =
	V(K^\eta)$ induces the maximum clique $K^\eta$ in $G[L_\eta]$. By
	construction, $V(K''') = V(K^1)\cupdot V(K^\eta)\cupdot V(K^2)$ and,
	moreover, $V(\hat K) = V(\hat K^1)\cupdot V(K^\eta)\cupdot V(\hat K^2)$. If
	$|\hat K^1|\leq |K^1|$ and $|\hat K^2|\leq |K^2|$, then $|\hat K| = |\hat
	K^1| + |K^\eta| + |\hat K^2|\leq |K^1| + |K^\eta| + |K^2| = |K'''|$, which is
	impossible as, by assumption, $|\hat K|>|K'''|$. Thus, $|\hat K^1|>|K^1|$ or
	$|\hat K^2|>|K^2|$ must hold. W.l.o.g.\ we may assume that $|\hat
	K^1|>|K^1|$. But then, $|V(\hat K^1)\cupdot V(K^\eta)|> |V(K^1)\cupdot
	V(K^\eta)|$ which together with the fact that $V(\hat K^1)\cupdot
	V(K^\eta)\subseteq V(\hat K)$ induce a complete graph in $G_1$ implies that
	$V(K^1)\cupdot V(K^\eta) = V(K)$ cannot induce a maximum clique in $G_1$; a
	contradiction. 

	If $\hat K$ does not contain vertices in $L_\eta$, then $V(\hat K) = V(\hat
	K^1)\cupdot V(\hat K^2)$ and, in particular, $\hat K = \hat K^1\join \hat
	K^2$. If $|\hat K^1|\leq |K^1|$ and $|\hat K^2|\leq |K^2|$, then $|\hat K^1|
	+ |\hat K^2|\leq |K^1| + |K^2|$ which is not possible since $|\hat K| =
	|\hat K^1| + |\hat K^2|>|K'''| = |K^1| + |K^\eta| + |K^2|>|K^1| + |K^2|$
	holds. Thus, $|\hat K^1|>|K^1|$ or $|\hat K^2|>|K^2|$. We can now re-use
	similar arguments as in the previous case to obtain a contradiction. 
	
	In summary, $K'''$ is a maximum clique in $G[M]$ that contains vertices in 
	$L_\eta$ and is induced by $V(K')\cup V(K'')$.
\end{proof}

\begin{lemma}\label{lem:Leta2}
	Let $G$ be a \gatex graph that is explained by the pvr-network $(N,t)$
	and suppose that $G$ contains a prime module $M$ such that
	$t(\rho_{C_M})=1$. 
	If $G[M]$ contains a maximum clique that contains vertices
	in $L(N(\eta_{C_M}))$, then $G_1(M)$ and $G_2(M)$ have both a maximum clique
	that contains vertices in $L(N(\eta_{C_M}))$.
	%
\end{lemma}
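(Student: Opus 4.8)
The plan is to mirror the argument of Lemma~\ref{lem:Leta1}, exploiting the structural facts recorded in Observation~\ref{ref:properties_C_pvr}. Writing $L_\eta \coloneqq L(N(\eta_{C_M}))$, $G_1 \coloneqq G_1(M)$ and $G_2 \coloneqq G_2(M)$, I would first note that $V(G_1)\cup V(G_2) = M$, that $V(G_1)\cap V(G_2) = L_\eta$, and that all edges between $V(G_1)\setminus L_\eta$ and $V(G_2)\setminus L_\eta$ are present: for such a pair $x,y$ one has $\lca_N(x,y)=\rho_{C_M}$, and $t(\rho_{C_M})=1$ by hypothesis. I would also recall, as in the proof of Lemma~\ref{lem:max-clique-in-eta}, that $L_\eta$ is a module of $G$. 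Now fix a maximum clique $\hat K$ of $G[M]$ meeting $L_\eta$ and decompose its vertex set as $V(\hat K)=V(\hat K^1)\cupdot V(K^\eta)\cupdot V(\hat K^2)$, where $V(K^\eta)=V(\hat K)\cap L_\eta\neq\emptyset$ and $V(\hat K^i)=V(\hat K)\cap(V(G_i)\setminus L_\eta)$. The natural candidate for a maximum clique of $G_1$ meeting $L_\eta$ is the subgraph induced by $V(\hat K^1)\cup V(K^\eta)$: it is a clique because it is contained in $\hat K$, it lies in $G_1$, and it meets $L_\eta$ since $V(K^\eta)\neq\emptyset$.

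The crux is to prove this candidate is maximum in $G_1$. I would take an arbitrary maximum clique $K^*$ of $G_1$ and show that the vertex set $V(K^*)\cupdot V(\hat K^2)$ induces a clique in $G[M]$ by checking the cross-adjacencies. Vertices of $V(K^*)\setminus L_\eta\subseteq V(G_1)\setminus L_\eta$ are adjacent to all of $V(\hat K^2)\subseteq V(G_2)\setminus L_\eta$ by the edge fact above (i.e.\ $t(\rho_{C_M})=1$). Each vertex $w\in V(\hat K^2)$ is adjacent to every vertex of $V(K^\eta)\neq\emptyset$ inside the clique $\hat K$, so, $L_\eta$ being a module, $w$ is adjacent to the whole of $L_\eta$ and in particular to the vertices of $V(K^*)\cap L_\eta$. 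Hence $V(K^*)\cupdot V(\hat K^2)$ is a clique of $G[M]$, and maximality of $\hat K$ gives $|V(K^*)|+|V(\hat K^2)|\le |V(\hat K)| = |V(\hat K^1)|+|V(K^\eta)|+|V(\hat K^2)|$, whence $|V(K^*)|\le |V(\hat K^1)|+|V(K^\eta)|$. As the right-hand side is the size of the clique $\hat K^1\cup K^\eta$ in $G_1$, this clique is maximum in $G_1$ and meets $L_\eta$. Exchanging the roles of $G_1$ and $G_2$ yields a maximum clique of $G_2$ meeting $L_\eta$, completing the proof.

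I expect the point requiring the most care to be the cross-adjacency verification, specifically the subcase where $K^*$ contains vertices of $L_\eta$: there the conclusion rests on combining the module property of $L_\eta$ with the nonemptiness of $K^\eta$, so that at least one $L_\eta$-vertex is already known to be adjacent to each vertex of $\hat K^2$. It is precisely the hypothesis that $\hat K$ meets $L_\eta$ (ensuring $V(K^\eta)\neq\emptyset$) that makes this gluing of $K^*$ with $\hat K^2$ valid, and this is the only place where that hypothesis is used.
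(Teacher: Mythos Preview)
Your argument is correct and in fact slightly cleaner than the paper's. Both proofs rest on the same structural ingredients---the modularity of $L_\eta$ and the fact that $t(\rho_{C_M})=1$ makes every vertex of $V(G_1)\setminus L_\eta$ adjacent to every vertex of $V(G_2)\setminus L_\eta$---but they are organised differently. The paper argues by contradiction: it assumes no maximum clique of $G_1$ meets $L_\eta$, takes an arbitrary maximum clique $K'$ of $G_1$ and a maximum clique $K''$ of $G_2$, and then splits into the cases ``$K''$ meets $L_\eta$'' and ``$K''$ avoids $L_\eta$'', deriving a size contradiction in each. Your proof is direct: you simply glue an arbitrary maximum clique $K^*$ of $G_1$ to $\hat K^2$ and bound $|V(K^*)|$ by $|V(\hat K^1)|+|V(K^\eta)|$, handling the subcase $V(K^*)\cap L_\eta\neq\emptyset$ uniformly via the module property and the nonemptiness of $K^\eta$. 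This avoids the auxiliary clique $K''$ and the case split altogether; the paper's second case is essentially your argument specialised to the situation where $K^*$ avoids $L_\eta$, so your version subsumes it.
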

\begin{proof}
	Let $G$ be a \gatex graph that is explained by the pvr-network $(N,t)$ and
	suppose that $G$ contains a prime module $M$. Put $p=\rho_{C_M}$ and assume
	that $t(p)=1$. Furthermore, put $G_1\coloneqq G_1(M)$, $G_2\coloneqq
	G_2(M)$, $L_{\eta}\coloneqq L(N(\eta_{C_M}))$ and $G_\eta\coloneqq G[L_\eta]$.
	For a subgraph $H\subseteq G$ we define $|H|\coloneqq |V(H)|$. Let $K$ be a
	maximum clique in $G[M]$ that contains vertices in $L_\eta$ and put $K^1
	\coloneqq (G_1\cap K) - G_\eta$, $K^2 \coloneqq (G_2\cap K) - G_\eta$ and
	$K^\eta \coloneqq K \cap G_\eta$. Thus, $V(K) = V(K^1)\cupdot
	V(K^\eta)\cupdot V(K^2)$. 
	
	Assume, for contradiction, that every maximum clique in $G_1$ does not
	contain vertices in $L_\eta$. Let $K'$ be a maximum clique in $G_1$. Since
	$V(K^1)\cupdot V(K^\eta)\subseteq V(G_1)$ and $V(K^1)\cupdot V(K^\eta)$
	induce a complete graph with vertices in $L_\eta$, we can conclude that
	$|V(K^1)\cupdot V(K^\eta)| = |K^1| +|K^\eta|<|K'|$. Let $K''$ be a maximum
	clique in $G_2$. Since $V(K^\eta)\cupdot V(K^2) \subseteq V(G_2)$ induces a
	complete subgraph in $G_2$, we have $|K^2| +|K^\eta|\leq |K''|$. Assume
	first that $K''$ that does not contain vertices in $L_\eta$. In this case,
	$K''' \coloneqq K'\join K''$ forms a complete graph in $G[M]$ since
	$\lca_N(x,y)=p$ has label $1$ for all $x\in V(K')$ and $y\in V(K'')$. By
	construction, $|K'|+|K''| = |K'''|$. Moreover, since $K$ is a maximum clique
	in $G[M]$ $|K'''|\leq |K|$ must hold. This together with the fact that
	$|K_\eta|\neq 0$ implies \[ |K'|+|K''| = |K'''| \leq |K| = |K^1| + |K^\eta|
	+ |K^2| < |K^1 + 2|K^\eta| + |K^2| < |K'|+|K''|,\] which yields a
	contradiction. Thus, $K''$ must contain vertices in $L_\eta$. Note that
	$\lca_N(x,y)=p$ has label $1$ for all $x\in V(K')$ and $y\in V(K^2)$ and
	thus, $K'''' \coloneqq K'\join K^2$ forms a complete graph in $G[M]$ which
	together with $|K''''| = |K^1| +|K^\eta|<|K'|$ and $|K''''|\leq |K|$ yields
	the following contradiction: \[ |K'|+|K^2| = |K''''|\leq |K| =
	|K^1|+|K^\eta|+|K^2|< |K'|+|K^2|.\] Hence, $G_1$ must contain a maximum
	clique with vertices in $L_\eta$. By similar arguments, $G_2$ must contain a
	maximum clique with vertices in $L_\eta$. 
\end{proof}

\begin{lemma}\label{lem:not-Leta}
	Let $G$ be a \gatex graph that is explained by the pvr-network $(N,t)$ and
	suppose that $G$ contains a prime module $M$ such that $t(\rho_{C_M})=1$. 
	Put $L_\eta \coloneqq  L(N(\eta_C))$ and $G_\eta = G[L_\eta]$. 
	If (at least) one of $G_1(M)$ and $G_2(M)$  does not contain a maximum clique  
	with vertices in	$L_\eta$, then
	$K$ is a maximum clique in $G[M]$ if and only if
	$K\cap G_1(M)$ and  $K\cap G_2(M)$ is a maximum clique
	in $G_1(M)-G_\eta$ and $G_2(M)-G_\eta$, respectively. 
%
%
%
%
\end{lemma}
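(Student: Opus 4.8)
The plan is to first collect the structural facts about the cycle $C_M$ recorded in Observation~\ref{ref:properties_C_pvr}. Writing $G_1 \coloneqq G_1(M)$, $G_2 \coloneqq G_2(M)$ and $p \coloneqq \rho_{C_M}$, we have $V(G[M]) = V(G_1)\cup V(G_2)$ with $V(G_1)\cap V(G_2) = L_\eta$, and because both children of $p$ lie on the cycle while the two sides meet only at $\eta_{C_M}$, any $x\in V(G_1)\setminus L_\eta$ and $y\in V(G_2)\setminus L_\eta$ satisfy $\lca_N(x,y)=p$. Since $t(p)=1$ by hypothesis, this means \emph{every} vertex of $V(G_1)\setminus L_\eta$ is adjacent to \emph{every} vertex of $V(G_2)\setminus L_\eta$; this ``cross-join'' property is the one fact I will use repeatedly. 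The crucial opening move is to apply the contrapositive of Lemma~\ref{lem:Leta2}: since by assumption at least one of $G_1,G_2$ has no maximum clique meeting $L_\eta$, it follows that $G[M]$ has no maximum clique meeting $L_\eta$. Hence every maximum clique $K$ of $G[M]$ satisfies $V(K)\cap L_\eta = \emptyset$, so $V(K)\subseteq (V(G_1)\setminus L_\eta)\,\cupdot\,(V(G_2)\setminus L_\eta)$.

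\textbf{Forward direction.} Given a maximum clique $K$ of $G[M]$, the reduction above guarantees $K$ avoids $L_\eta$, so with $K^1 \coloneqq K\cap G_1$ and $K^2 \coloneqq K\cap G_2$ we obtain disjoint cliques of $G_1-G_\eta$ and $G_2-G_\eta$ with $V(K)=V(K^1)\,\cupdot\,V(K^2)$. I would argue by contradiction that $K^1$ is maximum in $G_1-G_\eta$: if some clique $\tilde K^1$ of $G_1-G_\eta$ had $|V(\tilde K^1)|>|V(K^1)|$, then the cross-join property would make $\tilde K^1 \join K^2$ a complete subgraph of $G[M]$ of size strictly larger than $|V(K)|$, contradicting maximality of $K$. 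By symmetry $K^2$ is maximum in $G_2-G_\eta$.

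\textbf{Backward direction.} Conversely, suppose $K^1 \coloneqq K\cap G_1$ and $K^2 \coloneqq K\cap G_2$ are maximum cliques of $G_1-G_\eta$ and $G_2-G_\eta$. Since they live in the disjoint sets $V(G_1)\setminus L_\eta$ and $V(G_2)\setminus L_\eta$, their union $K=K^1\,\cupdot\,K^2$ automatically avoids $L_\eta$, and the cross-join property shows $K$ is a clique of $G[M]$. To see it is \emph{maximum}, I would not try to build a competitor but instead compare with an arbitrary maximum clique $\hat K$ of $G[M]$: by the first step $\hat K$ also avoids $L_\eta$, so it splits as $\hat K^1\,\cupdot\,\hat K^2$ with $\hat K^i$ a clique of $G_i-G_\eta$; maximality of $K^1,K^2$ then gives $|V(\hat K)| = |V(\hat K^1)| + |V(\hat K^2)| \le |V(K^1)| + |V(K^2)| = |V(K)|$, so $K$ is maximum.

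\textbf{Main obstacle.} The genuine content is entirely in the opening reduction — correctly invoking the contrapositive of Lemma~\ref{lem:Leta2} to conclude that no maximum clique of $G[M]$ can meet $L_\eta$. Once that is secured, both implications are bookkeeping on top of the join structure forced by $t(p)=1$, and the only points that require care are that $G_i-G_\eta$ is a genuine induced subgraph of $G[M]$ (so cliques transfer without creating spurious edges) and that in the backward direction one must use an arbitrary maximum clique as the comparison object rather than attempt a direct enlargement of $K$.
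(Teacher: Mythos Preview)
Your proof is correct and follows essentially the same approach as the paper: both begin by invoking the contrapositive of Lemma~\ref{lem:Leta2} to conclude that no maximum clique of $G[M]$ meets $L_\eta$, then exploit the cross-join forced by $t(\rho_{C_M})=1$ to handle each direction by the same swap/comparison arguments you describe. The only cosmetic difference is that the paper phrases the backward direction as a proof by contradiction rather than a direct comparison with an arbitrary maximum clique, but the underlying argument is identical.
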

\begin{proof}
	Let $G$ be a \gatex graph that is explained by the pvr-network $(N,t)$ and
	suppose that $G$ contains a prime module $M$. Put $p=\rho_{C_M}$ and assume
	that $t(p)=1$. Furthermore, put $G_1\coloneqq G_1(M)$, $G_2\coloneqq
	G_2(M)$, $L_{\eta}\coloneqq L(N(\eta_{C_M}))$ and $G_\eta\coloneqq G[L_\eta]$.
	Assume that (at least) one of $G_1$ or $G_2$ does not contain a maximum clique  
	with vertices in $L_\eta$.  By contraposition of Lemma \ref{lem:Leta2}, 
	$G[M]$ cannot contain  a maximum clique with vertices in $L_\eta$. 
	
	Let $K$ be a maximum clique of $G[M]$. Since $V(K)\cap L_\eta = \emptyset$, 
	$K$ consists entirely of vertices in $G_1-G_\eta$ and $G_2-G_\eta$. 
	Hence $V(K) = (V(K)\cap V(G_1))\cupdot (V(K)\cap V(G_2))$. 
	Let $K'$ and $K''$ be the two complete subgraphs in $G_1$ and $G_2$
	that are induced by $V(K)\cap V(G_1)$  and $V(K)\cap V(G_2)$, 
	resectively. Since $K$ contains no vertices in $L_\eta$, 
	$K'$ and $K''$ are subgraphs of $G_1-G_\eta$ and $G_2-G_\eta$. 
	and, in particular, $K = K'\join K''$. 
	Assume, for contradiction, that $K'$ is not a maximum clique in $G_1(M)-G_\eta$. 
	Hence, there is a larger clique $K'''$ in $G_1(M)-G_\eta$. Since
	$K'''$, resp., $K''$ is a subgraph of $G_1-G_\eta$, resp., $G_2-G_\eta$
	we have $\lca(x,y)=p$ for all $x\in V(K''')$ and $y\in V(K'')$. 
	This and  $t(p)=1$ implies that $K'''\join K''$ is a complete
	subgraph in $G[M]$ with more vertices than $K$; a contradiction. 
	Hence, $K'$ must be a maximum clique in $G_1(M)-G_\eta$. 
	Similarity,  $K''$ must be a maximum clique in $G_1(M)-G_\eta$. 
	
	Assume now that $K'$ is a maximum clique in $G_1(M)-G_\eta$ and $K''$ is be
	a maximum clique in $G_1(M)-G_\eta$. By similar arguments as in the previous
	case, $K = K'\join K''$ is a complete subgraph in $G[M]$. Assume, for
	contradiction, that $K$ is not a maximum clique in $G[M]$. In this case,
	there is a larger complete subgraph $\hat K$ in $G[M]$. 
	Let $\hat K'$ and $\hat K''$ be the two complete subgraphs in $G_1$ and $G_2$
	that are induced by $V(\hat K)\cap V(G_1)$  and $V(\hat K)\cap V(G_2)$, 
	As argued at the
	beginning of this proof, $V(\hat K)\cap L_\eta = \emptyset$ must hold
	and therefore, $\hat K = \hat K' \join \hat K''$ and, moreover, 
	$\hat K'$, resp., $\hat K''$	is a subgraph of 
	$G_1-G_\eta$, resp., $G_2-G_\eta$. 
	However, since $|V(\hat K')|+|V(\hat K'')| =|V(\hat K)| >|V(K)| = |V(K')|+|V(K'')|$
	it must hold  $|V(\hat K')|>|V(K')|$ or $|V(\hat K'')|>|V( K'')|$;
	 a contradiction to $K'$ being a maximum clique in $G_1(M)-G_\eta$
	 and $K''$ being a maximum clique in $G_2(M)-G_\eta$.
\end{proof}


\begin{proposition}\label{prop:clique-Number-M-series}
	Let $G$ be a \gatex graph that is explained by the pvr-network $(N,t)$
	and suppose that $G$ contains a prime module $M$. Assume that $t(\rho_{C_M})=1$. 
	Put $L_\eta = L(N(\eta_{C_M}))$, $G_1=G_1(M)$, $G_2=G_2(M)$  and $G_\eta = G[L_\eta]$
	If both, $G_1$ and $G_2$, contain a maximum clique  
	with vertices in $L\eta$, then 
	\[\omega(G[M]) = \omega(G_1)+\omega(G_2)-\omega(G_\eta) \]
	Otherwise, if at least one of $G_1$ or $G_2$ does not contain
	a maximum clique with vertices in $L_\eta$, then
	\[\omega(G[M]) = \omega(G_1-G_\eta)+\omega(G_2-G_\eta) \]		
\end{proposition}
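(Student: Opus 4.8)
The plan is to treat the two cases of the statement separately, leaning on the structural lemmas just established. The common geometric fact underpinning everything is that the two sides of the cycle $C_M$ meet only in the hybrid vertex $\eta_{C_M}$, so the leaf-descendants they share are exactly $L_\eta$; consequently $V(G_1)\cap V(G_2)=L_\eta$, and every clique of $G[M]$ decomposes cleanly along this intersection. With this in hand, the proof reduces to counting vertices of suitably chosen optimal cliques.

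For the first case, where both $G_1$ and $G_2$ contain a maximum clique meeting $L_\eta$, I would invoke Lemma \ref{lem:Leta1} to obtain maximum cliques $K'$ of $G_1$ and $K''$ of $G_2$ whose union $V(K')\cup V(K'')$ induces a maximum clique of $G[M]$, and for which (as established inside the proof of that lemma) the common part $V(K')\cap L_\eta = V(K'')\cap L_\eta$ induces a single maximum clique $K^\eta$ of $G_\eta$. Since $V(K')\cap V(K'')\subseteq V(G_1)\cap V(G_2)=L_\eta$, this common part is exactly $V(K^\eta)$, and $|V(K^\eta)|=\omega(G_\eta)$ by definition together with Lemma \ref{lem:max-clique-in-eta}. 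The inclusion-exclusion principle then yields
\[\omega(G[M]) = |V(K')\cup V(K'')| = |V(K')| + |V(K'')| - |V(K^\eta)| = \omega(G_1)+\omega(G_2)-\omega(G_\eta),\]
which is the first claimed identity.

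For the second case, where at least one of $G_1,G_2$ has no maximum clique meeting $L_\eta$, I would apply Lemma \ref{lem:not-Leta} directly: it states that a maximum clique $K$ of $G[M]$ splits into $K\cap G_1$ and $K\cap G_2$, which are maximum cliques of $G_1-G_\eta$ and $G_2-G_\eta$ respectively, and its proof shows $V(K)\cap L_\eta=\emptyset$. Hence $V(K)$ is the disjoint union of these two parts, and
\[\omega(G[M]) = |V(K\cap G_1)| + |V(K\cap G_2)| = \omega(G_1-G_\eta)+\omega(G_2-G_\eta),\]
giving the second identity.

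The only delicate point, and hence the main thing to get right, is the bookkeeping in the first case: one must be sure that the shared $L_\eta$-parts of the two chosen cliques can be taken to coincide and to form a single maximum clique $K^\eta$ of $G_\eta$, so that the overlap removed by inclusion-exclusion is exactly $\omega(G_\eta)$ rather than a smaller quantity. This is precisely what the second conclusion of Lemma \ref{lem:max-clique-in-eta} provides, since it permits replacing the $L_\eta$-part of any maximum clique by an arbitrary maximum clique of $G_\eta$ without losing optimality; choosing the same $K^\eta$ on both sides aligns $K'$ and $K''$ along their common vertex set. The second case requires no such care, as Lemma \ref{lem:not-Leta} already guarantees the cliques avoid $L_\eta$ and split disjointly.
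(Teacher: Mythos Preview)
Your proposal is correct and follows essentially the same approach as the paper. In both cases you invoke exactly the lemmas the paper uses (Lemma \ref{lem:Leta1} together with Lemma \ref{lem:max-clique-in-eta} for the first case, Lemma \ref{lem:not-Leta} for the second); the only cosmetic differences are that you phrase the first computation via inclusion--exclusion rather than an explicit disjoint decomposition, and in the second case you start from a maximum clique of $G[M]$ and split it (using the ``only if'' direction of Lemma \ref{lem:not-Leta}) whereas the paper builds the clique from the pieces and then appeals to the ``if'' direction.
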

\begin{proof}
	Let $G$ be a \gatex graph that is explained by the pvr-network $(N,t)$ and
	suppose that $G$ contains a prime module $M$. Put $p=\rho_{C_M}$ and assume
	that $t(p)=1$. Furthermore, put $G_1\coloneqq G_1(M)$, $G_2\coloneqq
	G_2(M)$, $L_{\eta}\coloneqq L(N(\eta_{C_M}))$ and $G_\eta\coloneqq G[L_\eta]$.

	Assume first that both, $G_1$ and $G_2$, contain a maximum clique  
	with vertices in $L_\eta$. By Lemma \ref{lem:Leta1}, 
	there are maximum cliques  $K'$ and $K''$ in $G_1$
	and $G_2$, respectively, such that $V(K')\cup V(K'')$ induce a
	maximum clique $K$ in $G[M]$.
	By Lemma \ref{lem:max-clique-in-eta}, we can assume w.l.o.g.\ that
 	$V(K')\cap L_\eta=V(K'')\cap L_\eta$ induce the same maximum clique $K'''$
 	in $G_\eta$. Hence, $\omega(G_\eta)=|V(K''')|$ and 
 	thus, 
 	\begin{itemize}
 	\item[]$\qquad\qquad$   	$\omega(G_1) = |V(K')|  =\ |V(K')\setminus L_\eta| \ + |V(K')\cap L_\eta|  = \ |V(K')\setminus L_\eta| + \omega(G_\eta)  $ and
 	\item[]$\qquad\qquad$ 		$\omega(G_2) = |V(K'')| = |V(K'')\setminus L_\eta| + |V(K'')\cap L_\eta|= |V(K'')\setminus L_\eta| + \omega(G_\eta) $. \smallskip
 	\end{itemize}
 	Taken the latter arguments together, we obtain
 	\[\omega(G[M]) = |V(K)|= |V(K')\cup V(K'')|=   |(V(K')\setminus L_\eta)\cupdot V(K''')\cupdot (V(K'')\setminus L_\eta)|
 	 = \omega(G_1(M))+\omega(G_2(M))-\omega(G_\eta).\]

	Assume now that (at least) one of $G_1$ or $G_2$ does not contain a maximum clique  
	with vertices in $L_\eta$. Let $K'$ and $K''$ be maximum cliques in 
	$G_1-G_\eta$ and $G_2-G_\eta$, respectively. 
	Since $K'$ and $K''$ have no vertices in common and since
	$\lca(x,y)=p$ for all $x\in V(K')$ and $y\in V(K'')$ and
	$t(p)=1$, it follows that $K = K'\join K''$ is a complete subgraph
	in $G[M]$. By Lemma \ref{lem:not-Leta}, $K$ is a maximum clique 
	in $G[M]$. Hence, $\omega(G[M]) = |V(K)| = |V(K')| + |V(K'')|
	=\omega(G_1-G_\eta) + \omega(G_2-G_\eta)$.
 \end{proof}

\begin{figure}[t]
	\begin{center}
			\includegraphics[width=.9\textwidth]{./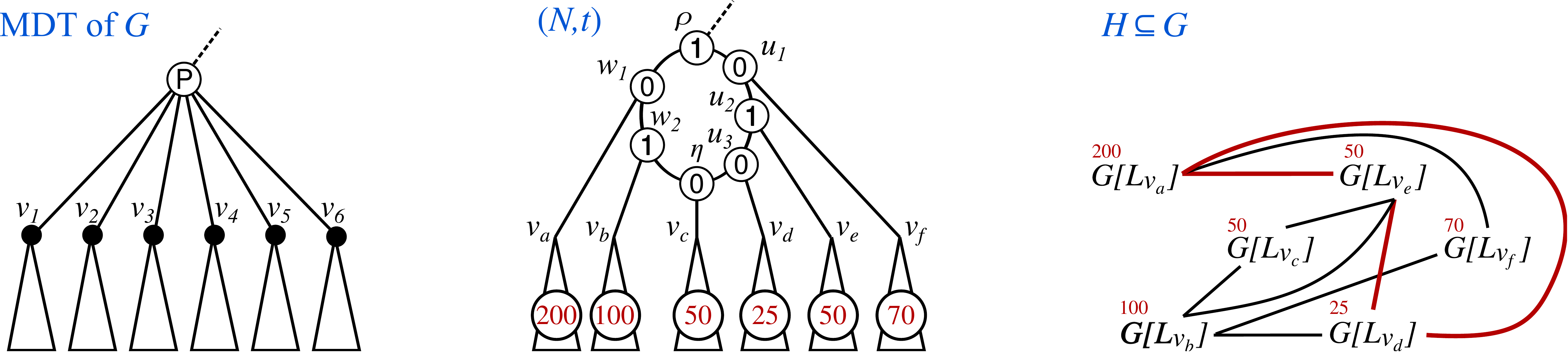}
	\end{center}
	\caption{Shown is a part of the modular decomposition tree $(\MDT_G,t_G)$ (right) and a resulting
			 pvr-network $(N,t)$ (middle) which explains the subgraph $H = G[L(N(\rho))]$ of a \gatex graph $G$. 
			 A schematic drawing of $H$ is shown right. Red numbers indicate
			 the size of a maximum clique in the respective subgraphs. 
			For the indices $a,b,c,d,e,f$ of vertices in $N$ it holds that 
			$\{a,b,c,d,e,f\}  = \{1,2,3,4,5,6\}$. A maximum clique in $H$
			of size $\omega(H)=275$
			(computed with Algorithm~\ref{alg:clique})
			is highlighted in red.
			Note that a maximum clique in $G_1(M) = G[L(N(w_1))]$ has size $200$
			and that a maximum clique in $G_2(M) = G[L(N(u_1))]$ has size $100$
			which sums up to $300$. However, none of the maximum clique in $G_1(M)$
			can contain vertices in $L_\eta$. Therefore, 
			the size of a maximum clique in $G[M]$ with $M = L_{\rho}$
			is composed of the two maximum cliques in $G_1(M) - G_\eta$ and
			$G_2(M) - G_\eta$ where $G_\eta=G[L(\eta)]$. Thus, 
			$\omega(H) = \omegaExclHyb(w_1) + \omegaExclHyb(u_1)=200 + 75=275$, 
			see Example \ref{exmpl:alg} for more details.
			 }
\label{fig:example}
\end{figure}

\begin{proposition}\label{prop:clique-Number-M-parallel}
	Let $G$ be a \gatex graph that is explained by the pvr-network $(N,t)$
	and suppose that $G$ contains a prime module $M$. If $t(\rho_{C_M})=0$, then
	$\omega(G[M]) = \max\{\omega(G_1(M)),\omega(G_2(M))\}$.
\end{proposition}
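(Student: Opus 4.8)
The plan is to exploit that the label $0$ at the root $p \coloneqq \rho_{C_M}$ completely decouples the two ``sides'' of the cycle $C_M$, so that every clique of $G[M]$ lives entirely inside $G_1(M)$ or entirely inside $G_2(M)$; the result is then just the elementary fact that the clique number of a graph with no edges between two parts equals the maximum of the clique numbers of the parts. This is the ``parallel'' counterpart of Prop.\ \ref{prop:clique-Number-M-series} and I expect it to be strictly easier, precisely because the label $0$ removes the cross-edges that made the series case delicate.

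First I would pin down the vertex sets using Obs.\ \ref{ref:properties_C_pvr}. Let $u',u''$ be the two children of $p$, which both lie on $C_M$ (one on $P^1(C_M)$, the other on $P^2(C_M)$). Then $V(G_1(M)) = L(N(u'))$ and $V(G_2(M)) = L(N(u''))$, these are induced subgraphs of $G[M]$, and $V(G_1(M)) \cap V(G_2(M)) = L_\eta \coloneqq L(N(\eta_{C_M}))$. Set $A \coloneqq V(G_1(M)) \setminus L_\eta$ and $B \coloneqq V(G_2(M)) \setminus L_\eta$. The key step is the claim that for every $x \in A$ and $y \in B$ one has $\lca_N(x,y) = p$: indeed $x$ is a leaf-descendant of $u'$ but not of $\eta_{C_M}$, and $y$ is a leaf-descendant of $u''$ but not of $\eta_{C_M}$, so since the two sides of $C_M$ meet again only at $\eta_{C_M}$ (which neither $x$ nor $y$ sits below), their unique lowest common ancestor is $p$. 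Because $(N,t)$ explains $G$ and $t(p)=0$, this gives $\{x,y\}\notin E(G)$, i.e.\ $G[M]$ has no edge between $A$ and $B$.

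With this in hand the conclusion is immediate. Any clique $K$ of $G[M]$ cannot contain a vertex of $A$ together with a vertex of $B$, hence $V(K)\subseteq V(G_1(M))$ or $V(K)\subseteq V(G_2(M))$, so $K$ is a complete subgraph of $G_1(M)$ or of $G_2(M)$; this yields $\omega(G[M]) \le \max\{\omega(G_1(M)),\omega(G_2(M))\}$. Conversely, since $G_1(M)$ and $G_2(M)$ are induced subgraphs of $G[M]$, any maximum clique of either is a clique of $G[M]$, giving the reverse inequality and thus equality. The only place needing care is the $\lca$ computation, which rests entirely on the precise description of the children of $p$ and $\eta_{C_M}$ recorded in Obs.\ \ref{ref:properties_C_pvr}; beyond that the argument is routine.
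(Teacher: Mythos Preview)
Your proposal is correct and follows essentially the same approach as the paper: both argue that for $x\in V(G_1(M))\setminus L_\eta$ and $y\in V(G_2(M))\setminus L_\eta$ one has $\lca_N(x,y)=\rho_{C_M}$ with label $0$, hence no edge, so every clique lies entirely in $G_1(M)$ or $G_2(M)$, and the reverse inequality is immediate since both are induced subgraphs. Your version is in fact a bit more explicit than the paper's in justifying the $\lca$ claim via Obs.~\ref{ref:properties_C_pvr}.
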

\begin{proof}
	Let $G$ be a \gatex graph that is explained by the pvr-network $(N,t)$
	and suppose that $G$ contains a prime module $M$. Put $p=\rho_{C_M}$ and assume
	that $t(p)=0$. Furthermore, put $G_1\coloneqq G_1(M)$, $G_2\coloneqq
	G_2(M)$, $L_{\eta}\coloneqq L(N(\eta_{C_M}))$ and $G_\eta\coloneqq G[L_\eta]$.
	
	Let $K$ be a maximum clique in $G[M]$. Note first $K$ cannot contain 
	vertices $x$ and $y$ such that $x\in V(G_1-G_\eta)$ and $y\in V(G_2-G_\eta)$ 
	since, in this case,  $\lca(x,y)=p$ and $t(p)=0$ imply that $\{x,y\}\notin E(G[M])$. 
	Hence, $K$ must be entirely contained in either $G_1$ or $G_2$. 
	Moreover, any maximum clique in $G_1$ and $G_2$ provide a complete
	subgraph of $G[M]$. Taken the latter two arguments together, 
	$\omega(G[M]) = \max\{\omega(G_1(M)),\omega(G_2(M))\}$.
\end{proof}

\begin{example}\label{exmpl:alg}
We exemplify here the main step of Algorithm~\ref{alg:clique} (starting at Line \ref{l:v-prime})
using the example given in  Fig.~\ref{fig:example}. In what follows, we put for a vertex $x$, 
$L_x\coloneqq L(N(x))$ assume that $P^1(C)$ and $P^2(C)$ denote the sides of $C$ that 
contains $w_1$ and $u_1$, respectively. Let $\rho$ and $\eta$ denote the unique root
and hybrid-vertex of $C$, respectively. In the algorithm, $\omega(x)$ always denotes
the size of a maximum clique in subgraph $G[L_x]$. Consider the prime vertex $v$, i.e., 
$M=L(\MDT_G(v))=L_\rho$ is a prime module. 

We start to initialize the parents of $w_2$ and $u_3$ of $\eta$ (Line \ref{l:base-case-start} - \ref{l:base-case-end})
and put $\omegaExclHyb(w_2) \coloneqq \omega(v_b) = 100$ and  	$\omegaExclHyb(u_3) \coloneqq \omega(v_d)=25$.
Since $t(w_2)=1$, we have $G[L_{w_2}] = G[L_{v_b}]\join G[L_{v_c}]$ and put 
$\omega(w_2)\coloneqq \omega(v_b) + \omega(v_c)=150$. 
Here, $\mathit{clique\_incl\_}L_\eta(1)$ remains unchanged and, therefore, \texttt{true}.
Since $t(u_3)=0$, we have $G[L_{u_3}] = G[L_{\eta}]\cupdot G[L_{v_d}]$ and put 
$\omega(u_3)\coloneqq \max\{\omega(\eta),  \omega(v_d)\} = 50$.
It holds that $\omega(v_d) \leq \omega(\eta)$ and thus, 
$\mathit{clique\_incl\_}L_\eta(2)$ remains unchanged and, therefore, \texttt{true}.

In Line \ref{l:init-innerC-start} - \ref{l:init-innerC-end}, we consider 
the vertices in  $P^1(C)$ and $P^2(C)$ that are distinct for the two parents
of $\eta$ and distinct from $\eta$ and $\rho$ in bottom-up fashion. 
Thus, we first consider $P^1(C)$ and 
compute $\omega(w_1)$ and $\omegaExclHyb(w_1)$. 
Since $t(w_1)=0$, we put 
 $\omega(w_1) \coloneqq \max\{\omega(w_2),\omega(v_a)\}=200$ and 
 $\omegaExclHyb(w_1) \coloneqq \max\{\omegaExclHyb(w_2),\omega(v_a)\} = 200$. 
 Since $\omega(v_a)=200 >\omega(w_2)=100$ and 
 $G[L_{w_1}] = G[L_{v_a}]\cupdot G[L_{w_2}]$ and $L_{\eta}\cap L_{v_a}=\emptyset$, none of the maximum cliques
 in  $G[L_{w_1}]$ can contain vertices in $L_\eta$ and we put 
 $\mathit{clique\_incl\_}L_\eta(1)\coloneqq \texttt{false}$. 

We now consider  $P^2(C)$ and vertex $u_2$. 
Since $t(u_2)=1$, we put $\omega(u_2) \coloneqq \omega(u_3) + \omega(v_e)= 100$
and $\omegaExclHyb(u_2) \coloneqq \omegaExclHyb(u_3) + \omega(v_e) = 75$.
We then consider $u_1$. Since $t(u_1)=0$,
 $\omega(u_1) \coloneqq \max\{\omega(u_2),\omega(v_f)\} = 100$ and 
 $\omegaExclHyb(u_1) \coloneqq \max\{\omegaExclHyb(u_2),\omega(v_f)\}=75$. 
It holds that $\omega(v_f) \leq \omega(u_2)$ and thus, 
$\mathit{clique\_incl\_}L_\eta(2)$ remains unchanged and, therefore, \texttt{true}.

We now compute $\omega(v)$ for the subgraph $G[L(\MDT_G(v))] = G[L_\rho]$. 
Since, $\mathit{clique\_incl\_}L_\eta(1) =\texttt{false}$, none of the 
maximum cliques in $G[L_{w_1}]=G_1(M)$ can contain vertices in $L_\eta$. 
However, $\mathit{clique\_incl\_}L_\eta(2) =\texttt{true}$ and thus
there are maximum cliques in $G[L_{u_1}]=G_2(M)$ that contain vertices in $L_\eta$. 
This is, in particular, the reason why we also tracked the value 
$\omegaExclHyb(u_1)$. In the algorithm, we proceed with Line \ref{l:else-rho}
and compute in Line \ref{l:series2}, 
$\omega(v) \coloneqq \omegaExclHyb(w_1) + \omegaExclHyb(u_1) = 200 + 75=275$
to conclude that $H=G[L(\MDT_G(v))]=G[L_\rho]$ contains a maximum clique
of size $275$. 
\end{example}

\begin{algorithm}[H]
  \small 
  \caption{\texttt{Computation of $\omega(G)$} }
  \label{alg:clique}
  \begin{algorithmic}[1]
    \Require  A \gatex graph $G=(V,E)$
    \Ensure   Size $\omega(G)$ of maximum clique in $G$ 

    \State Compute $(\MDT_G,t_G)$ and pvr-network $(N,t)$ of $G$ \label{l:MDT-pvr}

    \State Initialize $\omega(v)\coloneqq 1$ for all leaves $v$ in $\MDT_G$ \label{l:omega-leaves}
    \ForAll{$v\in V(\MDT_G)\setminus{L(\MDT_G)}$ in postorder} \label{l:forV}
    
    	\If{$t_G(v)=0$}
    	 	$\omega(v) \coloneqq \max_{w\in \child_{\MDT_G}(v)} \{\omega(w)\}$ \label{l:v-parallel}
    	\ElsIf{$t_G(v)=1$}	
    		$\omega(v) \coloneqq \sum_{w\in \child_{\MDT_G}(v)} \{\omega(w)\}$ \label{l:v-series}
    	\Else  \Comment{$t_G(v)=\mathrm{prime}$ } \label{l:v-prime}

    	 	\State Let $C\coloneqq C_M$ be the unique cycle in $N$ for which $M \coloneqq L(N(\rho_{C})) = L(\MDT_G(v)) $
    	 	\State Let $\eta$ be the unique hybrid in $C$ and put $\omega(\eta)\coloneqq \omega(u)$, 
    	 			where $u$ is the unique child of $\eta$ in $N$ \label{l:eta}
		
			\State $\mathit{clique\_incl\_}L_\eta(1) \coloneqq$\texttt{true}
			\State $\mathit{clique\_incl\_}L_\eta(2) \coloneqq$\texttt{true} \medskip  \medskip

			\LComment{Init $\omega(w)$ and $\omegaExclHyb(w)$ for the parents $w\in \parent_N(w)$ of $\eta$} 
			\ForAll{$w\in \parent_N(\eta)$} \label{l:base-case-start}
				\State $u'' \coloneqq \child_N(w)\setminus \{\eta\}$ 
				\State $\omegaExclHyb(w) \coloneqq \omega(u'')$ 	
				\If{ $t(w) = 0$ } 
						 $\omega(w) \coloneqq \max\{\omega(\eta),\omega(u'')\}$
						\If{$\omegaExclHyb(w)=\omega(u'')>\omega(\eta)$} 
							 $\mathit{clique\_incl\_}L_\eta(i) \coloneqq$\texttt{false} \label{l:Bool1}
						\EndIf

				\Else \
					 	$\omega(w) \coloneqq \omega(\eta)+\omega(u'')$ 
				\EndIf
												
			\EndFor	\label{l:base-case-end}	\medskip

			\LComment{Init  $\omega(w)$ and $\omegaExclHyb(w)$ for the vertices $w\neq \rho_c,\eta$ and $w\notin \parent_N(\eta)$ along the sides of $C$ bottom-up}
			\State Let $P^1$ and $P^2$ 	   be the two sides of $C$ \label{l:sides}
			\ForAll{$w\in V(P^i)\setminus(\{\eta,\rho_C\}\cup \parent_N(\eta))$ in postorder, $i\in \{1,2\}$}  \label{l:init-innerC-start}
			
				\State $u'\coloneqq \child_N(w)\cap V(C)$ and $u''\coloneqq \child_N(w)\setminus V(C)$  \Comment{Note, $\child_N(w)=\{u',u''\}$}
				
				\If{ $t(w) = 0$ } 
					
					\State $\omega(w) \coloneqq \max\{\omega(u'),\omega(u'')\}$
					\State $\omegaExclHyb(w) \coloneqq \max\{\omegaExclHyb(u'),\omega(u'')\}$ 
					\If{$\omega(u'')>\omega(u')$} 
						 $\mathit{clique\_incl\_}L_\eta(i) \coloneqq$\texttt{false} \label{l:Bool2}
					\EndIf
				
				\Else	
				
					\State $\omega(w) \coloneqq \omega(u') + \omega(u'')$
					\State $\omegaExclHyb(w) \coloneqq \omegaExclHyb(u') + \omega(u'')$
					
				\EndIf

			\EndFor \label{l:init-innerC-end} 

			\LComment{Init $\omega(\rho_C)$. Note, $\rho_C$ corresponds to $v$ in $\MDT_G$}
			\State Let $u'$ and $u''$ be the two children of $\rho_C$	 \label{l:rho1}
			\If{$t(\rho_C) = 0$} 
					 $\omega(v) \coloneqq \max\{\omega(u'), \omega(u'')\}$ \label{l:parallel}
			\Else 
				\If{$\mathit{clique\_incl\_}L_\eta(1) =\mathit{clique\_incl\_}L_\eta(2) =$\texttt{true}} \Comment{$G_1(M)$ and $G_2(M)$ contain max-cliques with vertices in $L_\eta$ } \smallskip
					
				\State		\ $\omega(v) \coloneqq \omega(u') + \omega(u'')-\omega(\eta)$ \label{l:series1}
						
				\Else \label{l:else-rho}		\Comment{$G_1(M)$ or $G_2(M)$ does not 
									 contain any max-clique with vertices in $L_\eta$ }
				\State		 $\omega(v) \coloneqq \omegaExclHyb(u') + \omegaExclHyb(u'')$ \label{l:series2}
				\EndIf	\label{l:rho2}
			\EndIf
%
%
			%
				   
    	\EndIf \label{l:end-v-prime}
    \EndFor \label{l:forV-end}
  \end{algorithmic}
\end{algorithm}

\begin{theorem}
Algorithm \ref{alg:clique} correctly computes the clique number $\omega(G)$
of \gatex graphs $G=(V,E)$ and can be implemented to run in $O(|V|+|E|)$ time. 
\end{theorem}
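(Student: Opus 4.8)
The plan is to prove correctness by induction on $\MDT_G$ traversed in postorder (the loop of Line \ref{l:forV}), and to bound the running time separately. The induction hypothesis is that for every vertex $w$ already processed the stored value $\omega(w)$ equals $\omega(G[L(N(w))])$, the size of a maximum clique in the subgraph induced by the leaf-descendants of $w$; the base case is given by the singleton leaves with $\omega=1$ (Line \ref{l:omega-leaves}). For an inner vertex $v$ with $t_G(v)\in\{0,1\}$ I would simply invoke the defining property of modular decomposition: if $t_G(v)=1$ then $G[L(\MDT_G(v))]$ is the join of the subgraphs induced by the children of $v$, so $\omega(v)=\sum_w\omega(w)$ (Line \ref{l:v-series}); if $t_G(v)=0$ it is their disjoint union, so $\omega(v)=\max_w\omega(w)$ (Line \ref{l:v-parallel}). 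These two cases are routine.

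The substantial case is $t_G(v)=\mathrm{prime}$, which is handled through the cycle $C=C_M$ of the pvr-network for $M=L(\MDT_G(v))$. I would first establish invariant (A): for every cycle vertex $w\neq\rho_C,\eta$ the algorithm computes $\omega(w)=\omega(G[L(N(w))])$ and $\omegaExclHyb(w)=\omega(G[L(N(w))]-G_\eta)$, where $G_\eta=G[L(N(\eta))]$. By Observation \ref{ref:properties_C_pvr}, such a $w$ has exactly one cycle-child $u'$ and one off-cycle child $u''$ with $L(N(\eta))\subseteq L(N(u'))$ and $L(N(\eta))\cap L(N(u''))=\emptyset$, and $G[L(N(w))]$ is the join (if $t(w)=1$) or the disjoint union (if $t(w)=0$) of $G[L(N(u'))]$ and $G[L(N(u''))]$. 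Since deleting $G_\eta$ affects only the cycle-child, a bottom-up induction along each side $P^i(C)$ yields exactly the recurrences of Lines \ref{l:init-innerC-start}--\ref{l:init-innerC-end}, with the base case for $w\in\parent_N(\eta)$ (Lines \ref{l:base-case-start}--\ref{l:base-case-end}) following from $\omega(\eta)=\omega(G_\eta)$ set in Line \ref{l:eta}.

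Next I would establish invariant (B): after side $i$ is fully processed, $\mathit{clique\_incl\_}L_\eta(i)$ is true precisely when $G_i(M)$ admits a maximum clique meeting $L(N(\eta))$. The idea is that the cycle-branch can carry an $L(N(\eta))$-containing maximum clique upward through every join vertex unchanged, and through a parallel vertex exactly when the cycle-child is not strictly beaten, i.e.\ $\omega(u'')\le\omega(u')$ (reading $u'=\eta$ at a parent of $\eta$); the flag is switched off at the first strict violation (Lines \ref{l:Bool1} and \ref{l:Bool2}) and, being never reset, records the conjunction of these local tests, which is equivalent to (B). At $\rho_C$ the two sides meet (Lines \ref{l:rho1}--\ref{l:rho2}) and, using $G_i(M)=G[L(N(u_i'))]$ for the children $u_i'$ of $\rho_C$, the combination is read off from the earlier propositions: if $t(\rho_C)=0$, Proposition \ref{prop:clique-Number-M-parallel} gives $\omega(v)=\max\{\omega(u'),\omega(u'')\}$ (Line \ref{l:parallel}); if $t(\rho_C)=1$, Proposition \ref{prop:clique-Number-M-series} together with invariant (B) gives $\omega(v)=\omega(u')+\omega(u'')-\omega(\eta)$ when both flags hold (Line \ref{l:series1}) and $\omega(v)=\omegaExclHyb(u')+\omegaExclHyb(u'')$ otherwise (Line \ref{l:series2}). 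I expect invariant (B) to be the main obstacle: one must verify that a single running boolean per side faithfully encodes the existence of an $L(N(\eta))$-meeting maximum clique, which hinges on checking that the strict-inequality test at parallel cycle vertices is exactly the condition forcing every maximum clique off the cycle, that ties must keep the flag true, and that the two resulting flags match the case split of Proposition \ref{prop:clique-Number-M-series}.

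For the running time, computing $(\MDT_G,t_G)$ and the pvr-network $(N,t)$ in Line \ref{l:MDT-pvr} is $O(|V|+|E|)$ by Theorem \ref{thm:GalledTreeExplainable}. The postorder traversal visits each vertex of $\MDT_G$ once; the work at a series or parallel vertex is proportional to its number of children, so summed over all vertices it is $O(|V(\MDT_G)|)=O(|V|)$. Each prime vertex triggers a single bottom-up pass over its cycle $C_M$ with $O(1)$ work per cycle vertex, and since the non-trivial biconnected components of $N$ are edge-disjoint their total size is bounded by $|E(N)|=O(|V(N)|)=O(|V|)$ (using $|V(N)|\le 4|V|-3$). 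Hence the traversal costs $O(|V|)$ and the whole algorithm runs in $O(|V|+|E|)$ time.
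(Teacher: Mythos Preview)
Your proposal is correct and follows essentially the same approach as the paper's own proof: postorder induction on $\MDT_G$, routine series/parallel cases, and for a prime vertex a bottom-up sweep along the two sides of $C_M$ maintaining $\omega(w)$, $\omegaExclHyb(w)$ and the boolean $\mathit{clique\_incl\_}L_\eta(i)$, with the final value at $\rho_C$ justified by Propositions~\ref{prop:clique-Number-M-parallel} and~\ref{prop:clique-Number-M-series}; the running-time argument is likewise the same. Your packaging into explicit invariants (A) and (B) is slightly cleaner than the paper's presentation, and you correctly flag invariant (B) as the crux---just make sure, when you write it out, to include the upward-propagation step (once some parallel cycle vertex forces all maximum cliques off the $\eta$-branch, every ancestor along the side inherits this property through both join and parallel steps), since your one-line ``carry upward'' description currently only spells out the direction where the flag stays true.
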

\begin{proof}
We start with proving the correctness of Algorithm \ref{alg:clique}. Let $G=(V,E)$. 
We first compute $(\MDT_G,t_G)$ and a 
pvr-network $(N,t)$ of $G$ and, for $w\in V(N)$, put $L_w \coloneqq L(N(w))$. 
For a vertex $w\in V(\MDT_G)$, let $M_w\coloneqq L(\MDT_G(w))$ denote 
the module of $G$ ``associated'' with $w$.

We first initialize $\omega(v) = 1$ for all leaves $v$ in $\MDT_G$
and, thus, correctly capture the size $\omega(G[L_v])=\omega(v)$ of a maximum clique in $G[L_v]\simeq K_1$  (Line \ref{l:omega-leaves}). 
We then continue to traverse the remaining vertices in $\MDT_G$ in postorder. This
ensures that whenever we reach a vertex $v$ in $\MDT_G$, all its children have
been processed.  We show now that $\omega(v)\coloneqq \omega(G[M_v])$ is correctly
computed for all $v\in V(\MDT_G)$. 
Let $v$ be the currently processed vertex in Line \ref{l:forV}. 
By induction, we can assume that the children $u$ of $v$ in $\MDT_G$
satisfy  $\omega(u) = \omega(G[M_u])$.

If $t_G(v)=0$, then $M_v$ is a parallel module and,  therefore, 
$G[M_v] = \cupdot_{w\in \child_{\MDT_G}(v)} G[M_w]$. Since 
$G[M_v]$ is the disjoint union of  the graphs
$G[M_w]$ with  $w\in \child_{\MDT_G}(v)$, it follows that every
maximum clique must be located entirely in one of the subgraphs 
$G[M_w]$ of $G[M_v]$. Consequently, 
$\omega(v) \coloneqq \max_{w\in \child_{\MDT_G}(v)} \{\omega(w)\}$
is correctly determined in Line \ref{l:v-parallel}, 
i.e., $\omega(v)=\omega([G[M_v]])$ holds.

If $t_G(v)=1$, then $M_v$ is a series module and,  therefore, 
$G[M_v] = \join_{w\in \child_{\MDT_G}(v)} G[M_w]$. Since 
$M_w\cap M_{w'}=\emptyset$ and all vertices in $M_w$
are adjacent to all vertices in $M_w'$ for distinct children
$w$ and $w'$ of $v$, it follows that a maximum clique in $G[M_v]$
is composed of the maximum cliques in $G[M_w]$, $w\in \child_{\MDT_G}(v)$. 
Consequently, 
$\omega(v) \coloneqq \sum_{w\in \child_{\MDT_G}(v)} \{\omega(w)\}$
is correctly determined in Line \ref{l:v-series}, 
i.e., $\omega(v)=\omega([G[M_v]])$ holds.

Assume now that $t(v)=\mathrm{prime}$ and thus, that $M\coloneqq M_v$ is a prime module of $G$. 
In this case, $v$ is locally replaced by a cycle $C\coloneqq C_{M}$ according to Def.\ \ref{def:pvr}
and  we have $M = L(\MDT_G(v))=L_{\rho_C}$ (cf.\ Obs.\ \ref{ref:properties_C_pvr}).
Out task is now to determined the clique number $\omega(v) \coloneqq \omega(G[M])$ of $G[M]$. 

To this end, we consider the vertice of $C$ in bottom-up fashion
and  start with the unique hybrid-vertex $\eta$ of $C$ in Line \ref{l:eta}. 
By Obs.\ \ref{ref:properties_C_pvr}, $\eta$ has precisely one child $u$ and, therefore, 
$L_\eta=L_u$. Hence, $\omega(\eta)\coloneqq \omega(u) = \omega(G[L_u])$ and, 
since $G[L_u]=G[L_\eta]$, $\omega(\eta) = \omega(G[L_\eta])$ is correctly determined. 

In the following, we will record two values $\omega(w)$ and $\omegaExclHyb(w)$  
for the vertices $w\neq \rho_C,\eta$ in $C$ to capture the size $\omega(w)$ of a maximum
clique in $G[L_w]$ and the size $\omegaExclHyb(w)$ of a maximum
clique in $G[L_w\setminus L_\eta]$. 

In Line \ref{l:base-case-start} - \ref{l:base-case-end} we compute these two
values for the two unique parents of $\eta$, both must be located in $C$ (cf.\
Obs.\ \ref{ref:properties_C_pvr}). Let $w\in \parent_N(\eta)$. By Obs.\
\ref{ref:properties_C_pvr}, $w$ has precisely two children $\eta$ and $u''$ and
it holds that $L_\eta\cap L_{u''}=\emptyset$. 
This, together with the induction assumption and the fact that $u''$ has been
processed, implies that $\omega(u'') = \omega(G[L_u''])$ is correctly computed.
Since $L_w\setminus L_\eta = L_{u''}$, the value $\omegaExclHyb(w) =
\omega(u'')$ correctly determines the size of a largest clique in
$G[L_w\setminus L_\eta]$. To determine $\omega(w)$, we distinguish two cases:
$t(w)=0$ and $t(w)=1$. 
By Obs.\ \ref{ref:properties_C_pvr}, $\lca_N(x,y)=w$ for all $x\in L_{u'}$ and all $y\in L_{u''}$. 
Hence, all $x\in L_{u'}$ and all $y\in L_{u''}$ are either adjacent (case $t(w)=1$) or not adjacent 
(case $t(w)=0$). Now, we can use similar
arguments as for the cases $t_G(v)=1$ and $t_G(v)=0$
to conclude that $\omega(w)$ is correctly computed. 

The Boolean value $\mathit{clique\_incl\_}L_\eta(i)$ is used to record as
whether a maximum clique $G[L_w]$ contains vertices in $L_\eta$ (\texttt{true})
or not (\texttt{false}), $i\in\{1,2\}$. Initially, we put
$\mathit{clique\_incl\_}L_\eta(i) \coloneqq \texttt{true}$ and thus, assume that
if $w\in P^i(C)$, that a maximum clique in $G[L_w]$ can contain vertices in $L_\eta$. We will argue
later that $\mathit{clique\_incl\_}L_\eta(i)$ correctly records as whether there
is a maximum clique in $G_i(M)$ with vertices in $L_\eta$ (\texttt{true}) or if
none of the maximum cliques in $G_i(M)$ contain vertices in $L_\eta$
(\texttt{false}), $i\in\{1,2\}$.

For the parents $w$ of $\eta$, we put $\mathit{clique\_incl\_}L_\eta(i) =
\texttt{false}$ precisely if $w\in P^i(C)$, 
$t(w)=0$ and $\omegaExclHyb(w) = \omega(u'') =
\omega(G[u''])> \omega(\eta) = \omega(G[L_\eta])$. Since $t(w)=0$, there are no
edges between vertices in $L_\eta$ and $L_{u''}$ and, thus, any maximum clique
of $G[L_w]$ must be located entirely in either $G[L_\eta]$ or $G[L_{u''}]$.
Since $\omegaExclHyb(w)> \omega(\eta)$ none of the maximum clique in $G[L_w]$
contains vertices in $L_\eta$ and we correctly put
$\mathit{clique\_incl\_}L_\eta(i) \coloneqq \texttt{false}$. In case $t(w)=1$,
we have $\omega(G[L_w]) = \omega(w) = \omega(\eta) + \omega(u'') =
\omega(G[L_\eta]) + \omega(G[L_{u''}])$. This and the fact that $G[L_w] =
G[L_\eta]\join G[L_{u''}]$ implies that any maximum clique in $G[L_w] $
contains vertices in $L_\eta$. Hence, we keep $\mathit{clique\_incl\_}L_\eta(i)$
as $\texttt{true}$. 

In Line \ref{l:init-innerC-start} - \ref{l:init-innerC-end}, we consider all
vertices $w\in V(C)\setminus\{\eta, \rho_C\}$ and $w\notin\parent_N(\eta)$ in a bottom-up order. Let $w$ be
the currently processed vertex. Again, by Obs.\ \ref{ref:properties_C_pvr}, $w$
has precisely two children $u'$ and $u''$ where $u'$ is located on $C$ while
$u''$ is not. Both vertices $u'$ and $u''$ have already been processed and, by
induction assumption, $\omega(u') = \omega(G[L_{u'}])$, $\omegaExclHyb(u') =
\omega(G[L_{u'}\setminus L_\eta])$, and $\omega(u'') = \omega(G[L_{u''}])$ have
been correctly computed. 

Suppose that $t(w)=0$. Then, we put $\omega(w) = \max\{\omega(u'),\omega(u'')\}$
and by similar argument as used in the previous case ($w$ as a parent of $\eta$), $\omega(w) = \omega(G[L_w])$
is correctly computed. 
 Consider now  $\omegaExclHyb(w)$.
 By Obs.\ \ref{ref:properties_C_pvr}, 
since $\lca_N(x,y) = w$ for all $x\in L_{u'}\setminus L_\eta\subseteq L{u'}$ and $y\in L_{u''}$
and $t(w)=0$, there are no edges between vertices in $G[L_{u'}\setminus L_\eta] $ and $G[L_{u''}]$. 
Hence, $G[L_w\setminus L_\eta] = G[L_{u'}\setminus L_\eta]\cupdot G[L_{u''}]$.
This together with $L_\eta\cap L_{u''}=\emptyset$ implies that
$\omegaExclHyb(w) \coloneqq \omega(G[L_w\setminus L_\eta]) = 
  \max\{\omega(G[L_{u'}\setminus L_\eta]), \omega(G[L_{u''}])\}=
  \max\{\omegaExclHyb(u'), \omega(u'')\}$ 
  is correctly computed. 
  
 Suppose now that  $t(w)=1$. Then, we put $\omega(w) = \omega(u') + \omega(u'')$
and by similar argument as used in the previous case ($w$ as a parent of $\eta$), $\omega(w) = \omega(G[L_w])$
is correctly computed. 
 Consider now  $\omegaExclHyb(w)$.
 Since $\lca_N(x,y) = w$ for all $x\in L_{u'}\setminus L_\eta$ and $y\in L_{u''}$, 
 and $t(w)=1$, all vertices in $G[L_{u'}\setminus L_\eta] $ are adjacent to
 all vertices in $G[L_{u''}]$. Hence, $G[L_w\setminus L_\eta] = G[L_{u'}\setminus L_\eta]\join G[L_{u''}]$
 and therefore, $\omegaExclHyb(w) \coloneqq \omega(G[L_w\setminus L_\eta]) = 
  \omega(G[L_{u'}\setminus L_\eta]) + \omega(G[L_{u''}])=
  \omegaExclHyb(u') + \omega(u'')$ 
  is correctly computed.

 In summary, in Line \ref{l:init-innerC-start} - \ref{l:init-innerC-end} the values
 $\omega(w)=\omega(G[L_w])$ and
 $\omegaExclHyb(w)=\omega(G[L_w\setminus L_\eta])$ have been correctly computed for all
 $w\in V(C)\setminus(\{\eta, \rho_C\}\cup \parent_N(\eta))$ 
  In particular, the two children of 
  $\rho_C$ correspond to the last vertex in $P^1(C)$, resp., $P^2(C)$  
  (in post-order). Let $w$ be one of the children of $\rho_C$. 
  By construction, $G[L_w] = G_i(M)$ for some $i\in \{1,2\}$. 
  Hence, by the previous arguments, $\omega(w) = \omega(G_i(M))$
  and $\omegaExclHyb(w) = \omega(G_i(M)-L_\eta)$ have been correctly
  computed.

 Consider now $G_i(M)$ 
 and let $w$ be the child of $\rho_C$ that is located in $P^i(C)$, $i\in \{1,2\}$. 
 Clearly, if $\omega(w) = \omega(G_i(M)) > \omegaExclHyb(w) = \omega(G_i(M)-L_\eta)$
 then $G_i(M)$ contains a maximum clique with vertices in $L_\eta$. 
 However, $\omega(w) =\omegaExclHyb(w)$ only implies that there is a 
 is a maximum clique with vertices not in $L_\eta$. However, it does not imply
 that there is no maximum clique with vertices in $L_\eta$. 
 To keep track of the latter, we use the $\mathit{clique\_incl\_}L_\eta(i)$. 
 As argued above, $G_1(M)$ is stepwisely composed by the union or join
 of the two children of vertices $w'$ along $P^i(C)$. 
 It remains to show that $\mathit{clique\_incl\_}L_\eta(i)=\texttt{false}$ precisely if 
 $G_1(M)$ does not contain any maximum clique with in $L_\eta$. 
 Let $V(P^i(C))\setminus \{\rho_C,\eta\}= \{w_1,w_2,\dots,w_k\}$
 such that $w_k\prec_N w_{k-1}\prec_N \dots \prec_N w_1 = w$. 
 Moreover, we denote with $u'_{\iota}$ the child of $w_{\iota}$ in $C$ with
  $u''_{\iota}$ the child of $w_{\iota}$ not in $C$.

 Suppose that $\mathit{clique\_incl\_}L_\eta(i)=\texttt{false}$ after we processed
 vertex $w$. In this case, there was a vertex $w_{\iota}$, $1\leq {\iota} \leq k$ such that $t(w_{\iota})=0$ and
 $\omega(u''_{\iota}) > \omega(u'_{\iota})$ (Line \ref{l:Bool1} or \ref{l:Bool2}). As argued above, 
 $G[L_{w_{\iota}}]$ cannot contain a maximum clique with vertices in $L_{u'}$ and since $\eta \prec_N u'$, 
 $G[L_{w_{\iota}}]$ cannot contain a maximum clique with vertices in $L_\eta$.
 Construction of $\omega(w) = \omega(G_i(M))$ involves in each step the values 
 $\omega(w_j)$ and thus, the size of maximum cliques in $G[L_{w_j}]$, $2\leq j\leq k$. 
 Hence, $G_i(M)$ cannot contain a maximum clique with vertices in $L_\eta$. 
 
 Suppose now that none of the maximum cliques in  $G_i(M)$ contain vertices in $L_\eta$. 
 Since $\omega(w) = \omega(G_i(M))$ involves in each step the values 
 $\omega(w_j)$ and thus, the size of maximum cliques in $G[L_{w_j}]$, $2\leq j\leq k$, 
 there must have been one $w_{\iota}$, $1\leq {\iota} \leq k$, such that
 $G[L_{w_{\iota}}]$ does not contain a maximum clique with vertices in $L_\eta$. 
 Let $w_{\iota}$ be the vertex with the largest index ${\iota}$ for which this property
 is satisfied.  Hence, for all $w_j$ with $j<{\iota}$ the subgraph $G[L_{w_j}]$ contain maximum cliques
 with vertices in $L_\eta$. Note that $u'_{\iota}$ corresponds either to $w_{{\iota}+1}$
 or to $\eta$. In the latter case, ${\iota}=k$ and any maximum clique in $G[L_{u'_k}]$ trivially
 contains verices in $L_\eta$. 
  
 If $t(w_{\iota})=1$, then, in all steps, $\omega(w_{\iota}) = \omega(u_{\iota}')+\omega(u_{\iota}'')$
 and, by the latter arguments,  a maximum clique in $G[L_{w_{\iota}}]$
 is composed of maximum cliques in  $G[L_{u'_{\iota}}]$ and $G[L_{u''_{\iota}}]$.
 By choice of $w_{\iota}$ and since $\eta \preceq_N u'_{\iota}$, there is a maximum clique in $G[L_{u'_{\iota}}]$ 
 with vertices in $L_\eta$. 
 Hence, $G[L_{w_{\iota}}]$ contains a maximum clique with vertices in $L_\eta$, a contradiction. 
 Therefore, $t(w_{\iota})=0$ must hold. As argued above, 
 $G[L_{w_{\iota}}] = G[L_{u'_{\iota}}]\cupdot G[L_{u''_{\iota}}]$ and, thus, every maximum clique in
 $G[L_{w_{\iota}}]$ is entirely contained in either $G[L_{u'_{\iota}}]$ or $G[L_{u''_{\iota}}]$. 
 Since, however,  $G[L_{w_{\iota}}]$ does not contain a maximum clique with vertices in $L_\eta$
  and since $\eta \preceq_N u'_{\iota}$ it must hold that
 $\omega(u_{\iota}'')>\omega(u_{\iota}')$ and we correctly set $\mathit{clique\_incl\_}L_\eta(i)=\texttt{false}$. 
  
 In summary,  $\mathit{clique\_incl\_}L_\eta(i)$
 correctly records as whether there
is a maximum clique in $G_i(M)$ with vertices in $L_\eta$ (\texttt{true}) or if
none of the maximum cliques in $G_i(M)$ contain vertices in $L_\eta$
(\texttt{false}), $i\in\{1,2\}$.

 Finally, we compute $\omega(v)$ in Line \ref{l:rho1} - \ref{l:rho2}.
 To recall, $v$ is the vertex in $(\MDT_G,t_G)$ with label $t_G(v)=\mathrm{prime}$. 
 Note that $G[L(\MDT_G(v))] = G[L_{\rho_C}]$ and thus, $\omega(v) = \omega(G[L_{\rho_C}])$. 
 Let $u'$ and $u''$ be the two children of $\rho_C$ (cf.\ Obs.\ \ref{ref:properties_C_pvr})
 and assume w.l.o.g.\ that $u'\in P^1(C)$ and $u''\in P^2(M)$. 
 Note that, by the previous arguments, 
 $\omega(u')=\omega(G_1(M))$, $\omega(u'')=\omega(G_2(M))$,
 $\omegaExclHyb(u') = \omega(G_1(M)-G_\eta)$
 and $\omegaExclHyb(u') = \omega(G_2(M)-G_\eta)$
  have been 
 correctly computed. 
 
 If $t(\rho_C)=0$ we put in Line \ref{l:parallel}, $\omega(v) \coloneqq \max\{\omega(u'), \omega(u'')\}$
 By Prop.\ \ref{prop:clique-Number-M-parallel}, $\omega(v)$ coincides with 
 $\omega(G[L(\MDT_G(v))])$. Otherwise, $t(\rho_C)=1$ must hold. 
 If both Boolean values $\mathit{clique\_incl\_}L_\eta(1)$ and $\mathit{clique\_incl\_}L_\eta(2)$ are 
 \texttt{true}, then there are always maximum cliques in $G_1(M)$ and $G_2(M)$
 with vertices in $L_\eta$. 
 In this case, we put in Line \ref{l:series1}
 $\omega(v) \coloneqq \omega(u') + \omega(u'')-\omega(\eta)$. By Prop \ref{prop:clique-Number-M-series}, 
  $\omega(v)$ coincides with  $\omega(G[L(\MDT_G(v))])$. 
 Otherwise,  at least one $G_1(M)$ and $G_2(M)$ contains no maximum clique with vertices
 in $L_\eta$, i.e., $\mathit{clique\_incl\_}L_\eta(1)$ or $\mathit{clique\_incl\_}L_\eta(2)$ is
 \texttt{false}. 
In this case, we put in Line \ref{l:series2}
 $\omega(v) \coloneqq \omegaExclHyb(u') + \omegaExclHyb(u'')$. 
 By Prop.\ \ref{prop:clique-Number-M-series}, $\omega(v)$ coincides with 
 $\omega(G[L(\MDT_G(v))])$.

 We show now that Algorithm \ref{alg:clique} can be implemented to run in $O(|V|+|E|)$ time 
 for a given  \gatex graph $G=(V,E)$. 
 The modular decomposition tree $(\MDT_G,t_G)$ can be computed in $O(|V|+|E|)$ time \cite{HP:10}. 
 By \cite[Thm.\ 9.4 and Alg.\ 4]{HS:22}, the pvr-network $(N,t)$ of $G$ can be computed within
 the same time complexity. Thus, Line \ref{l:MDT-pvr} takes $O(|V|+|E|)$ time. 
 Initializing  $\omega(v)\coloneqq 1$ for all leaves $v$ (and thus, the vertices of $G$) in 
 Line \ref{l:omega-leaves} can be done in $O(|V|)$ time. 
 
 We then traverse each of the $O(|V|)$ vertices in 
 $(\MDT_G,t_G)$ in postorder. 
 Note first that  the sides $P^1$ and $P^2$ of $C$ can be determined in $O(|V(C)|)$ time
  in Line \ref{l:sides}. 
Moreover, it is easy to verify that all other individual steps
 starting at Line \ref{l:v-prime} that are
 outside and within the two for-loop in Line \ref{l:base-case-start} and \ref{l:init-innerC-start} 
 take constant time. Each of these constant-time steps is computed 
 for every vertex $v\in V(C)$ once. Taken together the latter arguments, for a given prime vertex $v$, 
 Line \ref{l:v-prime} - \ref{l:end-v-prime} have runtime $O(|V(C)|)$. 
 Note that  each cycle $C$ has, by definition of pvr-networks, no vertex in common with 
 every other cycles. For $v\in V(MDT_G)$, let $\lambda_{G}(v)$ denote the size $O(|V_C|)$ of 
 the cycle $C_M$ associated with $M=L(\MDT_G(v))$ in case $M$ is a prime module. 
 Otherwise, i.e., if $M$ is not a prime module, we put  $\lambda_{G}(v)=1$. 
 Hence, the total runtime of Line \ref{l:forV} - \ref{l:forV-end} is 
 $\sum_{v\in V(MDT_G)} \lambda_{G}(v) = O(|V(N)|)$. 
 By \cite[Prop.\ 1]{CRV07}, we have $O(|V(N)|)=O(|V|)$. 
 Hence, the overall time-complexity of Algorithm \ref{alg:clique} is bounded 
 by the time-complexity to compute $(\MDT_G,t_G)$ and $(N,t)$ 
 in  Line \ref{l:MDT-pvr} and is, therefore, $O(|V|+|E|)$ time.
\end{proof}

Since \gatex graphs $G$ are perfect, their chromatic number $\chi(G)$ and the size $\omega(G)$
of a maximum clique coincide, we obtain

\begin{theorem}
The chromatic number $\chi(G)$ of every \gatex graph can be computed in linear-time. 
\end{theorem}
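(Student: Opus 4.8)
The plan is to reduce the computation of $\chi(G)$ to that of $\omega(G)$, which the preceding development has already handled. The key fact is that every \gatex graph is perfect: this was established earlier, since \gatex graphs are weakly-chordal and weakly-chordal graphs are perfect. By the very definition of perfection, for \emph{every} induced subgraph $H$ of $G$ one has $\chi(H)=\omega(H)$; specializing to the case $H=G$ yields the identity $\chi(G)=\omega(G)$. Thus the chromatic number is not merely bounded below in the trivial direction $\chi(G)\geq\omega(G)$, but in fact coincides exactly with the clique number.

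Given this identity, the theorem follows immediately from the result just proved for Algorithm~\ref{alg:clique}, which computes $\omega(G)$ for a \gatex graph $G=(V,E)$ in $O(|V|+|E|)$ time. Running that algorithm and returning its output as the value of $\chi(G)$ therefore determines the chromatic number within the same linear time bound. No new combinatorial analysis is required, since both the correctness and the runtime of the clique-number computation have already been verified; the present statement is simply obtained by composing the equality $\chi(G)=\omega(G)$ with that algorithm. (This re-derives, via the clique route, the coloring-number bound already noted after Theorem~\ref{thm:perfect-order-linT}.)

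I do not anticipate a genuine obstacle here, as the statement asks only for the integer $\chi(G)$ and not for an explicit optimal coloring; were an actual coloring demanded, one would instead invoke the perfectly-orderable structure of Theorem~\ref{thm:perfect-order-linT} together with a linear-time greedy coloring along a perfect order. The sole point warranting a moment's care is to confirm that perfection is being applied to $G$ itself and not only to its proper induced subgraphs: since the definition of a perfect graph quantifies over \emph{all} induced subgraphs of $G$, the choice $H=G$ is admissible and delivers precisely the needed equality $\chi(G)=\omega(G)$.
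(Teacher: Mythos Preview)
Your proposal is correct and matches the paper's own argument essentially verbatim: the paper simply observes that \gatex graphs are perfect, hence $\chi(G)=\omega(G)$, and invokes the linear-time computation of $\omega(G)$ via Algorithm~\ref{alg:clique}. Your additional remarks about the perfectly-orderable route and the distinction between computing $\chi(G)$ versus an explicit coloring are sound but go beyond what the paper records.
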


We consider now the problem of determining the independence number $\alpha(G)$
of \gatex graphs $G$, i.e., a maximum subset $W\subseteq V(G)$ of pairwise
non-adjacent vertices (also known as maximum independent set). 
Suppose that a \gatex graph $G$ is explained by the network $(N,t)$ and let 
$\overline t\colon V(N)\to \{0,1,\dot\}$ where 
$\overline t(v)=\odot$ for all leaves $v$ of $N$ and 
$\overline t(v)=1 $ if and only if $t(v)=0$. 
 By \cite[Prop.\ 1]{CRV07}, we have $O(|V(N)|)=O(|V(G)|)$ and thus, 
 this labeling can be computed in $O(|V(G)|)$ time. 
It is easy to verify that
$(N,\overline t)$ explains the complement $\overline G$ of $G$. 
By Obs.\ \ref{obs:complement-galled-tree}, the complement of every \gatex graph is a \gatex graph as well. 
Since maximum cliques in $\overline G$ are precisely the maximum independent sets
in $G$, we obtain

\begin{theorem}
The independence number $\alpha(G)$ of every \gatex graph can be computed in linear-time. 
\end{theorem}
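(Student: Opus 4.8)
The plan is to reduce the computation of $\alpha(G)$ to a clique-number computation, which we already know to be solvable in linear time. The governing identity is that a subset $W\subseteq V(G)$ is an independent set of $G$ precisely if $W$ induces a clique of the complement $\overline G$; hence $\alpha(G)=\omega(\overline G)$. By Observation~\ref{obs:complement-galled-tree}, $\overline G$ is again \gatex, so Algorithm~\ref{alg:clique} is in principle applicable to it and returns $\omega(\overline G)=\alpha(G)$. The whole content of the theorem therefore lies in the word ``linear-time''.

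The main obstacle is that the naive route --- construct $\overline G$ explicitly and feed it to Algorithm~\ref{alg:clique} --- is \emph{not} linear in the size of $G$, since $\overline G$ may have $\Theta(|V|^2)$ edges even when $G$ is sparse. The plan is to sidestep this by never materialising $\overline G$ at all. First I would compute, in $O(|V|+|E|)$ time, the modular decomposition tree $(\MDT_G,t_G)$ and a pvr-network $(N,t)$ that explains $G$, as guaranteed by Theorem~\ref{thm:GalledTreeExplainable}. Then, exactly as recorded in the paragraph preceding the statement, I would obtain an explanation of $\overline G$ by relabelling only: define $\overline t$ by $\overline t(v)=\odot$ on the leaves and by swapping the binary labels $0\leftrightarrow 1$ on the inner vertices (and likewise $\overline{t_G}$ on $\MDT_G$). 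Since the strong modules of $G$ and $\overline G$ coincide, $(\MDT_G,\overline{t_G})$ is the modular decomposition tree of $\overline G$ and $(N,\overline t)$ is a pvr-network explaining $\overline G$. Because $|V(N)|=O(|V(G)|)$ by \cite[Prop.~1]{CRV07}, this relabelling costs only $O(|V|)$ time.

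Finally I would observe that the core of Algorithm~\ref{alg:clique} --- everything after the construction of the tree and network in Line~\ref{l:MDT-pvr} --- operates purely on the labelled tree and network and runs in $O(|V(N)|)=O(|V|)$ time, without ever inspecting an adjacency. Running this core on $(\MDT_G,\overline{t_G})$ together with $(N,\overline t)$ therefore yields $\omega(\overline G)=\alpha(G)$. Summing the costs gives $O(|V|+|E|)$ to build $(N,t)$ from $G$, plus $O(|V|)$ to flip the labels and to execute the network-only part of Algorithm~\ref{alg:clique}, for a total of $O(|V|+|E|)$, as claimed.
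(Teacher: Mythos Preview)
Your proposal is correct and follows essentially the same route as the paper: flip the $0/1$-labels on the pvr-network $(N,t)$ (and on $\MDT_G$) to obtain an explanation of $\overline G$ in $O(|V|)$ time, then run the network-only part of Algorithm~\ref{alg:clique} to get $\omega(\overline G)=\alpha(G)$, for a total cost of $O(|V|+|E|)$. You are slightly more explicit than the paper in noting that the strong modules of $G$ and $\overline G$ coincide and that the post--Line~\ref{l:MDT-pvr} portion of the algorithm never touches the edge set of the input graph, but the argument is the same.
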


\section{Concluding Remarks}

 In this contribution, we gave two novel characterizations of graphs that
 can be explained by labeled galled-trees (\gatex graphs) in terms of
 induced subgraphs. While Theorem \ref{thm:GatexIFFallPrimitive=PsC} shows
 that a graph is \gatex precisely if all induced primitive subgraphs are
 pseudo-cographs, Theorem \ref{thm:F-free} provides a characterization in terms
 of the set $\forbGT$ of 25 forbidden induced subgraphs. These
 characterizations allowed us to show that \gatex graphs are closely related to
 many other graph classes such as $P_4$-sparse graphs, weakly-chordal graphs,
 perfect graphs with perfect order, comparability and permutation graphs, murky
 graphs as well as interval graphs, Meyniel graphs or very strongly-perfect and
 brittle graphs. The classes of graphs considered here is by-far not exhaustive
 and it would be of interest to investigate the connection of \gatex graphs to
 other graph classes as well.

 A cluster $C(v)$ in a galled-tree $N$ is the set of all leaf-descendants of a
 vertex $v$ and the clustering system $\mathcal C(N)$ of $N$ is the collection of
 all the clusters in $N$. Just recently, clustering systems of galled-trees have
 been characterized \cite{HSS:22cluster}. For cographs $G$ there is a direct
 connection of the clusters in the cotree that explain $G$ and the strong
 modules of $G$ (cf.\ \cite[Sec.\ 3.2]{HSW:16}). In general, the modules of a
 \gatex graph $G$ are not necessarily clusters of the galled-tree that explains
 $G$. Therefore, it is of interest, to understand in more detail, to what extent
 the clusters of a labeled galled-tree that explains $G$ can be directly
 inferred from $G$. In addition, one may ask how the clustering systems of
 galled-trees are related to generalized notions of modules in the graph they
 explain, see e.g.\ \cite{HMEM:21,HMZ:20}.

   Generalizations of galled-trees to more ``refined'' labeled networks that can
 explain a given graph will be an interesting topic for future work to address
 the wide variety of different networks that can be found in phylogenomics
 \cite{HSS:22cluster,huson_rupp_scornavacca_2010}. Such refinements may be
 defined in terms of additional labels to explain edge-colored graphs
 \cite{ER1:90,ER2:90,BD98} or by resolving prime modules in the MDT by more
 ``complicated'' structures than simple cycles. 

\section*{Acknowledgments}
We thank Anna Lindeberg and Nicolas Wieseke for stimulating discussions and comments 
that helped to significantly improve the established results. 
We also would like to thank the organizers of the \textit{18.\ Herbstseminar
der Bioinformatik} hosted in Doubice, Czech Republic,
where the main ideas of this paper were drafted.

\section*{Appendix}

\begin{proof}[Proof of Lemma \ref{lem:PrimWellPC->F}]
 Let $H$ be a primitive graph that is not a pseudo-cograph. Since $H$ is
 primitive, we have $|V(H)|\geq 4$. We proceed now by induction on the number
 $k$ of vertices of $H$. As base-cases, we have $k\in \{4,5,\dots,8\}$ for which
 Obs.\ \ref{obs:exhaustive} implies that $H$ must contain one of the forbidden
 subgraphs in $\forbGT$. Let $|V (H)| \geq 9$ and assume that all primitive
 graphs that are not pseudo-cographs and have less than $|V(H)|$ vertices
 contain an $F\in\forbGT$ as an induced subgraph.

 If $H$ is critical-primitive, then Cor.\ \ref{cor:critical-primitive} implies
 that $H$ is not $\forbGT$-free. Thus, assume that $H$ is not
 critical-primitive. Therefore, there is a vertex $x$ such that $H'= H-x$ is
 primitive. Note that $|V(H')|\geq 8$.
	
 If $H'$ is a not a pseudo-cograph it must, by induction hypothesis, contain a
 forbidden subgraph and so $H$ is not $\forbGT$-free. 
	
 Thus, assume that $H'$ is a pseudo-cograph. Since $H'$ is primitive, Thm.\
 \ref{thm:CharPolCat} implies that $H'$ is a polar-cat. Moreover, since
 $|V(H')|\geq 8$, $H'$ must be well-proportioned, by Obs.\ \ref{obs:well-prop}.
 Hence, by Thm.\ \ref{thm:CharPolCat}, $H'$ has a unique representation as a
 pseudo-cograph $(v,H'_1,H'_2)$. W.l.o.g.\ assume that $H'$ is a slim
 pseudo-cograph (otherwise, take the complement of $H'$; cf.\ Thm.\
 \ref{thm:prop_pc}). Since $H'$ is a slim and primitive pseudo-cograph and
 $H'_1$, $H'_2$ and $v$ are uniquely determined, we can assume that
 $H_Y\coloneqq H[Y] = H'_1$ and $H_Z\coloneqq H[Z]=H'_2$ are induced subgraphs
 of $H-x$ such that $Y\cap Z=\{v\}$, $Y\cup Z = V(H')$ and where the vertices in
 $Y=\{y_1, \ldots, y_{\ell-1}, y_{\ell}=v\}$ and $Z=\{z_1, \ldots, z_{m-1},
 z_{m}=v\}$ are labeled according to Prop.~\ref{prop:polcat-edges} and thus,
 that the edges are of the form as specified in Prop.~\ref{prop:polcat-edges}.
 By definition, $H_Y$ and $H_Z$ are cographs. By assumption, $x$ is adjacent to
 $1,2,3, \dots |V(H')|$ vertices in $H'$ where
 $|V(H')|\geq 8$. For better readability, we denote by $\mathcal N(x)$
 the set of the vertices of $H'$ adjacent to $x$ in $G$. We distinguish now
 the following exhaustive cases: 
	(1) $\mathcal N(x) \subseteq Y$ or $\mathcal N(x) \subseteq Z$, 
	(2) $\mathcal N(x) \not\subseteq Y$, $\mathcal N(x) \not\subseteq Z$ and $v \in \mathcal N(x)$
	and (3) $\mathcal N(x) \not\subseteq Y$, $\mathcal N(x) \not\subseteq Z$ and $v \notin \mathcal N(x)$.

 In what follows, we will make frequent use of Prop~\ref{prop:polcat-edges}(a)
 and the additional argument that there are no edges between vertices in
 $Y-\{v\}$ and $Z-\{v\}$ in $H'$ since $H'$ is a slim pseudo-cograph without
 explicit reference. 

\medskip
\noindent
\emph{Case (1): $\mathcal N(x) \subseteq Y$ or $\mathcal N(x) \subseteq Z$.}

Without loss of generality, we can assume that $\mathcal N(x) \subseteq Y =
\{y_1,\dots,y_\ell=v\}$ and thus, $\mathcal N(x)\cap Z\in \{\emptyset,
\{v\}\}$. Note that $H_Y+x$ cannot be a cograph. To see this, assume for
contradiction that $H_Y+x$ is a cograph. In this case, the fact that $H'$ is a
slim pseudo-cograph and that $x$ is only adjacent to vertices in $Y$ implies
that $H$ is a $(v,H'_Y+x,H_Z)$-pseudo-cograph; a contradiction. Hence, $H_Y+x$
contains an induced $P_4$. Since $H_Y$ is a cograph, this induced $P_4$ must
contain $x$ and is, therefore, either of the form $x-y_i-y_j-y_k$ or
$y_i-x-y_j-y_k$.

\begin{itemize}
\item[(i)] Suppose first that $H$ contains a $P_4$ of the form $x-y_i-y_j-y_k$.
           In this case, $j$ must be odd, since otherwise, both $i$ and $k$ must
           be odd in which case $y_i$ and $y_k$ are adjacent in $H'$, either
           because $i<k$ or $k<i$. Since $j$ is odd and $y_j$ is adjacent to
           $y_i$ and $y_k$ it follows that $j<i,k$ We consider now the subcases
           to distinguish if $v$ is contained in $N(x)$ or not. 

\begin{itemize}
\item[(a)] 
	If $v \in \mathcal N(x)$ and $k$ is odd, then, $i$ must be even and $j < i <
	k$ holds. This and $k\leq \ell$ implies that $v=y_\ell\neq y_i,y_j$.
	Moreover, since $v\in \mathcal N(x)$ but $x$ and $y_k$ are not adjacent, we
	obtain $v\neq y_k$. In this case, $\{x,v,y_i,y_j,y_k,z_1\}$ induces an $F_9
	\sqsubseteq H$.

\item[(b)] 
	If $v \in \mathcal N(x)$ and $k$ is even, then $j < k\leq \ell$ and the fact
	that $j$ is odd while $k$ is even imply that $v,y_j,y_k$ induce a path on
	three vertices in $H$. Consequently, $H$ contains an induced $P_4$
	$x-v-y_j-y_k$. If $|Z| \geq 3$, then $\{x,v,y_j,y_k,z_1,z_2\}$ induces an
	$F_1 \sqsubseteq H$. Suppose now that $|Z|=2$. In this case, $|V(H')|\geq 8$
	and $Y\cap Z=\{v\}$ implies that $|Y| \geq 7$. Then there must exist a
	vertex $y_r \in \mathcal N(x)\setminus \{v\}$ for some $r<|Y|$ as,
	otherwise, $\{x,z_1\}$ would be a module of $H$, contradicting primitivity
	of $H$. If $y_2 \in \mathcal N(x)$, then $\{x,v,y_2, y_j,y_k, z_1\}$ induces
	an $F_1 \sqsubseteq H$ if $j>1$ and an $F_7 \sqsubseteq H$ if $j=1$. If $y_1
	\in \mathcal N(x)$ and $y_2 \notin \mathcal N(x)$, then $y_j\neq y_1$ (since
	$y_j \notin \mathcal N(x)$) and $\{x,v,y_1,y_2,y_j,y_k\}$ induces an $F_{21}
	\sqsubseteq H$. Suppose now that $y_1,y_2 \notin \mathcal N(x)$. In this
	case, $v,y_1,y_2$ induce a path on three vertices in $H$ and thus, $H$
	contains an induced $P_4$ $x-v-y_1-y_2$. If there exists a vertex $y_r \in
	\mathcal N(x)$ for some even $r$, then $\{x,v,y_1,y_2,y_r,z_1\}$ induces an
	$F_7 \sqsubseteq H$. Otherwise, if $y_3 \in \mathcal N(x)$, then
	$\{x,v,y_1,y_2,y_3,y_4,z_1\}$ induces an $F_{19} \sqsubseteq H$. If neither
	of the previous two cases hold then $r\neq 3$ is odd. In particular, $r\neq
	1$ since $y_1 \notin\mathcal N(x)$. In this case,
	$\{x,v,y_1,y_2,y_3,y_4,y_r,z_1\}$ induces an $F_{23} \sqsubseteq H$.

\item[(c)] 
	If $v \notin \mathcal N(x)$ and $i$ is odd, then $\{y_i,y_k\}\notin E(H')$
	implies that $k<i$ and $k$ is even. This and together with $j<i,k$ implies
	$j < k < i$. By similar arguments as in Case (i.a), $v \neq y_i,y_j,y_k$. In
	this case, $\{x,v,y_i,y_j,y_k,z_1\}$ induces an $F_5 \sqsubseteq H$.

\item[(d)] 
	If $v \notin \mathcal N(x)$ and $i$ is even, then $j < i$. By similar
	arguments as in Case (i.b), $H$ contains an induced $P_4$ $x-y_i-y_j-v$. If
	$|Z| \geq 3$, then $\{x,v,y_i,y_j,z_1,z_2\}$ induces an $F_3 \sqsubseteq H$.
	Suppose now that $|Z|=2$ and, therefore, $|Y| \geq 7$. In particular, $Z =
	\{z_1,z_2=v\}$. Suppose first that $y_1 \in \mathcal N(x)$. Since $y_j\notin
	\mathcal N(x)$, we have $y_1\neq y_j$ and $\{x,v,y_1,y_i,y_j,z_1\}$ induces
	an $F_{10} \sqsubseteq H$. If $y_1 \notin \mathcal N(x)$, then $H$ contains
	an induced $P_4$ $x-y_i-y_1-v$ In that case, if there exists an even $r$
	such that $y_r \notin \mathcal N(x)$, then $\{x,v,y_1,y_i,y_r,z_1\}$ induces
	an $F_1 \sqsubseteq H$. If otherwise, $y_r \in \mathcal N(x)$ for all $r$
	even, then if $y_3\notin \mathcal N(x)$, $\{x,v,y_1,y_2,y_3,y_4\}$ induces
	an $F_{12} \sqsubseteq H$, and if $y_3 \in \mathcal N(x)$,
	$\{x,v,y_1,y_3,y_4,z_1\}$ induces an $F_{10} \sqsubseteq H$
\end{itemize}

\item[(ii)] Suppose now that $H$ contains a $P_4$ of the form $y_i-x-y_j-y_k$.
            Assume first, for contradiction, that $i$ is odd. In this case,
            $i>j,k$ must hold, since $y_i$ is neither adjacent to $y_j$ nor to
            $y_k$. However, since $y_j$ and $y_k$ are adjacent, at least one of
            $i$ and $k$ must be odd which together with $i>j,k$ implies that
            $y_i$ is adjacent to $y_j$ or $y_k$; a contradiction. Hence, $i$
            must be even.

\begin{itemize}
\item[(a)] 
	If $v \in \mathcal N(x)$ and $k$ is even, then $j$ must be odd and $i < j <
	k$. The latter and $k\leq \ell$ implies that $v=y_\ell\neq y_i,y_j$.
	Moreover, $v \in \mathcal N(x)$ and $y_k \notin \mathcal N(x)$ implies
	$v\neq y_k$. In this case, $\{x,v,y_i,y_j,y_k,z_1\}$ induces an $F_5
	\sqsubseteq H$.

\item[(b)] 
	If $v \in \mathcal N(x)$ and $k$ is odd, then $i <k$. As in case (ii.a),
	$v\neq y_k$. Since $v=y_\ell$, there is an induced $P_4$ $y_i-x-v-y_k$. If
	$|Z| \geq 3$, then $\{x,v,y_i,y_k,z_1,z_2\}$ induces an $F_1 \sqsubseteq H$.
	Suppose now that $|Z|=2$ in which case $|Y| \geq 7$ must hold. To recap,
	$v=y_\ell\neq y_1$.
	Note that $1\leq i <k$ implies that $y_1\neq y_k$ and, since $i$ is
	even, $y_1\neq y_i$. If $y_1 \notin \mathcal N(x)$, then
	$\{x,v,y_1,y_i,y_k,z_1\}$ induces an $F_9 \sqsubseteq H$. If $y_1 \in
	\mathcal N(x)$, $y_2 \notin \mathcal N(x)$, then $y_2 \neq y_i, y_k$, since
	$y_i \in \mathcal N(x)$ and $k$ is odd, and $\{x,v,y_1,y_2,y_i,y_k,z_1\}$
	induces an $F_{21} \sqsubseteq H$. If $y_1, y_2 \in \mathcal N(x)$, then
	there is, in particular, an induced $P_4$ $y_2-x-v-y_k$ in $H$. In this
	case, if $y_3 \in \mathcal N(x)$, then $\{x,v,y_1,y_2,y_3,y_k,z_1\}$ induces
	an $F_{22} \sqsubseteq H$, and if $y_3 \notin \mathcal N(x)$, then
	$\{x,v,y_2,y_3,y_4,z_1\}$ induces an $F_7 \sqsubseteq H$ if $y_4 \in
	\mathcal N(x)$, and an $F_1 \sqsubseteq H$ if $y_4 \notin \mathcal N(x)$.

\item[(c)] 
	If $v \notin \mathcal N(x)$ and $j$ is even, then $k$ must be odd and, in
	particular, $i < k < j$ and $v \neq y_i,y_j,y_k$ holds by similar arguments
	as in Case (i.a). In this case, $\{x,v,y_i,y_j,y_k,z_1\}$ induces an $F_3
	\sqsubseteq H$.

\item[(d)] 
	If $v \notin \mathcal N(x)$ and $j$ is odd, then $i < j$ and, in particular,
	$j\neq 1$. In this case, there is an induced $P_4$ $y_i-x-y_j-v$ in $H$.
	Since $j\neq 1$, we have $y_j\neq y_1$. Moreover, $v=y_\ell \neq y_1$ holds.
	Therefore, $\{x,v,y_1,y_i,y_j,z_1\}$ induces an $F_{16} \sqsubseteq H$ in
	case $y_1 \notin \mathcal N(x)$, and a $F_{10} \sqsubseteq H$. in case $y_1
	\in \mathcal N(x)$.
\end{itemize}
\end{itemize}

\medskip
\noindent
\emph{Case (2): Case (1) does not hold and $v \in \mathcal N(x)$.} 
	In this case, we have $\mathcal N(x) \cap Y\neq \emptyset$ and $\mathcal
	N(x) \cap Z \neq \emptyset$ since $x$ is adjacent to at least one vertex in
	$Y$ or $Z$ and since Case (1) does not hold.

\begin{itemize}
\item[(i)] Suppose first that one of $|Z|=2$ or $|Y|=2$ holds. W.l.o.g.\ assume
           that $|Y|=2$ in which case $|Z| \geq 7$ must hold. Moreover, since
           $Y=\{y_1,y_2=v=z_m\}$, we must have $y_1 \in \mathcal N(x)$ as,
           otherwise, $\mathcal N(x) \subseteq Z$ and we are in the situation of
           Case (1).

\begin{itemize}
\item[(a)] 
	Suppose that $z_1 \in \mathcal N(x)$. Note, that $\mathcal
	N(x)\setminus\{v,x\} \neq \mathcal{N}(v)\setminus \{x,v\}$ must hold as,
	otherwise, $\{x,v\}$ would be a non-trivial module of $H$, contradicting the
	primitivity of $H$. Since $\mathcal{N}(v)\setminus \{x,v\} = \{y_1\} \cup
	\{z_i\mid 1\leq i<\ell, i \text{ odd}\}$ and $y_1\in \mathcal
	N(x)\setminus\{v,x\}$ and $v=z_m\notin N(x)\setminus\{v,x\}$, there must be
	a vertex $z_i\in \mathcal N(x)$ for some even $i< \ell$ or a vertex
	$z_i\notin \mathcal N(x)$ for some odd $i<\ell$. In particular, such a $z_i$
	satisfies $z_i\neq v$. Let $i$ be the smallest index among all indices in
	$\{1,\dots,m-1\}$ that satisfy $i$ is even and $z_i \in \mathcal N(x)$, or
	$i$ is odd and $z_i \notin \mathcal N(x)$. Assume first that $i$ is odd. In
	this case, $z_1 \in \mathcal N(x)$ implies $i \geq 3$. If $i=3$, then
	$\{x,v,y_1,z_1,z_3,z_4\}$ induces an $F_{8} \sqsubseteq H$ in case $z_4
	\notin \mathcal N(x)$, and an $F_2 \sqsubseteq H$ in case $z_4 \in \mathcal
	N(x)$. If $i>3$ and thus, $z_2\notin \mathcal N(x)$ and $z_3\in \mathcal
	N(x)$, then $\{x,v,y_1,z_1,z_2,z_3,z_i\}$ induces an $F_{22} \sqsubseteq H$.
	Assume now that $i$ is even. In this case, $z_i \in \mathcal N(x)$. If
	$i>2$, then $z_3 \in \mathcal N(x)$ by choice of $i$, and
	$\{x,v,y_1,z_1,z_2,z_3,z_i\}$ induces an $F_{22} \sqsubseteq H$. If $i=2$,
	then let $j \geq 3$ be the smallest index such that $z_j \notin \mathcal
	N(x)$. Note that such an element $j$ must exist, otherwise, $\mathcal
	N(x)=V(H')$ and $V(H')$ would be a non-trivial module of $H$; contradicting
	primitivity of $H$. If $j$ is odd, then $\{x,v,y_1,z_1,z_2,z_j\}$ induces a
	$F_6 \sqsubseteq H$. If $j$ is even, then $j>3$ holds, so $z_3 \in \mathcal
	N(x)$ and $\{x,v,y_1,z_1,z_2,z_3,z_j\}$ induces a $F_{20} \sqsubseteq H$.

\item[(b)] 
	Suppose that $z_1 \notin \mathcal N(x)$. Since $y_1,v\in \mathcal N(x)$ and
	$v=z_m=y_2$, there must be an index $i\in\{1,\dots, \ell-1\}$ such that $z_i
	\in \mathcal N(x)$ as, otherwise, $\mathcal N(x) \subseteq Y$ and we are in
	the situation of Case (1). Let $i$ be the smallest index in $\{1,\dots,
	\ell-1\}$ for which $z_i \in \mathcal N(x)$ holds. Note, $z_i\neq v$. If $i$
	is odd, then $z_2 \notin \mathcal N(x)$, and $\{x,v,y_1,z_1,z_2,z_i\}$
	induces an $F_{10} \sqsubseteq H$. If $i$ is even and $i>2$, then $z_2
	\notin \mathcal N(x)$, and $\{x,v,y_1,z_1,z_2,z_i\}$ induces an $F_{11}
	\sqsubseteq H$. Assume now that $i=2$. If there exists a $j \geq 3$ such
	that $z_j \notin \mathcal N(x)$, then $z_j\neq v$ and
	$\{x,v,y_1,z_1,z_2,z_j\}$ induces an $F_{15} \sqsubseteq H$ if $j$ is odd,
	and an $F_{11} \sqsubseteq H$ if $j$ is even. Finally, if $z_j \in \mathcal
	N(x)$ for all $j \geq 2$, then $\{x,v,y_1,z_1,z_3,z_4\}$ induces an $F_2
	\sqsubseteq H$.
\end{itemize}

\item[(ii)] Suppose now that $|Y|, |Z| \geq 3$. Put $S=\{v,y_1,y_2,z_1,z_2\}$. 
			Since $v\in \mathcal N(x)$, we consider now the $2^4=16$ possible cases 
			that $S \cap \mathcal N(x)$ may satisfy.

\begin{itemize}
\item[(a)] Case $S \cap \mathcal N(x)=\{v\}$. Then $S \cup \{x\}$ induces an
           $F_1 \sqsubseteq H$.

\smallskip
\item[(b)] Case $S \cap \mathcal N(x)=\{v,y_1\}$ or $S \cap \mathcal
           N(x)=\{v,z_1\}$. \smallskip

 W.l.o.g.\ assume that $S \cap \mathcal N(x)=\{v,y_1\}$. In this case, there
 must be an index $i$, $3 \leq i \leq |Z|-1$ such that $z_i \in \mathcal N(x)$,
 since otherwise, we have $\mathcal{N}(x)\subseteq Y$ and we are in the
 situation of Case (1). In this case, $\{x,v,y_1,z_1,z_2, z_i\}$ induces an
 $F_{11} \sqsubseteq H$ if $i$ is even, and an $F_{10} \sqsubseteq H$ if $i$ is
 odd.

\smallskip
\item[(c)] Case $S \cap \mathcal N(x)=\{v,y_2\}$ or $S \cap \mathcal
           N(x)=\{v,z_2\}$. \smallskip

 W.l.o.g.\ assume that $S \cap \mathcal N(x)=\{v,y_2\}$. In this case, there
 must be an index $i$, $3 \leq i \leq |Z|-1$ such that $z_i \in \mathcal N(x)$,
 since otherwise, we have $\mathcal{N}(x)\subseteq Y$ and we are in the
 situation of Case (1). In this case, $\{x,v,y_1,y_2,z_1, z_i\}$ induces an
 $F_{17} \sqsubseteq H$ if $i$ is even, and an $F_{12} \sqsubseteq H$ if $i$ is
 odd.

\smallskip
\item[(d)] Case $S \cap \mathcal N(x)=\{v,y_2,z_2\}$, then $S \cup \{x\}$
           induces an $F_{17} \sqsubseteq H$.

\smallskip
\item[(e)] Case $S \cap \mathcal N(x)=\{v,y_1,z_2\}$ or $S \cap \mathcal
           N(x)=\{v,y_2,z_1\}$, then $S \cup \{x\}$ induces an $F_{16}
           \sqsubseteq H$.

\smallskip
\item[(f)] Case $S \cap \mathcal N(x)=\{v,y_1,y_2\}$ or $S \cap \mathcal
           N(x)=\{v,z_1,z_2\}$. \smallskip

 W.l.o.g.\ assume that $S \cap \mathcal N(x)=\{v,y_1,y_2\}$. In this case, there
 must be an index $i$, $3 \leq i \leq |Z|-1$ such that $z_i \in \mathcal N(x)$,
 since otherwise, we have $\mathcal{N}(x)\subseteq Y$ and we are in the
 situation of case (1). In this case, $\{x,v,y_1,z_1,z_2, z_i\}$ induces an
 $F_{11} \sqsubseteq H$ if $i$ is even, and an $F_{10} \sqsubseteq H$ if $i$ is
 odd. 

\smallskip
\item[(g)] Case  $S \cap \mathcal N(x)=\{v,y_1,z_1\}$. \smallskip

 Since $\{x,v\}$ cannot form a module in $H$, we can reuse the arguments as in
 Case (2.i.a) and conclude that there must be a vertex $z_i\in \mathcal N(x)$
 for some even $i< m$ or a vertex $y_i\in \mathcal N(x)$ for some even $i< \ell$
 or a vertex $z_i\notin \mathcal N(x)$ for some odd $i<m$ or $y_i\notin \mathcal
 N(x)$ for some odd $i<\ell$. If $y_i\in \mathcal N(x)$ and $i<\ell$ is even or
 if $y_i\notin \mathcal N(x)$ and $i<\ell$ is odd, then
 $\{x,v,y_1,y_i,z_1,z_2\}$ induces an $F_{10} \sqsubseteq H$. By similar
 arguments, one finds an $F_{10} \sqsubseteq H$ in case $z_i\in \mathcal N(x)$
 and $i< m$ even or $z_i\notin \mathcal N(x)$ and $i<m$ odd.

\smallskip
\item[(h)] Case $S \cap \mathcal N(x)=\{v,y_1,y_2,z_2\}$ or $S \cap \mathcal
           N(x)=\{y_2,z_1, z_2\}$. Then, $S \cup \{x\}$ induces an $F_{12}
           \sqsubseteq H$.

\smallskip
\item[(i)] Case $S \cap \mathcal N(x)=\{v,y_1,y_2,z_1\}$ or $S \cap \mathcal
           N(x)=\{v,y_1,z_1,z_2\}$. Then, $S \cup \{x\}$ induces an $F_{10}
           \sqsubseteq H$.

\smallskip
\item[(j)] Case $S \cap \mathcal N(x) = S$. \smallskip

	If $\mathcal N(x)=V(H)\setminus\{x\}=V(H')$, then $\mathcal N(x)$ would form
	a non-trivial module in $H$, which is not possible since $H$ is primitive.
	Hence, there must be at least one vertex $w \in V(H')$ such that $w \notin
	\mathcal N(x)$. Without loss of generality, we may assume that $w=y_i$ for
	some $3 \leq i \leq |Y|-1$. Then, $\{x,v,y_1,y_i,z_1,z_2\}$ induces an
	$F_{10} \sqsubseteq H$ if $i$ is even, and an $F_8 \sqsubseteq H$ if $i$ is
	odd. 	
\end{itemize}
\end{itemize}

\medskip
\noindent
\emph{Case (3): Case (1) does not hold and $v \notin \mathcal N(x)$.}

\begin{itemize}
\item[(i)]  Suppose first that one of $|Z|=2$ or $|Y|=2$ holds. W.l.o.g.\ assume that $|Y|=2$. 
			In this case, $|Z| \geq 7$ must hold. As argued in Case (2.i), we have $y_1 \in \mathcal N(x)$.
			Note that $v=y_2=z_m$.

\begin{itemize}
\item[(a)] 
	If $z_1 \in \mathcal N(x)$, we have to distinguish between three cases.
	Suppose first that $z_3 \notin \mathcal N(x)$. In this case,
	$\{x,v,y_1,z_1,z_2,z_3\}$ induces an $F_{15} \sqsubseteq H$ if $z_2 \in
	\mathcal N(x)$, and a $F_9 \sqsubseteq H$ if $z_2 \notin \mathcal N(x)$.
	Thus, assume that $z_3 \in \mathcal N(x)$. If $z_2\in \mathcal N(x)$, then
	$\{x,v,y_1,z_2,z_3,z_4\}$ induces an $F_9 \sqsubseteq H$ if $z_4 \in
	\mathcal N(x)$, and a $F_7 \sqsubseteq H$ if $z_4 \notin \mathcal N(x)$.
	Suppose that $z_2\notin \mathcal N(v)$. Since $\{x,v\}$ cannot form a module
	in $H'$ and, in addition, $v=z_m\notin \mathcal N(x)$ and $y_1,z_1,z_3 \in
	\mathcal N(x)\cap \mathcal N(v)$, there must be a vertex $z_i$ with $i\in
	\{4,5,\dots,m-1\}$ such that $z_i\notin \mathcal N(v)$ but $z_i\in \mathcal
	N(x)$ or $z_i\in \mathcal N(v)$ but $z_i\notin \mathcal N(x)$. In either
	case, $\{x,v,y_1,z_1,z_2,z_i\}$ induces an $F_9 \sqsubseteq H$.

\item[(b)] 
	If $z_1 \notin \mathcal N(x)$, then there exists a vertex $z_i \in \mathcal
	N(x)$ for some $i \in \{2,\dots,m-1\}$, otherwise we are again in situation
	of Case (1). If there exists such an even index $i$ with $z_i \in \mathcal
	N(x)$, then $\{x,v,y_1,z_1,z_i\}$ induces an $F_0 \sqsubseteq H$. If
	otherwise, $z_j \notin \mathcal N(x)$ for all $j$ even, then $z_2 \notin
	\mathcal N(x)$ and there is an odd index $i$ with $z_i \in \mathcal N(x)$
	and $z_i\neq v$. In this case, $\{x,v,y_1,z_1,z_2,z_i\}$ induces an $F_{16}
	\sqsubseteq H$.
\end{itemize}

\item[(ii)] Suppose now that $|Y|, |Z| \geq 3$.

\begin{itemize}
\item[(a)] 
	Suppose that $y_1,z_1 \in \mathcal N(x)$. In this case,
	$\{x,v,y_1,y_2,z_1,z_2\}$ induces an $F_{15} \sqsubseteq H$ if $y_2,z_2 \in
	\mathcal N(x)$, and an $F_{11} \sqsubseteq H$ if exactly one of $y_2,z_2$ is
	in $\mathcal N(x)$. Assume now that $y_2, z_2 \notin \mathcal N(x)$. Similar
	as in Case (2.i.a) we can conclude that there must be a vertex $z_i\in
	\mathcal N(x)$ for some even $i< m$ or a vertex $y_i\in \mathcal N(x)$ for
	some even $i< \ell$ or a vertex $z_i\notin \mathcal N(x)$ for some odd $i<m$
	or $y_i\notin \mathcal N(x)$ for some odd $i<\ell$. If there is an odd index
	$i$ and $y_i \notin \mathcal N(x)$, or an even index $i$ and $y_i \in
	\mathcal N(x)$, then $\{x,v,y_1,y_i,z_1,z_2\}$ induces an $F_{11}
	\sqsubseteq H$. By similar reasoning, there is an induced $F_{11}$ in the
	case for such an index $i$ and vertex $z_i$.

\item[(b)] 
	Suppose that exactly one of $y_1,z_1$ is in $\mathcal N(x)$. W.l.o.g.\
	assume that $y_1\in \mathcal N(x)$ and, thus, $z_1 \notin \mathcal
	N(x)$. Then, there exists an index $i$ with $2 \leq i \leq |Z|-1$ such
	that $z_i \in \mathcal N(x)$, otherwise we are in the situation of Case (1).
	If there exists such an even index $i$, then $\{x,v,y_1,z_1,z_i\}$ induces
	an $F_0 \sqsubseteq H$. If otherwise, $z_j \notin \mathcal N(x)$ for all $j$
	even, then $z_2 \notin \mathcal N(x)$ and there is an odd index $i\neq
	1,|Z|$ with $z_i \in \mathcal N(x)$. In this case, 
	$\{x,v,y_1,z_1,z_2,z_i\}$ induces an $F_{16} \sqsubseteq H$.

\item[(c)] 
	Suppose that $y_1,z_1 \notin \mathcal N(x)$. Then there must exists $2 \leq
	i \leq |Y|-1$ such that $y_i \in \mathcal N(x)$, otherwise we are in the
	situation of case (1). If there exists such an even index $i$, then
	$\{x,v,y_1,y_i,z_1,z_2\}$ induces an $F_3 \sqsubseteq H$ if $z_2 \notin
	\mathcal N(x)$, and an $F_{13} \sqsubseteq H$ if $z_2 \in \mathcal N(x)$. If
	otherwise, $y_j \notin \mathcal N(x)$ for all $j$ even, then there exists an
	odd index $i\neq 1$ such that $y_i \in \mathcal N(x)$,
	and $\{x,v,y_1,y_2,y_i,z_1\}$ induces an $F_5 \sqsubseteq H$.
\end{itemize}
\end{itemize}
\end{proof}

\bibliographystyle{plain}
\bibliography{pc}

\end{document}